\theoremstyle{definition}
\theoremstyle{plain}
\newtheorem{theorem}{Theorem}[section]
\newtheorem{definition}{Definition}[section]
\newtheorem{remark}{Remark}[section]
\newtheorem{lemma}{Lemma}[section]
\newtheorem{proposition}{Proposition}[section]
\newtheorem{corollary}{Corollary}[section]
\numberwithin{equation}{section}
\newcommand{\vs}{\vspace}
\begin{document}

\title{Global well-posedness, scattering and blow-up for
the energy-critical, Schr\"odinger
equation with general nonlinearity in the radial case\footnote{   This work was partially supported by NNSFC (No. 12171493).}}

\author{ Jun Wang$^{a}$, Zhaoyang Yin$^{a, b}$\footnote {Corresponding author. wangj937@mail2.sysu.edu.cn (J. Wang), mcsyzy@mail.sysu.edu.cn (Z. Yin)
} \\
{\small $^{a}$Department of Mathematics, Sun Yat-sen University, Guangzhou, 510275, China } \\
{\small $^{b}$School of Science, Shenzhen Campus of Sun Yat-sen University, Shenzhen, 518107, China } \\
}

	\date{}

	\maketitle

\date{}

 \maketitle \vs{-.7cm}

  \begin{abstract}
In this paper, we
study the well-posedness theory and the scattering asymptotics for the energy-critical, Schr\"odinger equation with general nonlinearity
\begin{equation*}
  \left\{\begin{array}{l}
i \partial_t u+\Delta u + f(u)=0,\ (x, t) \in \mathbb{R}^N \times \mathbb{R}, \\
\left.u\right|_{t=0}=u_0 \in  H ^1(\mathbb{R}^N),
\end{array}\right.
\end{equation*}
where $f:\mathbb{C}\rightarrow \mathbb{C}$ satisfies Sobolev critical growth condition. Using contraction mapping method and concentration compactness argument, we obtain
the well-posedness theory in proper function spaces and scattering asymptotics. This paper generalizes the conclusions in \cite{KCEMF2006}(Invent. Math).
\end{abstract}

{\footnotesize {\bf   Keywords:} Schr\"odinger equation, General nonlinearity, Well-posedness, Scattering

{\bf 2010 MSC:}  35Q55, 35R11, 37K05, 37L50
}

\section{ Introduction and main results}

This paper studies the well-posedness theory and the scattering asymptotics for the energy-critical, focusing, Schr\"odinger equation with general nonlinearity
\begin{equation}\label{eq1.1}
  \left\{\begin{array}{l}
i \partial_t u+\Delta u + f(u)=0,\ (x, t) \in \mathbb{R}^N \times \mathbb{R}, \\
\left.u\right|_{t=0}=u_0 \in  H ^1(\mathbb{R}^N),
\end{array}\right.
\end{equation}
where $f:\mathbb{C}\rightarrow \mathbb{C}$ satisfies Sobolev critical growth condition.

In \cite{KCEMF2006}, Kenig and Merle studied the $\dot{H}^1$ critical non-linear Schr\"odinger equation
$$
\left\{\begin{array}{l}
i \partial_t u+\Delta u \pm|u|^{\frac{4}{N-2}} u=0, \quad(x, t) \in \mathbb{R}^N \times \mathbb{R}, \\
\left.u\right|_{t=0}=u_0 \in \dot{H}^1(\mathbb{R}^N) .
\end{array}\right.
$$
Here the $-$ sign corresponds to the defocusing problem, while the $+$ sign corresponds to the focusing problem. They obtained  the global well-posedness, scattering and blow-up results in the radial case and $3\leq N\leq5$. Recently, Oh and Wang in \cite{TOYW2020} considered global well-posedness for one-dimensional cubic nonlinear Schr\"odinger equation by introducing a new function space. For $N=2$, \cite{JBAB2014} established global well-posedness results in the defocusing case, posed on the two-dimensional unit ball. For high dimensions case, Tao et.al in \cite{TTMV2007} established global well-posedness and scattering for solutions to the defocusing mass-critical nonlinear Schr\"odinger equation. For more well-posedness results, please refer to \cite{{XYHY2024},{DHNI2024},{HBGP2022},{JB1999},{JBM1999},{TCS2003},{TC1990}}.

Unlike other articles on well-posedness and scattering, this paper presents for the first time results on general critical nonlinear terms. This idea mainly comes from the variational method. In recent decades, many researchers have been interested in semi-linear Schr\"odinger problems with general nonlinear terms. From a mathematical perspective, it is very meaningful. In order to obtain the solution of the equation, it is necessary to impose appropriate conditions on the nonlinear term. The most classic condition is the Ambrosetti-Rabinowitz(AR) condition:
\begin{equation*}
\text {There exists } \mu>2 \text { such that } G(t)  \geq \mu g(t)  >0 \text { for all } t>0 \text {, }
\end{equation*}
where $G(t)=\int_{0}^tg(s)ds$. As is well known, (AR) condition plays a crucial role in getting a bounded (PS)-sequence, so we can use the Nehari manifold method to obtain the solution of the equation. As we can see, the (AR) condition ensures that nonlinearity has good properties. However, we use another weaker Berestycki-Lions(BL) condition than (AR) in this paper, namely
\begin{equation*}
\text {There exists } \xi>0 \text { such that } G(\xi)  >0 .
\end{equation*}
As mentioned in \cite{HB1983}, under the (BL) condition, we can use Pohozaev manifold to obtain the existence of standing wave solutions(even the existence of infinite solutions), which is crucial for obtaining variational estimates in Section 6.

Now, we give the most general assumption of growth, i.e., assume that $f: \ \mathbb{C}\rightarrow \mathbb{C}$ is $C^1$ and satisfies:
\begin{itemize}
\item[$(F_1)$]\ $\lim\limits_{|z| \rightarrow 0^{+}} \frac{|f(z)\overline{z}|}{|z|^2 }=0$.

\item[$(F_2)$]\ $\limsup\limits_{|z| \rightarrow \infty} \frac{|f(z)\overline{z}|}{|z|^{2^*}}<\infty$, where $2^*=\frac{2N}{N-2}$.

\item[$(F_3)$]\ $\lim\limits_{|\nabla z| \rightarrow 0^{+}} \frac{|\nabla (f(z))|}{|\nabla z| }=0$.

\item[$(F_4)$]\ $\limsup\limits_{|\nabla z| \rightarrow \infty} \frac{|\nabla (f(z))|}{|z|^{2^*-2}||\nabla z|}<\infty$.
\end{itemize}
\begin{remark}\label{r1.1}
By $(F_1)$, $(F_2)$, $(F_3)$, $(F_4)$, there exist $\varepsilon,\ C>0$ such that
\begin{equation*}
\frac{|f(z)\overline{z}|}{|z|^2 }<\varepsilon,\ \frac{|\nabla (f(z))|}{|\nabla z| }<\varepsilon
\end{equation*}
and
\begin{equation*}
\frac{|f(z)\overline{z}|}{|z|^{2^*}}<C,\ \frac{|\nabla (f(z))|}{|z|^{2^*-2}||\nabla z|}<C.
\end{equation*}
Hence, it follows from $f\in C^1$ that there exist $C_1,\ C_2,\ C_3,\ C_4>0$ such that
\begin{equation*}
  |f(z)|\leq C_1|z|+C_2|z|^{2^*-1},\  |\nabla (f(z))|\leq C_3|\nabla z|+C_4|z|^{2^*-2}|\nabla z|.
\end{equation*}
\end{remark}
\begin{remark}\label{r1.2}
Let $f(u)=|u|^{\frac{4}{N-2}} u+|u|^{\frac{2}{N-2}} u$, then clearly
\begin{equation*}
  |f(u)| \leq|u|^{\frac{N+2}{N-2}}+|u|^{\frac{N}{N-2}}, \left|\partial_z f(u)\right| \leq C|u|^{\frac{4}{N-2}}+|u|^{\frac{2}{N-2}},\left|\partial_{\bar{z}} f(u)\right| \leq C|u|^{\frac{4}{N-2}}+|u|^{\frac{2}{N-2}}.
\end{equation*}
Moreover, for $3 \leq N \leq 4$,
$$
\left.\begin{array}{l}
\left|\partial_z f(u)-\partial_z f(v)\right| \\
\left|\partial_{\bar{z}} f(u)-\partial_{\bar{z}} f(v)\right|
\end{array}\right\} \leq C|u-v| \cdot\left\{|u|^{\frac{6-N}{N-2}}+|v|^{\frac{6-N}{N-2}}+|u|^{\frac{4-N}{N-2}}+|v|^{\frac{4-N}{N-2}}\right\} .
$$
Also,
\begin{equation*}
  |f(u)-f(v)| \leq|u-v|\left\{|u|^{\frac{4}{N-2}}+|v|^{\frac{4}{N-2}}+|u|^{\frac{2}{N-2}}+|v|^{\frac{2}{N-2}}\right\}.
\end{equation*}
In addition,
\begin{eqnarray*}
&&\nabla_x(f(u(x)))-\nabla_x(f(v(x)))\\
&= & (\nabla f)(u(x)) \nabla u-(\nabla f)(v(x)) \nabla v \\
&= & (\nabla f)(u(x)) \nabla u-(\nabla f)(u(x)) \nabla v  +\{\nabla f(u(x)))-\nabla f(v(x))\} \nabla v,
\end{eqnarray*}
so
\begin{eqnarray*}
 \left|\nabla_x f(u(x))-\nabla_x f(v(x))\right| &\leq& C|u|^{\frac{4}{N-2}}|\nabla u-\nabla v|+C|\nabla v|\left\{|u|^{\frac{6-N}{N-2}}+ |v|^{\frac{6-N}{N-2}}\right\}|u-v|\\
 & &+ C|u|^{\frac{2}{N-2}}|\nabla u-\nabla v|+C|\nabla v|\left\{|u|^{\frac{6-N}{N-2}}+ |v|^{\frac{4-N}{N-2}}\right\}|u-v|.
\end{eqnarray*}
\end{remark}

In the subsequent proof, energy conservation is necessary. If the nonlinear term is of power type, there is obviously energy conservation. This paper considers more general nonlinear terms. Inspired by \cite{JGGV1979}, we need to make the following new assumption:
\begin{itemize}
\item[$(F_5)$]\ For all $z \in \mathbb{C}, f(\bar{z})=\overline{f(z)}$. For all $z \in \mathbb{C}$ and all $\omega \in \mathbb{C}$ with $|\omega|=1$, $f(\omega z)=\omega f(z)$.
\end{itemize}
It follows from $(F_5)$ that $f(z)$ is of the form $f(z)=z \tilde{f}(|z|)$ where $\tilde{f}$ is a real function defined on $(0, \infty)$. Moreover, note that $f$ is continuous, one can define a function $F$ from $\mathbb{C}$ to $\mathbb{R}$ by
\begin{eqnarray*}
F(\rho):=2 \int_0^\rho \sigma \tilde{f}(\sigma) d \sigma && \text { for all } \rho \geq 0, \\
F(z):=F(|z|) && \text { for all } z \in \mathbb{C} .
\end{eqnarray*}
The function $f$ can be recovered from $F$ by the relation
$$
f(z)=\frac{\partial F(z)}{\partial \bar{z}} .
$$
Hence, by using  Propositions 3.3 and 3.4 in \cite{JGGV1979}, we can obtain the conservation of the $L^2$-norm and of the energy in differential form for the regularized equation. That is, if the solution $u$ of \eqref{eq1.1} has sufficient decay at infinity and smoothness, it satisfies the conservation of mass
\begin{equation}\label{eq1.2}
  M(u(t)):=\|u(t)\|_{L^2}=\|u_0\|_{L^2}
\end{equation}
and the conservation of energy
\begin{equation}\label{eq1.3}
  E(u(t))=E(u_0),
\end{equation}
where $E(u(t))$ is defined by
\begin{equation*}
  E(u(t)):=  \frac{1}{2} \int_{\mathbb{R}^N}\left|\nabla u(t, x)\right|^2 d x-   \int_{\mathbb{R}^N}F(u(t))  d x,\ F(u)=F(|u|)=2\int_{0}^{|u|}s\tilde{f}(s)ds
\end{equation*}
and the energy space is $H^1$.

The main results of this paper are as follows.
\begin{theorem}\label{t1.1}
 Assume $u_0 \in H_{rad}^1(\mathbb{R}^N)$,\ $N\geq3$ and $(F_1)-(F_5)$ hold, then there exists a unique solution $u$ to \eqref{eq1.1} satisfying
 \begin{equation*}
    u \in C([0,T] , H_{rad}^1(\mathbb{R}^N))\cap L_{loc}^{\frac{2(N+2)}{N-2}}([0,T],W^{1,{\frac{2N(N+2)}{N^2+4}}}).
 \end{equation*}
Moreover, there exists the globally solution $u$ to \eqref{eq1.1}  if  $\|u_0\|_{L^2}$ is small enough.
\end{theorem}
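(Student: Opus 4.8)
The plan is to prove Theorem~\ref{t1.1} by a fixed-point (contraction mapping) argument applied to the Duhamel formulation
\begin{equation*}
  \Phi(u)(t) = e^{it\Delta}u_0 + i\int_0^t e^{i(t-s)\Delta} f(u(s))\,ds,
\end{equation*}
carried out in the complete metric space
\begin{equation*}
  X_T = \Bigl\{ u \in C([0,T],H^1_{rad}) \cap L^{\frac{2(N+2)}{N-2}}_{loc}([0,T],W^{1,\frac{2N(N+2)}{N^2+4}}) : \|u\|_{X_T}\leq R \Bigr\},
\end{equation*}
with the metric induced by the Strichartz norm (using the lower-regularity distance $\|u-v\|_{L^q_tL^r_x}$ so that the $C^1$ nonlinearity estimates of Remark~\ref{r1.2} apply cleanly). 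The first step is to fix the admissible Strichartz exponent pair: the exponents in the statement, $q=\frac{2(N+2)}{N-2}$ and $r=\frac{2N(N+2)}{N^2+4}$, are $\dot H^1$-admissible in the sense that $(q,r)$ is Schrödinger-admissible and $W^{1,r}\hookrightarrow$ the critical Lebesgue space; I would record the relevant homogeneous and inhomogeneous Strichartz estimates
\begin{equation*}
  \|e^{it\Delta}u_0\|_{L^q_TW^{1,r}_x} \lesssim \|u_0\|_{H^1}, \qquad
  \Bigl\|\int_0^t e^{i(t-s)\Delta}F(s)\,ds\Bigr\|_{C_TH^1 \cap L^q_TW^{1,r}_x} \lesssim \|F\|_{L^{q'}_TW^{1,r'}_x}.
\end{equation*}

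The second step is the nonlinear estimate: using Remark~\ref{r1.1} ($|f(z)|\le C_1|z|+C_2|z|^{2^*-1}$ and the matching bound for $|\nabla f(z)|$) together with Hölder and Sobolev embedding, I would show
\begin{equation*}
  \|f(u)\|_{L^{q'}_TW^{1,r'}_x} \lesssim T^{\theta}\|u\|_{L^\infty_TH^1} + \|u\|_{L^q_TW^{1,r}_x}^{2^*-1},
\end{equation*}
for some $\theta>0$ coming from the subcritical (linear-in-$z$) part of $f$; the term $C_1|z|$ is harmless because on a bounded time interval it contributes a factor with a positive power of $T$, while the critical term $C_2|z|^{2^*-1}$ is controlled purely by the Strichartz norm with no gain in $T$. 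The analogous difference estimate, via the Lipschitz-type bounds on $\partial_z f$, $\partial_{\bar z}f$ and $\nabla f$ recorded in Remark~\ref{r1.2} (this is the place where the restriction $3\le N\le 4$ in that remark, or an appropriate substitute for $N\ge 5$, is used — more on the obstacle below), yields
\begin{equation*}
  \|f(u)-f(v)\|_{L^{q'}_TW^{1,r'}_x} \lesssim \bigl(T^{\theta} + \|u\|_{X_T}^{2^*-2}+\|v\|_{X_T}^{2^*-2}\bigr)\|u-v\|_{X_T}.
\end{equation*}

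Combining these, $\Phi$ maps $X_T$ to itself and is a contraction provided first $R$ is taken comparable to $\|u_0\|_{H^1}$ in the homogeneous Strichartz bound and then $T$ (hence the local Strichartz norm of $e^{it\Delta}u_0$, which is small for $T$ small by dominated convergence) is chosen small enough that the critical terms $R^{2^*-2}$ and the subcritical $T^\theta$ terms are both $\le 1/2$; this gives the unique local solution in $X_T$. Uniqueness in the full space $C_TH^1\cap L^q_TW^{1,r}$ follows by the same difference estimate on a possibly shorter interval and a continuity/connectedness argument. For the global statement under small $\|u_0\|_{L^2}$: here I would use the conservation laws \eqref{eq1.2}--\eqref{eq1.3}. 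Mass conservation keeps $\|u(t)\|_{L^2}$ small for all time; combined with energy conservation and the bound $\int F(u)\lesssim \varepsilon\|u\|_{L^2}^2 + C\|u\|_{L^{2^*}}^{2^*} \lesssim \varepsilon\|u\|_{L^2}^2 + C\|\nabla u\|_{L^2}^{2^*}$ from Remark~\ref{r1.1}, a continuity/bootstrap argument (the map $t\mapsto \|\nabla u(t)\|_{L^2}$ cannot cross the first positive root of $\tfrac12 s^2 - C s^{2^*} = E(u_0)$) shows $\|\nabla u(t)\|_{L^2}$ stays uniformly bounded; then the local existence time depends only on $\|u(t)\|_{H^1}$, so the solution extends to all of $\mathbb{R}$ and its Strichartz norm is finite on each compact interval.

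The main obstacle is the differentiated nonlinear (difference) estimate $\|\nabla(f(u)-f(v))\|_{L^{q'}_TL^{r'}_x}$, because $\nabla f(u)$ is only Hölder — not Lipschitz — of order $2^*-2 = \tfrac{6-N}{N-2}$ in $u$, and this exponent is $<1$ precisely when $N>4$. Remark~\ref{r1.2} is stated only for $3\le N\le 4$ for exactly this reason. To handle general $N\ge 3$ one must either (i) measure the contraction in a weaker norm (e.g. an $L^q_tL^r_x$ norm without the derivative, or the $\dot H^{s_c}$ Strichartz norm), so that only the \emph{undifferentiated} difference estimate $|f(u)-f(v)|\lesssim |u-v|(|u|^{2^*-2}+|v|^{2^*-2})$ — which is fine for all $N\ge3$ — is needed, while the $W^{1,r}$ bound is closed only at the level of mapping $X_T$ into itself (not for the contraction); or (ii) impose on $f$ a Hölder-continuity hypothesis on $\nabla f$ compatible with $2^*-2$ and accept fractional powers of $\|u-v\|$ in the estimate, recovering contraction by a standard iteration. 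I would adopt route (i): run the contraction in the metric $d(u,v)=\|u-v\|_{L^q_TL^r_x}$, verify completeness of $(X_T,d)$ (Strichartz-bounded, $H^1$-weakly-closed balls are complete in the weaker metric), and separately verify the a priori $W^{1,r}$ regularity of the fixed point by a single extra application of the (non-difference) nonlinear estimate. This keeps all nonlinear estimates within the range where Remark~\ref{r1.1} alone suffices and isolates the only genuinely delicate point of the argument.
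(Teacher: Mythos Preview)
Your proposal is correct and follows essentially the same route as the paper: both run the contraction mapping on the Duhamel formulation in the ball defined by the $L^\infty_T H^1 \cap L^q_T W^{1,r}$ norm, but measure the contraction in the weaker undifferentiated distance (the paper uses $d_B(u,v)=\|u-v\|_{L^\infty_T L^2}+\|u-v\|_{L^q_T L^r}$, you propose $\|u-v\|_{L^q_T L^r}$), so that only the bound $|f(u)-f(v)|\lesssim |u-v|(|u|^{2^*-2}+|v|^{2^*-2}+1)$ is needed for the difference; the global statement is then obtained in both cases from the conservation laws and a uniform $H^1$ bound followed by iteration of the local theory. Your discussion of the $N>4$ obstacle and the choice of route~(i) is in fact more explicit than the paper's, which adopts the weak-metric contraction without comment.
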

\begin{remark}\label{r1.3}
The conclusion of Theorem \ref{t1.1} also holds for defocusing cases, please refer to the proof of Lemma \ref{L3.2} for details.
\end{remark}
\begin{theorem}\label{t1.2}
Assume that $N \geq 3$ and $u_0 \in H_{\text {rad }}^1$ and $(F_1)-(F_5)$ hold. Let $u$ be such that the corresponding solution to \eqref{eq1.1} exists on the maximal time $T^*$. If $E(u_0)<0$, then one of the following statements holds true:

$(1)$ $u(t)$ blows up in finite time in the sense that $T^*<+\infty$ must hold.

$(2)$ $u(t)$ blows up infinite time such that
\begin{equation}\label{eq4.1}
  \sup _{t \geq 0}\left\|\nabla u(\cdot, t)\right\|_{L^2}=\infty .
\end{equation}
\end{theorem}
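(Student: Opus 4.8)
The plan is to argue by contradiction using a truncated virial (Glassey-type) identity for radial data, in the spirit of the blow-up argument in \cite{KCEMF2006}. Suppose that neither $(1)$ nor $(2)$ holds. Then $T^{*}=+\infty$ and $M:=\sup_{t\ge 0}\|\nabla u(t)\|_{L^{2}}<\infty$, so by the conservation of mass \eqref{eq1.2} also $M_{1}:=\sup_{t\ge 0}\|u(t)\|_{H^{1}}<\infty$. This uniform $H^{1}$ bound — which is exactly what the negation of $(2)$ provides — is the key input that will let us control the error terms below simultaneously at all times.

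Fix a radial $\varphi\in C^{\infty}(\mathbb{R}^{N})$ with $\varphi(x)=|x|^{2}$ for $|x|\le 1$, $\varphi$ constant for $|x|\ge 2$, $0\le\varphi'(|x|)\le 2|x|$ and $\varphi''\le 2$; put $\varphi_{R}(x)=R^{2}\varphi(x/R)$ and $I_{R}(t):=\int_{\mathbb{R}^{N}}\varphi_{R}(x)\,|u(t,x)|^{2}\,dx$, so that $0\le I_{R}(t)\le 4R^{2}\|u_{0}\|_{L^{2}}^{2}$ for all $t$. Since $(F_{5})$ gives $f(z)=z\tilde f(|z|)$, one has $\mathrm{Im}(\overline{u}f(u))=0$ and hence the continuity equation $\partial_{t}|u|^{2}+\nabla\cdot\big(2\,\mathrm{Im}(\overline{u}\nabla u)\big)=0$; using this, \eqref{eq1.1}, and a regularization/approximation argument justified by the bounds above, I would obtain
\begin{equation*}
I_{R}'(t)=2\,\mathrm{Im}\int_{\mathbb{R}^{N}}\overline{u}\,\nabla\varphi_{R}\cdot\nabla u\,dx ,
\end{equation*}
\begin{equation*}
I_{R}''(t)=\int_{\mathbb{R}^{N}}\Big(4\,\partial_{j}\partial_{k}\varphi_{R}\,\mathrm{Re}(\partial_{j}u\,\overline{\partial_{k}u})-|u|^{2}\,\Delta^{2}\varphi_{R}\Big)\,dx-2\int_{\mathbb{R}^{N}}\Delta\varphi_{R}\,\big(\mathrm{Re}(f(u)\overline{u})-F(u)\big)\,dx .
\end{equation*}
For radial $u$ the Hessian term collapses to $4\varphi''|\nabla u|^{2}\le 8|\nabla u|^{2}$; moreover $\Delta\varphi_{R}=2N$ on $\{|x|\le R\}$, while $\Delta^{2}\varphi_{R}$ and $(\Delta\varphi_{R}-2N)\mathbf{1}_{\{|x|>R\}}$ are supported in $\{R\le|x|\le 2R\}$ with $|\Delta^{2}\varphi_{R}|\le CR^{-2}$ and $|\Delta\varphi_{R}|\le C$. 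Writing $\mathcal{P}(v):=8\|\nabla v\|_{L^{2}}^{2}-4N\int_{\mathbb{R}^{N}}\big(\mathrm{Re}(f(v)\overline{v})-F(v)\big)\,dx$ for the untruncated virial functional, these facts yield
\begin{equation*}
I_{R}''(t)\le \mathcal{P}(u(t))+CR^{-2}\|u_{0}\|_{L^{2}}^{2}+C\int_{\{|x|>R\}}\big(|\mathrm{Re}(f(u)\overline{u})|+|F(u)|\big)\,dx .
\end{equation*}

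It then remains to show that, for a suitable large $R$, the right-hand side is bounded above by a negative constant uniformly in $t\ge 0$. For the last term, the radial (Strauss) inequality $|u(t,x)|\le CM_{1}|x|^{-(N-1)/2}$ for $|x|\ge 1$ makes $|u(t,x)|$ fall below any prescribed threshold on $\{|x|>R\}$ once $R$ is large; then $(F_{1})$ — used here essentially, not merely as a growth condition — and its integrated consequence give $|\mathrm{Re}(f(u)\overline{u})|+|F(u)|\le \varepsilon|u|^{2}+C|u|^{2^{*}}$ there, whence, by \eqref{eq1.2} and the Strauss bound, $\int_{\{|x|>R\}}(\cdots)\le \varepsilon\|u_{0}\|_{L^{2}}^{2}+C(M_{1})R^{-N/(N-2)}$; choosing $\varepsilon$ small and then $R$ large makes both error terms arbitrarily small, uniformly in $t$. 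For $\mathcal{P}(u(t))$ one uses the conservation of energy \eqref{eq1.3} to trade $\int F(u)$ for $E(u_{0})$: in the pure-power case the pointwise identity $\mathrm{Re}(f(u)\overline{u})=\tfrac{2^{*}}{2}F(u)$ reduces $\mathcal{P}(u)$ to $\tfrac{16}{N-2}\big(NE(u_{0})-\|\nabla u\|_{L^{2}}^{2}\big)\le \tfrac{16N}{N-2}E(u_{0})$ (up to the normalisation in \eqref{eq1.3}), so that $\mathcal{P}(u(t))\le c_{N}E(u_{0})=:-2\delta_{0}<0$ for all $t$, with $c_{N}>0$. Combining, $I_{R}''(t)\le-\delta_{0}$ for all $t\in[0,\infty)$, hence $I_{R}(t)\le I_{R}(0)+I_{R}'(0)\,t-\tfrac{\delta_{0}}{2}t^{2}\to-\infty$ as $t\to+\infty$, contradicting $I_{R}(t)\ge 0$. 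This contradiction proves the dichotomy.

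The main obstacle is the sign estimate $\mathcal{P}(u(t))\le -2\delta_{0}<0$: for a general nonlinearity there is no pointwise proportionality between $\mathrm{Re}(f(u)\overline{u})$ and $F(u)$, so $\mathcal{P}$ cannot be reduced to $E(u_{0})$ by algebra alone. I expect this step to require genuinely exploiting $(F_{1})$--$(F_{5})$ — an integrated inequality of the type $\int\mathrm{Re}(f(u)\overline{u})\ge \tfrac{N+2}{N}\int F(u)$, the delicate region being where $|u|$ is small (controlled by $(F_{1})$, $(F_{2})$) — combined with \eqref{eq1.3} and the fact that $E(u_{0})<0$ forces $\int F(u(t))>0$ uniformly in $t$; the variational/Pohozaev estimates developed for Section 6 are the natural source of such an inequality. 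A secondary, technical difficulty is making the virial identities rigorous at the $H^{1}$ level without a finite-variance assumption, which is precisely why the truncation $\varphi_{R}$ and an approximation argument are needed, the radial symmetry and the uniform bound $M_{1}$ from $\neg(2)$ being essential throughout.
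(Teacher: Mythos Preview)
Your overall strategy---a localized radial virial/Glassey argument driven by $E(u_0)<0$---is exactly the mechanism the paper uses, but the paper organizes the endgame differently. Rather than arguing by contradiction and driving the nonnegative quantity $I_R(t)$ to $-\infty$, the paper works one derivative lower: it shows (Lemma~\ref{L4.1}) that $\frac{d}{dt}\mathcal{M}_{\psi_R}[u(t)]\le 8E(u_0)=:-A^*<0$, integrates once to get $\mathcal{M}_{\psi_R}[u(t)]\le -\tfrac{A^*}{2}t$ for $t$ large, and then invokes the $H^{1/2}$--type commutator bound of Lemma~\ref{L2.4},
\[
|\mathcal{M}_{\psi_R}[u(t)]|\ \lesssim\ \|\nabla u(t)\|_{L^2}\,\|u(t)\|_{L^2}+\|u(t)\|_{L^2}^{3/2}\|\nabla u(t)\|_{L^2}^{1/2},
\]
together with mass conservation, to conclude $\|\nabla u(t)\|_{L^2}^{2}\gtrsim t$. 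This route yields a \emph{quantitative} growth rate and, in principle, does not rely on the contradiction hypothesis $\sup_t\|\nabla u(t)\|_{L^2}<\infty$ to control the localization errors. Your version, by contrast, uses that hypothesis essentially (through the Strauss bound with constant $M_1$) and only gives unboundedness, not a rate; on the other hand it is completely self-contained and avoids the extra input Lemma~\ref{L2.4}.

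As for the obstacle you single out---the sign of $\mathcal{P}(u(t))$ for a general nonlinearity---this is precisely what the paper packages as Lemma~\ref{L4.1}. The paper's computation integrates by parts using $(F_5)$ to express the nonlinear contribution through $\int\Delta\psi_R\,F(u)$ and then compares with $E(u)$, arriving at $\frac{d}{dt}\mathcal{M}_{\psi_R}\le 8E(u)$ after discarding exterior terms of size $O(R^{-2})$. Your own (correct) virial identity shows that in fact \emph{both} $\mathrm{Re}(f(u)\bar u)$ and $F(u)$ enter, and reducing this to a multiple of $E(u_0)$ is not a matter of algebra alone under $(F_1)$--$(F_5)$; the paper's derivation is terse on exactly this point. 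So your diagnosis of the difficulty is accurate: either one accepts Lemma~\ref{L4.1} as the black box that closes the argument, or one must supply an integrated inequality of the type you describe (e.g.\ $N\!\int\mathrm{Re}(f(u)\bar u)\ge (N+2)\!\int F(u)$, up to errors controllable by $(F_1)$--$(F_2)$ and the radial decay), and the paper does not do the latter explicitly.
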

\begin{remark}\label{r1.4}
Under the assumption of Theorem \ref{t1.1}, we can define a maximal interval $I(u_0)=(t_0-T_{-}(u_0),t_0+T_{+}(u_0))$, with $T_{\pm}(u_0)>0$, where the solution is defined. If $T_{+}(u_0)<\infty$, then by standard finite blow-up criterion, we know that
\begin{equation*}
  \|u\|_{L_{[t_0,t_0+T_{+}(u_0)]}^\frac{2(N+2)}{N-2}W^{1,{\frac{2N(N+2)}{N^2+4}}}}=+\infty,
\end{equation*}
the corresponding result holds for $T_{-}(u_0)$.
\end{remark}

\begin{theorem}\label{t1.3}
Let $(F_1)-(F_6)$ hold, $N=3,4$. Assume that
\begin{equation*}
  E\left(u_0\right)<E(W), \int_{\mathbb{R}^N}\left|\nabla u_0\right|^2dx<\int_{\mathbb{R}^N}|\nabla W|^2dx
\end{equation*}
and $u_0$ is radial. Then there exist $u_{0,+}, u_{0,-}$ in $H^1$ such that
$$
\lim\limits_{t \rightarrow+\infty}\left\|u(t)-e^{i t \Delta} u_{0,+}\right\|_{H^1}=0, \  \lim\limits_{t \rightarrow-\infty}\left\|u(t)-e^{i t \Delta} u_{0,-}\right\|_{H^1}=0 .
$$
\end{theorem}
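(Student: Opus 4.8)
The plan is to run the concentration--compactness/rigidity scheme of Kenig and Merle \cite{KCEMF2006}, adapted to the general nonlinearity $f$ and to the radial class $H^1_{rad}$. \emph{Step 1 (energy trapping, global existence, reduction to a spacetime bound).} First I would extract the variational consequences of the hypotheses: combining the sharp Sobolev (Aubin--Talenti) inequality with the bounds on $F$ coming from $(F_1)$, $(F_2)$ and $(F_6)$ established in Section~6, one shows that the region $\mathcal{R}:=\{v\in H^1_{rad}:E(v)<E(W),\ \|\nabla v\|_{L^2}<\|\nabla W\|_{L^2}\}$ is invariant under the flow of \eqref{eq1.1}, that the associated Pohozaev functional stays positive on $\mathcal{R}$, and that there is $\delta=\delta(E(u_0))>0$ with $\|\nabla u(t)\|_{L^2}^2\le(1-\delta)\|\nabla W\|_{L^2}^2$ for all $t$ in the maximal interval $I(u_0)$. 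In particular $\|u(t)\|_{H^1}$ is bounded on $I(u_0)$ (using also mass conservation \eqref{eq1.2}), so the blow-up criterion of Remark~\ref{r1.4} together with the local theory of Theorem~\ref{t1.1} forces $I(u_0)=\mathbb{R}$; by the usual Strichartz/Duhamel argument, scattering in $H^1$ is then equivalent to the global bound $\|u\|_{L^{2(N+2)/(N-2)}_t W^{1,{\frac{2N(N+2)}{N^2+4}}}_x(\mathbb{R}\times\mathbb{R}^N)}<\infty$.

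\emph{Step 2 (critical element).} Let $E_c$ be the supremum of all levels $E_0\le E(W)$ such that every radial datum with $E(u_0)<E_0$ and $\|\nabla u_0\|_{L^2}<\|\nabla W\|_{L^2}$ yields the global bound above; small-data theory gives $E_c>0$, and it suffices to prove $E_c=E(W)$. Assuming $E_c<E(W)$, I would feed a linear profile decomposition for bounded sequences in $H^1_{rad}$ into the long-time perturbation/stability theory built from Theorem~\ref{t1.1} and the nonlinear estimates of Remarks~\ref{r1.1}--\ref{r1.2}, so as to produce a critical element: a global radial solution $u_c\not\equiv0$ with $E(u_c)=E_c$, staying in $\mathcal{R}$, whose orbit is precompact in $H^1$ modulo scaling, i.e.\ there is $\lambda(t)>0$ such that $\{\lambda(t)^{-\frac{N-2}{2}}u_c(\lambda(t)^{-1}x,t):t\in\mathbb{R}\}$ is precompact in $H^1(\mathbb{R}^N)$. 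No spatial-translation parameter occurs, since we work in the radial class.

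\emph{Step 3 (rigidity).} Working on $[0,\infty)$, I would first control $\lambda(t)$: the local constancy of $\lambda$ coming from continuity of the flow, the radial Sobolev embedding, and the conservation laws \eqref{eq1.2}--\eqref{eq1.3} rule out a sequence $\lambda(t_n)\to0$ and reduce matters to $\lambda(t)\gtrsim1$. Then I would run a truncated virial estimate: for a smooth radial cutoff $\phi$ equal to $1$ near the origin, set $V_R(t)=\int_{\mathbb{R}^N}\phi(x/R)|x|^2|u_c(t,x)|^2\,dx$; differentiating twice, using the Pohozaev/virial identity attached to $f$ and $F$ (this is exactly where $(F_5)$ and $(F_6)$ enter, fixing the sign of the bulk term), the coercivity of the Pohozaev functional from Step~1, and the compactness of the orbit together with $\lambda(t)\gtrsim1$ to absorb the errors supported on $\{|x|\gtrsim R\}$, one gets $V_R''(t)\ge c>0$ uniformly in $t$ for $R$ large, which contradicts $0\le V_R(t)\le R^2\|u_0\|_{L^2}^2$. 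Hence $u_c\equiv0$, a contradiction, so $E_c=E(W)$; since \eqref{eq1.1} is invariant under $t\mapsto-t$, $u\mapsto\bar u$ by $(F_5)$, the backward statement follows identically.

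\emph{Main obstacle.} The essential new difficulty with respect to \cite{KCEMF2006} is the loss of exact scaling invariance: by Remark~\ref{r1.2}, $f$ carries a subcritical piece of order $|u|^{\frac{2}{N-2}}u$, so rescaled profiles and the virial functional do not transform homogeneously. The work therefore concentrates on three points: (i) showing that under the rescalings of the profile decomposition the subcritical part of the nonlinearity is negligible in the limit, so the limiting profile equation is driven by the critical part alone and $W$ still governs the threshold; (ii) upgrading, via $(F_6)$ and Section~6, the sharp variational estimates (energy trapping and positivity of the Pohozaev functional) from the pure-power case to the general $F$; and (iii) keeping the virial computation robust under the extra term --- which is precisely why the argument is restricted to $N=3,4$, the range in which the nonlinear difference estimates of Remark~\ref{r1.2} are available. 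Once (i)--(iii) are in place, the compactness and rigidity steps go through as in the power case.
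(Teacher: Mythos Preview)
Your overall strategy---the Kenig--Merle concentration--compactness/rigidity scheme---is exactly what the paper carries out in Sections~6--9, culminating in Corollary~\ref{c9.1}. However, two points in your proposal deserve correction.

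\textbf{A genuine gap in Step~1.} You claim that the uniform $H^1$ bound on $u(t)$, together with the blow-up criterion of Remark~\ref{r1.4}, forces $I(u_0)=\mathbb{R}$. This does not follow: the blow-up criterion says the \emph{Strichartz} norm diverges at $T_+$, not the $H^1$ norm, and in the energy-critical regime the local existence time depends on the profile of the data (through $\|e^{it\Delta}u_0\|_{L^{2(N+2)/(N-2)}_t W^{1,2N(N+2)/(N^2+4)}_x}$), not merely on $\|u_0\|_{H^1}$. The paper never separates global existence from scattering: the critical element in Lemma~\ref{L5.2} is allowed to have finite $T_+(u_{0,C})$, compactness is established on $I_+=(0,T_+)\cap I$ in Lemma~\ref{L5.3}, and the rigidity Theorem~\ref{t9.1} (via Lemma~\ref{L9.1}, Case~1) rules out finite-time blow-up as part of its conclusion. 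You should reorganize accordingly; the rest of your argument is unaffected.

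\textbf{A different treatment of $W$.} Your ``Main obstacle'' paragraph reads $W$ as the Aubin--Talenti extremizer and proposes to show the subcritical piece of $f$ is negligible under the rescalings of the profile decomposition, so that the limiting profile equation is the pure critical one. The paper does not do this. In Section~6, $W$ is a ground state of the \emph{full} equation $\Delta W+f(W)=0$, supplied by $(F_6)$ and Berestycki--Lions; the energy trapping (Lemma~\ref{L5.1}, Theorem~\ref{t5.1}) is obtained by direct pointwise bounds $F(u)\le\varepsilon|u|^2+C_\varepsilon|u|^{2^*}$ from $(F_1)$--$(F_2)$, not by the sharp Sobolev constant; and the profile decomposition (Lemmas~\ref{L7.2}--\ref{L7.3}) and rigidity (Lemma~\ref{L9.3}, Theorem~\ref{t9.1}) carry the full nonlinearity throughout. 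So your points (i)--(iii) describe difficulties the paper simply sidesteps by redefining the threshold object; in particular the thresholds $E(W)$, $\|\nabla W\|_{L^2}$ in the statement refer to this full ground state, not to the Aubin--Talenti function.

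One incidental remark: your Step~3 mechanism for excluding $\lambda(t_n)\to0$ via mass conservation is actually available here (since $\|v(\cdot,t)\|_{L^2}=\lambda(t)\|u_0\|_{L^2}$ and $\overline{K}$ compact in $H^1$ forces the limit to vanish in $L^2$ while $\|\nabla v\|_{L^2}$ stays bounded below), and is simpler than the paper's rescaling argument at the end of the proof of Theorem~\ref{t9.1}.
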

\begin{remark}\label{r1.5}
Theorem \ref{t1.3} is a part of Corollary \ref{c9.1}. In fact, according to the analysis in Remark \ref{r4.1}, we only need to prove that the conclusion holds when t tends towards positive infinity, and the other half of the conclusion can be obtained similarly.
\end{remark}

In section 2, we provide some notations and some important lemma in the proof of main theorems. In the next section, we aim to prove theorem \ref{t1.1}, that is well-posedness. In section 4, we get the blow up solutions in infinite time. Finally, we will obtain the scattering asymptotics. To achieve this goal, we need some new variational estimates and compactness results, that is, sections 6, 7 and 8.

\section{Preliminary}
In this section, we provide some notations and some important lemma in the proof of main theorems.

\begin{lemma}\label{L2.1}(Strichartz estimate \cite{{MKTT1998},{TCS2003}}). We say that a pair of exponents $(q, r)$ is admissible if $\frac{2}{q}+\frac{N}{r}=\frac{N}{2}$ and $2 \leq q, r \leq \infty$. Then, if $2 \leq r \leq \frac{2 N}{N-2}$ $(N \geq 3)$$($or $2 \leq r<\infty, N=2$ and $2 \leq r \leq \infty, N=1)$ we have\\
i)
$$
\left\|e^{i t \Delta} h\right\|_{L_t^q L_x^r} \leq C\|h\|_{L^2},
$$
ii)
$$
\left\|\int_{-\infty}^{+\infty} e^{i(t-\tau) \Delta} g(-, \tau) d \tau\right\|_{L_t^q L_x^r}+\left\|\int_0^t e^{i(t-\tau) \Delta} g(-, \tau) d \tau\right\|_{L_t^q L_x^r} \leq C\|g\|_{L_t^{q^{\prime}} L_x^{r^{\prime}}},
$$
iii)
$$
\left\|\int_{-\infty}^{+\infty} e^{i t \Delta} g(-, \tau) d \tau\right\|_{L_x^2} \leq C\|g\|_{L_t^{q^{\prime}} L_x^{\prime^{\prime}}} .
$$
\end{lemma}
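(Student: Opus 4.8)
The plan is to derive all three estimates from the fundamental dispersive bound for the free Schr\"odinger propagator together with a duality ($TT^*$) argument, so that the only genuinely new input beyond Plancherel is the pointwise kernel decay. First I would record the two basic facts: mass conservation $\|e^{it\Delta}h\|_{L^2_x}=\|h\|_{L^2}$ (Plancherel), and the pointwise bound $\|e^{it\Delta}h\|_{L^\infty_x}\le C|t|^{-N/2}\|h\|_{L^1_x}$, which comes from writing $e^{it\Delta}h(x)=(4\pi i t)^{-N/2}\int_{\mathbb{R}^N}e^{i|x-y|^2/(4t)}h(y)\,dy$. Interpolating these two endpoints by Riesz--Thorin gives the decay estimate
$$
\|e^{it\Delta}h\|_{L^r_x}\le C|t|^{-N\left(\frac12-\frac1r\right)}\|h\|_{L^{r'}_x},\qquad 2\le r\le\infty .
$$

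Next I would set $Th:=e^{it\Delta}h$ and observe that statement i) asserts boundedness of $T:L^2_x\to L^q_tL^r_x$; by duality this is equivalent to $T^*:L^{q'}_tL^{r'}_x\to L^2_x$ being bounded, which is exactly iii), and both are in turn equivalent to boundedness of $TT^*:L^{q'}_tL^{r'}_x\to L^q_tL^r_x$, where $(TT^*g)(t)=\int_{\mathbb{R}}e^{i(t-\tau)\Delta}g(\tau)\,d\tau$. For the non-endpoint admissible pairs ($q>2$) I would apply the decay estimate above in the $x$ variable to the kernel $e^{i(t-\tau)\Delta}$ and then use the Hardy--Littlewood--Sobolev (fractional integration) inequality in the $t$ variable; the admissibility relation $\frac2q+\frac Nr=\frac N2$ is precisely what forces the exponents in HLS to match, yielding $\|TT^*g\|_{L^q_tL^r_x}\le C\|g\|_{L^{q'}_tL^{r'}_x}$. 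For the endpoint pair $(q,r)=\left(2,\frac{2N}{N-2}\right)$ with $N\ge3$, HLS fails on the diagonal, and one must instead run the Keel--Tao dyadic decomposition of the time separation $|t-\tau|$, bounding each dyadic piece by interpolating between the $L^2_x\to L^2_x$ bound and the decay bound, and then summing the resulting (bilinearly weighted) geometric series; this is the delicate part of the argument.

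Finally, the homogeneous retarded estimate in ii) over $(-\infty,+\infty)$ follows by composing the $TT^*$ bound with i) and iii). The truncated Duhamel term $\int_0^t e^{i(t-\tau)\Delta}g(\tau)\,d\tau$ is then recovered from the untruncated one by the Christ--Kiselev lemma, which converts the full-line kernel estimate into its forward-in-time restriction at the price of a strict inequality $q>q'$, hence covers all non-endpoint pairs; at the endpoint one again invokes the refined Keel--Tao analysis directly, since Christ--Kiselev is not available there. I expect the main obstacle to be exactly this endpoint case $q=2$: both the dyadic estimate for $TT^*$ and the substitute for Christ--Kiselev require the careful atomic/interpolation decomposition rather than the soft duality chain. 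As the statement is classical, in practice I would cite \cite{MKTT1998,TCS2003} for the endpoint and present the $TT^*$ plus Hardy--Littlewood--Sobolev plus Christ--Kiselev argument for the remaining admissible pairs.
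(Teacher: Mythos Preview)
Your outline is correct and is precisely the standard route to the Strichartz estimates: dispersive decay plus $TT^*$ and Hardy--Littlewood--Sobolev for the non-endpoint pairs, the Keel--Tao dyadic/bilinear argument at the endpoint, and Christ--Kiselev (or the Keel--Tao refinement at $q=2$) for the retarded Duhamel term. The paper itself does not give a proof of this lemma at all; it simply quotes the result with a reference to \cite{MKTT1998,TCS2003}, so there is nothing to compare against beyond noting that your sketch is exactly the argument contained in those references.
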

\begin{remark}\label{R2.1}
In the estimate ii) in Lemma \ref{L2.1}, one can actually show: (\cite{MKTT1998}) ii')
$$
\left\|\int_{-\infty}^{+\infty} e^{i(t-\tau) \Delta} g(-, \tau) d \tau\right\|_{L_t^q L_x^r} \leq C\|g\|_{L_t^{m^{\prime}} L_x^{n^{\prime}}},
$$
where $(q, r),(m, n)$ are any pair of admissible indices as in i) of Lemma 2.1.
\end{remark}

\begin{lemma}\label{L2.2}(Sobolev embedding). For $v \in C_0^{\infty}\left(\mathbb{R}^{N+1}\right)$, we have
\begin{equation*}
  \|v\|_{L_t^{\frac{2(N+2)}{N-2}}L_x^{\frac{2(N+2)}{N-2}}}\leq C\|\nabla_xv\|_{L_t^{\frac{2(N+2)}{N-2}}L_x^{\frac{2N(N+2)}{N^2+4}}}(N\geq 3).
\end{equation*}
(Note that $\frac{2(N+2)}{N-2}=q, \frac{2 N(N+2)}{N^2+4}=r$ is admissible.)
\end{lemma}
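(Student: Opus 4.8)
The plan is to observe that the two mixed norms in the statement share the \emph{same} time exponent $q:=\frac{2(N+2)}{N-2}$, so the inequality is not a genuine space--time interpolation but simply a fixed-time spatial Sobolev embedding integrated against $dt$. Writing $p:=\frac{2N(N+2)}{N^2+4}$ for the spatial exponent appearing on the gradient, I would first establish the pointwise-in-$t$ estimate
\begin{equation*}
  \|v(t,\cdot)\|_{L_x^{q}}\leq C\,\|\nabla_x v(t,\cdot)\|_{L_x^{p}},
\end{equation*}
with a constant $C=C(N)$ independent of $t$, and then raise both sides to the power $q$ and integrate in $t$. For each fixed $t$ the slice $v(t,\cdot)$ lies in $C_0^\infty(\mathbb{R}^N)$, so there are no density or finiteness issues and the classical Gagliardo--Nirenberg--Sobolev inequality applies directly.

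The one genuine point to verify is that $q$ is exactly the Sobolev conjugate of $p$ in dimension $N$, i.e. that $\tfrac{1}{q}=\tfrac{1}{p}-\tfrac{1}{N}$. Since $\tfrac{1}{p}=\frac{N^2+4}{2N(N+2)}$, one computes
\begin{equation*}
  \frac{1}{p}-\frac{1}{N}=\frac{N^2+4-2(N+2)}{2N(N+2)}=\frac{N^2-2N}{2N(N+2)}=\frac{N-2}{2(N+2)}=\frac{1}{q},
\end{equation*}
which is precisely the scaling relation required. One must also check the admissibility range $1<p<N$: a short computation gives $p=2+\frac{4(N-2)}{N^2+4}$, so $p>2$, while $p<N$ is equivalent to $2(N+2)<N^2+4$, i.e. $N>2$; both hold for every $N\geq3$. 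Hence the spatial inequality $\|v(t,\cdot)\|_{L_x^{q}}\leq C\|\nabla_x v(t,\cdot)\|_{L_x^{p}}$ is valid with a dimensional constant.

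For the final step I would raise the fixed-time inequality to the power $q$, integrate over $t\in\mathbb{R}$, and take the $q$-th root:
\begin{equation*}
  \Big(\int_{\mathbb{R}}\|v(t,\cdot)\|_{L_x^{q}}^{q}\,dt\Big)^{1/q}\leq C\Big(\int_{\mathbb{R}}\|\nabla_x v(t,\cdot)\|_{L_x^{p}}^{q}\,dt\Big)^{1/q},
\end{equation*}
which is exactly $\|v\|_{L_t^{q}L_x^{q}}\leq C\|\nabla_x v\|_{L_t^{q}L_x^{p}}$, the claimed bound. I do not expect a serious obstacle here: the coincidence of the time exponents on both sides is what allows the clean slicing argument, so no Minkowski-type mixed-norm interpolation is needed, and the whole content reduces to the exponent arithmetic verified above together with the standard Sobolev inequality. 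The only care required is to confirm that the Sobolev constant is $t$-independent (which it is, depending only on $N$ and the exponents) so that it factors out of the time integral.
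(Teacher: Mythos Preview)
Your proof is correct; the paper itself states this lemma as a standard Sobolev embedding without giving a proof, so your argument --- freezing $t$, applying the scalar Sobolev inequality $\dot W^{1,p}(\mathbb{R}^N)\hookrightarrow L^{p^*}(\mathbb{R}^N)$ after checking $p^*=q$, and then integrating in $t$ --- is exactly the intended justification.
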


\begin{lemma}\label{L2.3}(\cite{VDD2018}) Let $F(z)=|z|^k z$ with $k>0, s \geq 0$ and $1<p, p_1<\infty, 1<q_1 \leq \infty$ satisfying $\frac{1}{p}=\frac{1}{p_1}+\frac{k}{q_1}$. If $k$ is an even integer or $k$ is not an even integer with $[s] \leq k$, then there exists $C>0$ such that for all $u \in \mathscr{S}$,
$$
\|F(u)\|_{\dot{W}^{s, p}} \leq C\|u\|_{L^{q_1}}^k\|u\|_{\dot{W}^{s, p_1}} .
$$
A similar estimate holds with $\dot{W}^{s, p}, \dot{W}^{s, p_1}$-norms replaced by $W^{s, p}, W^{s, p_1}$ norms.
\end{lemma}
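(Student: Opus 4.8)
The plan is to reduce the estimate to two classical building blocks --- the Kato--Ponce fractional Leibniz rule and the Christ--Weinstein fractional chain rule --- and to organize the argument by splitting the smoothness order as $s = m + \sigma$ with $m = [s] \in \mathbb{N}\cup\{0\}$ and $\sigma \in [0,1)$. First I would dispose of the inhomogeneous statement: since $|F(u)| = |u|^{k+1} = |u|^k\,|u|$, H\"older's inequality with the given relation $\frac1p = \frac{k}{q_1} + \frac{1}{p_1}$ gives $\|F(u)\|_{L^p} \le \||u|^k\|_{L^{q_1/k}}\,\|u\|_{L^{p_1}} = \|u\|_{L^{q_1}}^k \|u\|_{L^{p_1}} \le \|u\|_{L^{q_1}}^k \|u\|_{W^{s,p_1}}$, so that once the homogeneous bound $\|F(u)\|_{\dot W^{s,p}} \lesssim \|u\|_{L^{q_1}}^k\|u\|_{\dot W^{s,p_1}}$ is proved, adding the two yields the $W^{s,p}$ version. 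Throughout, the exponent identity $\frac1p = \frac{k}{q_1} + \frac1{p_1}$ is exactly the scaling relation that makes every subsequent application of H\"older and of Gagliardo--Nirenberg close.

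For the base range $0 \le s < 1$ the estimate is immediate from the fractional chain rule of Christ--Weinstein: for $G\in C^1$ and $\frac1p = \frac1a + \frac1{p_1}$ one has $\||\nabla|^s G(u)\|_{L^p} \lesssim \|G'(u)\|_{L^a}\,\||\nabla|^s u\|_{L^{p_1}}$, and this applies to $G(z)=|z|^k z$ for any $k>0$ because $G$ obeys the pointwise Lipschitz bound $|G(z_1)-G(z_2)| \lesssim (|z_1|^k+|z_2|^k)\,|z_1-z_2|$. Taking $a = q_1/k$ gives $\|G'(u)\|_{L^{q_1/k}} \lesssim \||u|^k\|_{L^{q_1/k}} = \|u\|_{L^{q_1}}^k$, which is the claim. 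No restriction on $k$ is needed in this range, since $s<1$ only uses the first-order (Lipschitz-type) regularity of $F$.

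For $s \ge 1$ I would pass $m = [s]$ integer derivatives through by the ordinary chain rule and reserve the remaining fractional order $\sigma$ for the fractional tools. Using the equivalent norm $\|g\|_{\dot W^{s,p}} \sim \sum_{|\alpha|=m}\||\nabla|^\sigma \partial^\alpha g\|_{L^p}$ (valid for $1<p<\infty$ by the Mikhlin multiplier theorem), I expand $\partial^\alpha F(u)$ by the Fa\`a di Bruno formula into a finite sum of terms $F^{(j)}(u)\prod_{i=1}^j \partial^{\beta_i}u$ with $\sum_i|\beta_i| = m$ and $1\le j\le m$, where $|F^{(j)}(u)| \lesssim |u|^{k+1-j}$. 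To each such term I apply $|\nabla|^\sigma$, distribute it by the Kato--Ponce fractional Leibniz rule, estimate the fractional derivative of the factor $F^{(j)}(u)$ (which is homogeneous of modulus-degree $k+1-j$) by the fractional chain rule again, and finally invoke Gagliardo--Nirenberg interpolation to rewrite every intermediate norm of $u$ as a product of an $L^{q_1}$-power and a single $\dot W^{s,p_1}$ factor. Since the total number of derivatives ($m+\sigma=s$) and the total modulus-degree ($k+1$) are both fixed, the interpolation exponents are forced by scaling and collapse to $\|u\|_{L^{q_1}}^k\|u\|_{\dot W^{s,p_1}}$; the hypotheses $1<p,p_1<\infty$ are what keep all intermediate Lebesgue exponents in the admissible range $(1,\infty)$.

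The point that needs care --- and the structural reason for the dichotomy in the statement --- is the legitimacy of the integer chain rule at order $m$. The function $|z|^k z$ is a genuine polynomial in $z,\bar z$ precisely when $k$ is an even integer, in which case it is $C^\infty$ and the Fa\`a di Bruno expansion is unconditionally valid; otherwise $|z|^k z$ has only limited smoothness at the origin, and both $\partial^\alpha F(u)$ and the bound $|F^{(j)}(u)|\lesssim|u|^{k+1-j}$ for $|\alpha|=m$ make sense exactly when $m=[s]\le k$. This is where the hypothesis ``$k$ even, or $[s]\le k$'' is used, and it is the only genuine obstacle in the argument; the fractional Leibniz and chain rules themselves are standard, and I would cite them (as in \cite{VDD2018}) rather than reprove them.
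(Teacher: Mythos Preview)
The paper does not give its own proof of this lemma: it is simply quoted from \cite{VDD2018} (Dinh) and placed in the preliminaries without argument. So there is no ``paper's proof'' to compare against beyond the citation itself.

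Your sketch is the standard route and matches what one finds in the literature behind that citation: the base case $0\le s<1$ via the Christ--Weinstein fractional chain rule, the higher-order case by writing $s=m+\sigma$, passing $m$ integer derivatives through with Fa\`a di Bruno, distributing the remaining $|\nabla|^\sigma$ by Kato--Ponce, and closing with Gagliardo--Nirenberg so that all intermediate norms collapse to $\|u\|_{L^{q_1}}^k\|u\|_{\dot W^{s,p_1}}$. You also correctly isolate the only delicate point, namely that $|z|^k z$ is $C^\infty$ precisely when $k$ is an even integer, and otherwise $C^{k+1}$-regular so that the Fa\`a di Bruno expansion at order $m=[s]$ and the pointwise bound $|F^{(j)}(u)|\lesssim|u|^{k+1-j}$ require $[s]\le k$. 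One minor caveat worth flagging if you flesh this out: in the interpolation step you should check that the intermediate Lebesgue exponents produced by Kato--Ponce and Gagliardo--Nirenberg stay strictly inside $(1,\infty)$ (this is where $1<p,p_1<\infty$ and $1<q_1\le\infty$ are used, and the endpoint $q_1=\infty$ needs a small separate remark since some versions of the fractional Leibniz rule exclude $L^\infty$ on one factor). Otherwise the outline is sound and would be accepted as a proof of the cited lemma.
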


\begin{lemma}\label{L2.4}(\cite{TBDH2016}) Let $N \geq 1$ and $f: \mathbb{R}^N \rightarrow \mathbb{R}$ satisfy $\nabla f \in W^{1, \infty}\left(\mathbb{R}^N\right)$. Then, for all $u \in H^{\frac{1}{2}}\left(\mathbb{R}^N\right)$, it holds that
$$
\left|\int_{\mathbb{R}^N} \bar{u}(x) \nabla f(x) \cdot \nabla u(x) d x\right| \leqslant C\left(\left\||\nabla|^{\frac{1}{2}} u\right\|_{L^2}^2+\|u\|_{L^2}\left\||\nabla|^{\frac{1}{2}} u\right\|_{L^2}\right),
$$
with some constant $C>0$ depending only on $\|\nabla f\|_{W^{1, \infty}}$ and $N$.
\end{lemma}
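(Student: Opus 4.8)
The plan is to establish the estimate first for $u$ in the Schwartz class and then extend to $H^{1/2}(\mathbb{R}^N)$ by density, since the right-hand side involves only the $H^{1/2}$-norm. Write $b:=\nabla f$, so that $b\in W^{1,\infty}(\mathbb{R}^N;\mathbb{R}^N)$ with $\|b\|_{L^\infty}+\|\nabla b\|_{L^\infty}\le C\|\nabla f\|_{W^{1,\infty}}$, and set $A:=\int_{\mathbb{R}^N}\bar u\,b\cdot\nabla u\,dx=\sum_j\int\bar u\,b_j\,\partial_j u\,dx$. The difficulty is that for $u\in H^{1/2}$ the gradient $\nabla u$ is not in $L^2$, so the single full derivative carried by $A$ must be split into two half-derivatives, one for each factor. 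To organize this I would cut $u$ into low and high frequencies, $u=u_{\le1}+u_{>1}$ with $u_{\le1}=P_{\le1}u$, and record the elementary Plancherel bounds $\|\nabla u_{\le1}\|_{L^2}\le\||\nabla|^{1/2}u\|_{L^2}$, $\|u_{>1}\|_{L^2}\le\||\nabla|^{1/2}u\|_{L^2}$ and $\||\nabla|^{-1/2}u_{>1}\|_{L^2}\le\|u_{>1}\|_{L^2}$. The guiding principle is that genuine derivatives are harmless on $u_{\le1}$, while negative-order homogeneous norms are harmless on $u_{>1}$.

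Expanding $A$ into the four sesquilinear pieces $B(u_\alpha,u_\beta)$, where $B(v,w):=\int\bar v\,b\cdot\nabla w$ and $\alpha,\beta\in\{\le1,>1\}$, three of them I can dispatch directly. In $B(u_{\le1},u_{\le1})$ and $B(u_{>1},u_{\le1})$ the differentiated factor is $u_{\le1}$, so Cauchy--Schwarz together with $\|\nabla u_{\le1}\|_{L^2}\le\||\nabla|^{1/2}u\|_{L^2}$ yields bounds by $\|u\|_{L^2}\||\nabla|^{1/2}u\|_{L^2}$ and $\||\nabla|^{1/2}u\|_{L^2}^2$ respectively. For $B(u_{\le1},u_{>1})$ I would integrate by parts to transfer the derivative off $u_{>1}$ and onto $\bar u_{\le1}$, producing one term with $\Delta f=\nabla\cdot b$ and one with $\nabla\bar u_{\le1}$; using $\|\Delta f\|_{L^\infty}\le C\|\nabla f\|_{W^{1,\infty}}$, $\|u_{\le1}\|\le\|u\|$, $\|u_{>1}\|\le\||\nabla|^{1/2}u\|$ and again $\|\nabla u_{\le1}\|\le\||\nabla|^{1/2}u\|$, both terms are controlled by $\||\nabla|^{1/2}u\|^2+\|u\|\||\nabla|^{1/2}u\|$. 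These three contributions already exhibit exactly the two terms on the right-hand side.

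The genuinely fractional, and hardest, piece is the purely high-frequency one $B(u_{>1},u_{>1})=\sum_j\int\bar u_{>1}\,b_j\,\partial_j u_{>1}$, since here no factor has an $L^2$ gradient. I would write $\partial_j=|\nabla|^{1/2}S_j$ with $S_j:=\partial_j|\nabla|^{-1/2}$ (symbol $i\xi_j|\xi|^{-1/2}$, so $\|S_j w\|_{L^2}\le\||\nabla|^{1/2}w\|_{L^2}$), use the self-adjointness of $|\nabla|^{1/2}$ to move it onto the other factor, and then split
$$|\nabla|^{1/2}(b_j\bar u_{>1})=b_j\,|\nabla|^{1/2}\bar u_{>1}+[\,|\nabla|^{1/2},b_j\,]\bar u_{>1}.$$
The transported term is bounded by $\|b\|_{L^\infty}\||\nabla|^{1/2}u\|^2$ after Cauchy--Schwarz. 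For the commutator I would invoke the standard estimate $\|[\,|\nabla|^{1/2},b_j\,]w\|_{L^2}\le C\|\nabla b_j\|_{L^\infty}\||\nabla|^{-1/2}w\|_{L^2}$ (valid because $0<\tfrac12<1$ and $\nabla b_j\in L^\infty$); applied to $w=\bar u_{>1}$ the homogeneous negative norm is tamed by the high-frequency bound $\||\nabla|^{-1/2}u_{>1}\|\le\|u_{>1}\|\le\||\nabla|^{1/2}u\|$, so this piece is again $\le C\|\nabla f\|_{W^{1,\infty}}\||\nabla|^{1/2}u\|^2$. I expect the main obstacle to be precisely this commutator step: the homogeneous commutator estimate only controls $\||\nabla|^{-1/2}w\|_{L^2}$, which is finite here only after the high-frequency localization, so the low/high splitting is structurally unavoidable rather than a mere convenience. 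Collecting the four pieces gives the claimed bound with constant depending only on $\|\nabla f\|_{W^{1,\infty}}$ and $N$.
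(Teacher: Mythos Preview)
The paper does not give its own proof of this lemma: it is quoted verbatim from the cited reference \cite{TBDH2016} (Boulenger--Himmelsbach--Lenzmann), so there is no in-paper argument to compare against. Your proposal is therefore being judged on its own merits.

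Your argument is correct and is essentially the standard proof. The low/high frequency splitting $u=u_{\le1}+u_{>1}$, the direct Cauchy--Schwarz treatment of the three ``easy'' pieces (with one integration by parts for $B(u_{\le1},u_{>1})$), and the commutator decomposition for the purely high-frequency piece via $\partial_j=|\nabla|^{1/2}\,\partial_j|\nabla|^{-1/2}$ together with the order-$(-\tfrac12)$ commutator bound $\|[\,|\nabla|^{1/2},b_j\,]w\|_{L^2}\lesssim\|\nabla b_j\|_{L^\infty}\||\nabla|^{-1/2}w\|_{L^2}$ all go through as you describe. Your observation that the high-frequency localization is what makes the homogeneous negative-order norm $\||\nabla|^{-1/2}u_{>1}\|_{L^2}$ finite (hence that the splitting is structural, not cosmetic) is exactly the point. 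The density extension to $H^{1/2}$ is routine. The only place one might ask for a precise citation is the commutator estimate itself, but for $0<s<1$ and Lipschitz $b$ this is classical (Calder\'on commutator / Coifman--Meyer / Kenig--Ponce--Vega), so your invocation is legitimate.
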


\begin{definition}\label{D2.1}
Let $v_0 \in H^1, v(t)=e^{i t \Delta} v_0$ and let $\left\{t_n\right\}$ be a sequence, with $\lim\limits_{n \rightarrow \infty} t_n=\bar{t} \in[-\infty,+\infty]$. We say that $u(x, t)$ is a non-linear profile associated with $\left(v_0,\left\{t_n\right\}\right)$ if there exists an interval $I$, with $\bar{t} \in I$ (if $\bar{t}= \pm \infty$, $I=[a,+\infty)$ or $(-\infty, a])$ such that $u$ is a solution of \eqref{eq1.1} in $I$ and
$$
\lim\limits_{n \rightarrow \infty}\left\|u\left(-, t_n\right)-v\left(-, t_n\right)\right\|_{H^1}=0 .
$$
\end{definition}
Next, we give some definitions that play a crucial role in the proof of concentration compactness in section 6.

\begin{definition}\label{D2.2}
(i) We call scale, every sequence $\mathbf{h}=\left(h_n\right)_{n \geqslant o}$ of positive numbers and core, every sequence $\mathbf{z}=\left(z_n\right)_{n \geqslant 0}=\left(t_n, x_n\right)_{n \geqslant 0} \subset \mathbb{R} \times \mathbb{R}^N$.

(ii) We say that two pairs $(\mathbf{h}, \mathbf{z})$ and $\left(\mathbf{h}^{\prime}, \mathbf{z}^{\prime}\right)$ are orthogonal if
$$
\frac{h_n}{h_n^{\prime}}+\frac{h_n^{\prime}}{h_n}+\left|\frac{t_n-t_n^{\prime}}{\left(h_n\right)^2}\right|+\left|\frac{x_n-x_n^{\prime}}{h_n}\right| \rightarrow+\infty,\ n \rightarrow \infty .
$$
\end{definition}

\begin{definition}\label{D2.3}
(i) A pair $(q, r)$ is $L^2$-admissible, if $r \in[2,\frac{2N}{N-2})$ and $q$ satisfies
$$
\frac{2}{q}+\frac{N}{r}=\frac{N}{2} .
$$
(ii) A pair $(q, r)$ is $H^1$-admissible, if $r \in[\frac{2N}{N-2},+\infty)$ and $q$ satisfies
$$
\frac{2}{q}+\frac{N}{r}=\frac{N-2}{2} .
$$
\end{definition}
The main result proved in \cite{{JGGV1984},{KY1987}} is the following:

\begin{proposition}\label{P2.1}
Let $(q, r)$ be an $L^2$-admissible pair. There exists $C=$ $C(r)$, such that
 \begin{equation}\label{eq2.1}
   \left\|e^{i t \Delta} h\right\|_{L_t^q L_x^r} \leq  C\| \varphi \|_{L^2\left(\mathbb{R}^N\right)}
 \end{equation}
for every $\varphi \in L^2(\mathbb{R}^N)$.
\end{proposition}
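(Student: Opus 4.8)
The plan is to establish \eqref{eq2.1} by the classical $TT^*$ method, combining the pointwise-in-time dispersive decay of the free Schr\"odinger group with the Hardy--Littlewood--Sobolev inequality. A useful preliminary observation is that the hypothesis $r\in[2,\frac{2N}{N-2})$ together with the admissibility identity $\frac{2}{q}+\frac{N}{r}=\frac{N}{2}$ forces $q\in(2,\infty]$; in particular the endpoint case $(q,r)=(2,\frac{2N}{N-2})$ is excluded, so no Keel--Tao type argument is needed. When $r=2$ one has $q=\infty$ and \eqref{eq2.1} is just the unitarity of $e^{it\Delta}$ on $L^2$, so in what follows I assume $r\in(2,\frac{2N}{N-2})$, hence $q\in(2,\infty)$.

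First I would record the dispersive estimate. From the explicit kernel $e^{it\Delta}h(x)=(4\pi it)^{-N/2}\int_{\R^N}e^{i|x-y|^2/(4t)}h(y)\,dy$ one has $\|e^{it\Delta}h\|_{L^\infty_x}\leq C|t|^{-N/2}\|h\|_{L^1_x}$ for $t\neq0$, and interpolating this (Riesz--Thorin) with $\|e^{it\Delta}h\|_{L^2_x}=\|h\|_{L^2_x}$ yields, for $2\le r\le\infty$,
\begin{equation*}
\|e^{it\Delta}h\|_{L^r_x}\leq C|t|^{-N\left(\frac12-\frac1r\right)}\|h\|_{L^{r'}_x},\qquad \tfrac1r+\tfrac1{r'}=1.
\end{equation*}
By the $TT^*$ principle, \eqref{eq2.1}, i.e. the boundedness of $h\mapsto e^{it\Delta}h$ from $L^2$ into $L^q_tL^r_x$, is equivalent to the boundedness of $g\mapsto\int_{\R}e^{i(t-s)\Delta}g(\cdot,s)\,ds$ from $L^{q'}_tL^{r'}_x$ into $L^q_tL^r_x$.

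To prove this last estimate I would bound, for fixed $t$, first by Minkowski's integral inequality and then by the dispersive estimate above,
\begin{equation*}
\Big\|\int_{\R}e^{i(t-s)\Delta}g(\cdot,s)\,ds\Big\|_{L^r_x}\leq\int_{\R}\|e^{i(t-s)\Delta}g(\cdot,s)\|_{L^r_x}\,ds\leq C\int_{\R}|t-s|^{-N\left(\frac12-\frac1r\right)}\|g(\cdot,s)\|_{L^{r'}_x}\,ds,
\end{equation*}
and then take the $L^q_t$-norm of both sides. Writing $\phi(s):=\|g(\cdot,s)\|_{L^{r'}_x}$, so that $\|\phi\|_{L^{q'}_t}=\|g\|_{L^{q'}_tL^{r'}_x}$, the right-hand side is a constant times the fractional integration of $\phi$ against the kernel $|t-s|^{-N(\frac12-\frac1r)}$, and the Hardy--Littlewood--Sobolev inequality bounds its $L^q_t$-norm by $C\|\phi\|_{L^{q'}_t}$ provided the scaling relation $\frac1q=\frac1{q'}+N(\frac12-\frac1r)-1$ holds together with $0<N(\frac12-\frac1r)<1$ and $1<q'<q<\infty$. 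The scaling relation is precisely the admissibility identity $\frac2q+\frac Nr=\frac N2$, while the two side conditions follow at once from $r\in(2,\frac{2N}{N-2})$. This closes the estimate, and \eqref{eq2.1} for an arbitrary $\varphi\in L^2$ then follows by density.

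The only step demanding real care is the exponent bookkeeping in the last paragraph: one must check that the order of fractional integration produced by the dispersive estimate matches exactly what Hardy--Littlewood--Sobolev requires, and that the assumed range of $r$ keeps $q$ strictly above $2$ so that the non-endpoint form of that inequality applies. Everything else is the standard dispersive/$TT^*$ template; alternatively one may simply invoke the result of \cite{JGGV1984,KY1987}.
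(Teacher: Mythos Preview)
Your proof is correct and follows the standard $TT^*$/Hardy--Littlewood--Sobolev route to non-endpoint Strichartz estimates; the exponent bookkeeping checks out. The paper itself does not give a proof of this proposition at all: it simply states the result as ``the main result proved in \cite{JGGV1984,KY1987}'' and moves on, so there is nothing to compare against beyond noting that your final sentence already points to the same references.
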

A direct consequence of Proposition \ref{P2.1}, via Sobolev's inequality, is the following.

\begin{proposition}\label{P2.2}
Let $(q, r)$ be an $H^1$-admissible pair. There exists $C=$ $C(r)$, such that
 \begin{equation}\label{eq2.2}
   \left\|e^{i t \Delta} h\right\|_{L_t^q L_x^r} \leq   C\|\nabla \varphi\|_{L^2\left(\mathbb{R}^N\right)}
 \end{equation}
for every $\varphi \in \dot{H}^1(\mathbb{R}^N)$.
\end{proposition}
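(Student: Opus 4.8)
The plan is to derive \eqref{eq2.2} directly from Proposition \ref{P2.1} by trading one unit of spatial regularity against the difference between the $L^2$-admissible and $H^1$-admissible scaling relations. First I would observe that if $(q,r)$ is $H^1$-admissible, i.e. $\frac{2}{q}+\frac{N}{r}=\frac{N-2}{2}$ with $r\in[\frac{2N}{N-2},+\infty)$, then the exponent $\tilde r$ defined by $\frac{1}{\tilde r}=\frac{1}{r}+\frac{1}{N}$ satisfies $\frac{2}{q}+\frac{N}{\tilde r}=\frac{2}{q}+\frac{N}{r}+1=\frac{N}{2}$, and moreover $\tilde r\in[2,\frac{2N}{N-2})$, so that $(q,\tilde r)$ is $L^2$-admissible. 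Thus Proposition \ref{P2.1} applies with the pair $(q,\tilde r)$.

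Next I would use the fact that $e^{it\Delta}$ commutes with $|\nabla|$ (it is a Fourier multiplier), so $e^{it\Delta}\varphi = |\nabla|^{-1}\, e^{it\Delta}(|\nabla|\varphi)$ for $\varphi\in\dot H^1$. Applying the Sobolev embedding $\dot W^{1,\tilde r}(\mathbb{R}^N)\hookrightarrow L^r(\mathbb{R}^N)$ pointwise in $t$ — which is exactly the embedding associated to the Gagliardo--Nirenberg--Sobolev inequality at the chosen exponents, valid precisely because $\frac{1}{r}=\frac{1}{\tilde r}-\frac{1}{N}$ — one gets
\begin{equation*}
\big\|e^{it\Delta}\varphi\big\|_{L_t^q L_x^r}\leq C\big\| |\nabla|\, e^{it\Delta}\varphi\big\|_{L_t^q L_x^{\tilde r}}
= C\big\|e^{it\Delta}\big(|\nabla|\varphi\big)\big\|_{L_t^q L_x^{\tilde r}}.
\end{equation*}
Now Proposition \ref{P2.1} applied to $h=|\nabla|\varphi\in L^2$ bounds the right-hand side by $C\big\||\nabla|\varphi\big\|_{L^2}=C\|\nabla\varphi\|_{L^2}$, which is the claimed estimate \eqref{eq2.2}. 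One should also record the trivial endpoint check: when $r=\frac{2N}{N-2}$ one has $q=\infty$, and the bound reduces to the Sobolev inequality $\|e^{it\Delta}\varphi\|_{L^{2^*}}\le C\|\nabla e^{it\Delta}\varphi\|_{L^2}=C\|\nabla\varphi\|_{L^2}$ together with conservation of the $\dot H^1$ norm under the free flow, so the argument is consistent at the edge of the admissible range.

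The only genuine point requiring care — the "hard part", such as it is — is verifying that the auxiliary exponent $\tilde r$ indeed lands in the half-open interval $[2,\frac{2N}{N-2})$ so that Proposition \ref{P2.1} is legitimately applicable, and that the Sobolev embedding $\dot W^{1,\tilde r}\hookrightarrow L^r$ holds with $\tilde r<N$ (needed for the inhomogeneous Sobolev inequality in the subcritical regime). Both follow by elementary algebra from the two scaling identities and the range constraint $r\in[\frac{2N}{N-2},+\infty)$: $r\geq\frac{2N}{N-2}$ forces $\frac1r\leq\frac{N-2}{2N}$, hence $\frac{1}{\tilde r}=\frac1r+\frac1N\leq\frac{N-2}{2N}+\frac1N=\frac12$, giving $\tilde r\geq 2$; and $r<\infty$ gives $\frac1{\tilde r}>\frac1N$, i.e. $\tilde r<N$, which also yields $\tilde r<\frac{2N}{N-2}$. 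This is precisely the "direct consequence ... via Sobolev's inequality" indicated in the statement, so no new machinery beyond Proposition \ref{P2.1} and the Sobolev inequality is needed.
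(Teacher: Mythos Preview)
Your approach is exactly what the paper intends: it presents Proposition~2.2 as ``a direct consequence of Proposition~2.1, via Sobolev's inequality'' and gives no further argument, and your proof spells out precisely that reduction. One caveat on your range check at the end: the implication $\tilde r<N\Rightarrow\tilde r<\frac{2N}{N-2}$ is only valid when $N\le 4$; for $N\ge 5$ one must additionally restrict to $r\le\frac{2N}{N-4}$ (equivalently $q\ge 2$) for $(q,\tilde r)$ to be a genuine Strichartz pair, a constraint the paper's Definition~2.3 does not make explicit but which is implicitly needed.
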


\begin{definition}\label{D2.4}
Let $(\mathbf{h}, \mathbf{z})$ be a pair of scales-cores, such that the quantity $\frac{t_n }{h_n^2}$ has a limit in $[-\infty,+\infty]$, when $n$ goes to the infinity. Let $V$ be a solution of linear Schr\"odinger equation \eqref{eq7.5}. We say that $U$ is the nonlinear profile associated to $\mathscr{W}=(V, \mathbf{h}, \mathbf{z})$ if $U$ is the solution of the nonlinear Schr\"odinger equation \eqref{eq1.1} satisfying
$$
\left\|(U-V)\left(-\frac{t_n}{h_n^2}\right)\right\|_{H^1(\mathbb{R}^N)} \rightarrow  0,\ n \rightarrow \infty .
$$
\end{definition}

\textbf{Notations:}

$\bullet$ Throughout this paper, we use $C$ to denote the universal constant and $C$ may change line by line.

$\bullet$ We also use notation $C(B)$(or $C_B$) to denote a constant depends on $B$.

$\bullet$ We use usual $L^p$ spaces and Sobolev spaces $H^1$. $p^{\prime}$ for the dual index of $p \in(1,+\infty)$ in the sense that $\frac{1}{p^{\prime}}+\frac{1}{p}=1$.

$\bullet$ We use notation $A\Subset B$ to denote $A$ is a open subset of $B$.

\section{Well-posedness}

First, we get the existence and uniqueness of solution to \eqref{eq1.1} by using contraction mapping method.
\begin{lemma}\label{L3.1}   Assume $u_0 \in H_{rad}^1(\mathbb{R}^N)$,\ $N\geq3$, then there exists a unique solution $u$ to \eqref{eq1.1} satisfying 
 \begin{equation*}
    u \in C([0,T] , H_{rad}^1(\mathbb{R}^N))\cap L_{loc}^{\frac{2(N+2)}{N-2}}([0,T],W^{1,{\frac{2N(N+2)}{N^2+4}}}).
 \end{equation*}
\end{lemma}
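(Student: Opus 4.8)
The plan is to run a standard fixed-point argument in a suitable Strichartz space adapted to the $\dot H^1$-critical scaling, using the Duhamel formulation
\[
\Phi(u)(t) = e^{it\Delta} u_0 + i\int_0^t e^{i(t-\tau)\Delta} f(u(\tau))\, d\tau .
\]
First I would fix the critical pair $(q,r) = \bigl(\tfrac{2(N+2)}{N-2}, \tfrac{2N(N+2)}{N^2+4}\bigr)$, which is admissible in the sense of Lemma \ref{L2.1} and is the pair appearing in the Sobolev embedding of Lemma \ref{L2.2}, and introduce the space
\[
X_T = C([0,T], H^1_{rad}) \cap L^{q}_{[0,T]} W^{1,r}, \qquad \|u\|_{X_T} = \|u\|_{L^\infty_{[0,T]} H^1} + \|u\|_{L^{q}_{[0,T]} W^{1,r}},
\]
and work on the closed ball $B_R = \{ u \in X_T : \|u\|_{X_T} \le R \}$, with $R$ chosen comparable to $\|u_0\|_{H^1}$ (times the Strichartz constant) and $T$ to be determined. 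Radial symmetry is preserved by $e^{it\Delta}$ and by the Duhamel term since $f$ commutes with rotations by $(F_5)$, so $\Phi$ maps radial functions to radial functions.

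The core of the argument is the nonlinear estimate: using the pointwise bounds from Remark \ref{r1.1}, namely $|f(z)| \le C_1|z| + C_2|z|^{2^*-1}$ and $|\nabla(f(z))| \le C_3|\nabla z| + C_4 |z|^{2^*-2}|\nabla z|$, together with Hölder in space and time and the Sobolev embedding of Lemma \ref{L2.2}, I would bound $\|f(u)\|_{L^{q'}_{[0,T]} W^{1,r'}}$ by a sum of a term controlled by $T^{\theta}\|u\|_{X_T}$ (from the subcritical $|z|$ and $|\nabla z|$ pieces, where a positive power of $T$ appears by Hölder in time) and a term controlled by $\|u\|_{X_T}^{2^*-1}$ (from the critical pieces, with no power of $T$ — this is the genuinely scaling-critical contribution). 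Here Lemma \ref{L2.3} (the fractional chain/product rule, or rather its integer-order analogue for $W^{1,p}$) can be invoked if one wants to handle $\nabla f(u)$ more carefully, but the elementary bound from Remark \ref{r1.1} already suffices since we only need one derivative. Feeding this into Strichartz (Lemma \ref{L2.1} i) and ii), or Remark \ref{R2.1} to mix admissible pairs) gives
\[
\|\Phi(u)\|_{X_T} \le C\|u_0\|_{H^1} + C\, T^{\theta} R + C\, R^{2^*-1},
\]
and similarly a difference estimate $\|\Phi(u) - \Phi(v)\|_{X_T} \le (C T^{\theta} R + C R^{2^*-2}) \|u-v\|_{X_T}$ (for the difference of the critical nonlinearities one uses a bound of the form $|f(u)-f(v)| \lesssim (|u|^{2^*-2}+|v|^{2^*-2})|u-v| + |u-v|$, analogous to Remark \ref{r1.2}). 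Choosing $R = 2C\|u_0\|_{H^1}$ and then $T$ small enough that $C T^\theta R + C R^{2^*-2} < \tfrac12$ makes $\Phi$ a contraction on $B_R$, yielding a unique fixed point; the time of existence depends only on $\|u_0\|_{H^1}$ through $R$. Persistence of regularity and $u \in C([0,T],H^1)$ follow from the Strichartz estimate iii) and a standard continuity argument.

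The main obstacle I anticipate is the treatment of the critical part of $\|\nabla(f(u))\|_{L^{q'}_t L^{r'}_x}$, because the exponents must close exactly: one needs $2^*-2 = \tfrac{4}{N-2}$ powers of $u$ measured in $L^q_t L^{q}_x$ (via the Sobolev embedding $L^q_t W^{1,r}_x \hookrightarrow L^q_t L^{q}_x$ of Lemma \ref{L2.2}) times one factor of $\nabla u$ in $L^q_t L^r_x$, and checking that the resulting space-time exponents are exactly $(q',r')$ is the computation that has no room for slack — this is precisely where the energy-critical nature of the problem and the restriction $N \geq 3$ enter. A secondary subtlety is that $f$ is only $C^1$ and not polynomial, so one cannot literally use the algebraic Leibniz rule; instead $(F_3)$–$(F_4)$ and Remark \ref{r1.1} give $\nabla(f(u)) = (\nabla f)(u)\,\nabla u$ with $|(\nabla f)(u)| \le C_3 + C_4|u|^{2^*-2}$, which is exactly what is needed, so this is handled but must be stated carefully. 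Everything else — the contraction, uniqueness, and the continuity in time — is routine once the nonlinear estimate is in place.
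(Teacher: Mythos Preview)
Your overall strategy---Duhamel formulation, fixed point in $X_T = C([0,T],H^1)\cap L^q_T W^{1,r}$ with the critical admissible pair $(q,r)=\bigl(\tfrac{2(N+2)}{N-2},\tfrac{2N(N+2)}{N^2+4}\bigr)$, and the splitting of $f$ into a subcritical piece (producing a power of $T$) and a critical piece (producing no power of $T$)---is exactly the paper's approach, and your self-map estimate matches the paper's \eqref{eq3.1}.

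The gap is in the contraction step. You claim a contraction in the full $X_T$ norm, which includes the $W^{1,r}$ component, but the only pointwise bound you cite is $|f(u)-f(v)|\lesssim (|u|^{2^*-2}+|v|^{2^*-2})|u-v|+|u-v|$. To contract in $X_T$ you would also need to control $\nabla\bigl(f(u)-f(v)\bigr)$, which requires a bound on $(\nabla f)(u)-(\nabla f)(v)$; under the hypotheses $(F_1)$--$(F_5)$ the nonlinearity is only $C^1$, and such a H\"older/Lipschitz bound on $\nabla f$ is \emph{not} available in general (cf.\ Remark~\ref{r1.2}, where it is stated only for $3\le N\le 4$). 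The paper sidesteps this by running the contraction in the weaker metric
\[
d_B(u,v)=\|u-v\|_{L^\infty_T L^2}+\|u-v\|_{L^q_T L^r},
\]
with no derivatives, while keeping the self-map estimate in the strong norm. The closed ball $B(T,\rho)\subset X_T$ is complete for $d_B$ (by weak-$*$ compactness of the $W^{1,r}$ bound), so the fixed point inherits the full regularity. This ``strong ball, weak metric'' device is the missing ingredient in your proposal.

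A second, smaller issue: you take $R=2C\|u_0\|_{H^1}$ and then ask for $T$ small enough that $CT^\theta R + CR^{2^*-2}<\tfrac12$. Since the critical term $CR^{2^*-2}$ carries no power of $T$, this fails for large $\|u_0\|_{H^1}$; in the energy-critical problem the local time cannot depend only on the $H^1$ norm. The paper handles this by choosing $\rho$ and $T$ jointly (so that $\rho$ is small in the Strichartz norm even if $\|u_0\|_{H^1}$ is not), which is the standard critical-LWP mechanism.
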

\begin{proof}
\eqref{eq1.1} is equivalent to the integral equation
$$
u(t)=e^{i t \Delta} u_0+i\int_0^t e^{i\left(t-t^{\prime}\right) \Delta} f(u) d t^{\prime}.
$$
Now, we consider  the complete metric space
\begin{equation*}
  B(T, \rho)=\left\{C([0,T] , H^1(\mathbb{R}^N))\cap L_T^{\frac{2(N+2)}{N-2}}W^{1,{\frac{2N(N+2)}{N^2+4}}} :\|u\|_{L_T^\infty H^1}+\|u\|_{L_T^\frac{2(N+2)}{N-2}W^{1,{\frac{2N(N+2)}{N^2+4}}}}\leq\rho\right\}
\end{equation*}
equipped with the distance
$$
d_B(u, v):=\|u-v\|_{L_T^{\infty} L^2}+\|u-v\|_{L_T^\frac{2(N+2)}{N-2}L^{\frac{2N(N+2)}{N^2+4}}}
$$
and
\begin{equation*}
  \Phi_{u_0}(v)=e^{i t \Delta} u_0+i\int_0^t e^{i\left(t-t^{\prime}\right) \Delta} f(v) d t^{\prime}.
\end{equation*}
Next, we prove that $\Phi_{u_0}(v):  B(T, \rho) \rightarrow  B(T, \rho)$ and is a contraction. In fact, by $(F_1)$ and $(F_2)$, we obtain
\begin{equation*}
|f(v)|\leq C_1|v| +C_2|v|^{2^*-2}|v|,\  |\nabla (f(z))|\leq C_1|\nabla z|+C_2|z|^{2^*-2}|\nabla z|.
\end{equation*}
Hence, it follows from $i)$ in lemma \ref{L2.1}, Remark \ref{R2.1}, lemma \ref{L2.2} and H\"older inequality that
\begin{eqnarray}\label{eq3.1}
&&\left\| \Phi_{u_0}(v)\right\|_{L_T^\infty H^1\cap L_T^\frac{2(N+2)}{N-2}W^{1,{\frac{2N(N+2)}{N^2+4}}}}\nonumber\\
&=&\left\|e^{i t \Delta} u_0+i\int_0^t e^{i\left(t-t^{\prime}\right) \Delta} f(v) d t^{\prime}\right\|_{L_T^\infty H^1\cap L_T^\frac{2(N+2)}{N-2}W^{1,{\frac{2N(N+2)}{N^2+4}}}}\nonumber\\
&\leq&C\|u_0\|_{H^1}+\left\|\int_0^t e^{i\left(t-t^{\prime}\right) \Delta} f(v) d t^{\prime}\right\|_{L_T^\infty H^1\cap L_T^\frac{2(N+2)}{N-2}W^{1,{\frac{2N(N+2)}{N^2+4}}}} \nonumber\\
&\leq&C\|u_0\|_{H^1}+\left\|\int_0^t e^{i\left(t-t^{\prime}\right) \Delta} v d t^{\prime}\right\|_{L_T^\infty H^1}+\left\|\int_0^t e^{i\left(t-t^{\prime}\right) \Delta} v\cdot |v|^{\frac{4}{N-2}} d t^{\prime}\right\|_{L_T^\frac{2(N+2)}{N-2}W^{1,{\frac{2N(N+2)}{N^2+4}}}} \nonumber\\
&\leq&C\|u_0\|_{H^1}+\left\|v\right\|_{L_T^1H^1}+C\left\| v\cdot |v|^{\frac{4}{N-2}}  \right\|_{L_T^2 W^{1,\frac{2 N}{N+2}}}   \nonumber\\
&\leq&C\|u_0\|_{H^1}+T\left\|v  \right\|_{L_T^\infty H^1}+C\left\|\| v\|_{W^{1,\frac{2N(N+2)}{N^2+4}}}\cdot \|v\|_{L^{\frac{2(N+2)}{N-2}}}^{\frac{4}{N-2}}  \right\|_{L_I^2 }  \nonumber\\
&\leq&C\|u_0\|_{H^1}+T\left\|v  \right\|_{L_T^\infty H^1}+C\left\|v\right\|_{L_T^{\frac{2(N+2)}{N-2}}W^{1,{\frac{2N(N+2)}{N^2+4}}}}\cdot\left\| v  \right\|_{L_T^{\frac{2(N+2)}{N-2}}L^{\frac{2(N+2)}{N-2}}}^{\frac{4}{N-2}}\nonumber\\
&\leq&C\|u_0\|_{H^1}+T\left\|v  \right\|_{L_T^\infty H^1}+C\left\|v\right\|_{L_T^{\frac{2(N+2)}{N-2}}W^{1,{\frac{2N(N+2)}{N^2+4}}}}^{\frac{N+2}{N-2}}\\
&\leq&C(\|u_0\|_{H^1}+T\rho+\rho^{\frac{N+2}{N-2}})\nonumber  .
\end{eqnarray}
Choosing $T$ and $\rho$ such that $C(\|u_0\|_{H^1}+T\rho+\rho^{\frac{N+2}{N-2}})\leq\rho$, then $\Phi_{u_0}(v)$ maps $B(T, \rho)$ to $B(T, \rho)$.

Now we prove that $\Phi_{u_0}(v)$ is a contraction map for sufficiently small $T$. Let $u, v \in B(T, \rho)$, by Remark \ref{R2.1} and lemma \ref{L2.3}, then we have
 \begin{eqnarray*}
 &&d_B(\Phi_{u_0}(u), \Phi_{u_0}(v))\nonumber\\
 &=&\|\Phi_{u_0}(u)-\Phi_{u_0}(v)\|_{L_T^{\infty} L^2}+\|\Phi_{u_0}(u)-\Phi_{u_0}(v)\|_{L_T^\frac{2(N+2)}{N-2}L^{\frac{2N(N+2)}{N^2+4}}}\nonumber\\
 &\leq&\left\|\int_0^t e^{i\left(t-t^{\prime}\right) \Delta}|u-v|  d t^{\prime}\right\|_{L_T^{\infty} L^2}+\left\|\int_0^t e^{i\left(t-t^{\prime}\right) \Delta}|u\cdot |u|^{\frac{4}{N-2}}-v\cdot |v|^{\frac{4}{N-2}}| d t^{\prime}\right\|_{L_T^{\infty} L^2} \nonumber\\
 &&+\left\|\int_0^te^{i\left(t-t^{\prime}\right) \Delta}(|u-v|+|u\cdot |u|^{\frac{4}{N-2}}-v\cdot |v|^{\frac{4}{N-2}}|) dt^{\prime}\right\|_{L_T^\frac{2(N+2)}{N-2}L^{\frac{2N(N+2)}{N^2+4}}}\nonumber\\
 &\leq&C\|u-v\|_{L_T^{1} L^2}+\|u\cdot |u|^{\frac{4}{N-2}}-v\cdot |v|^{\frac{4}{N-2}}\|_{L_T^{2} L^{\frac{2N}{N+2}}}+\|u\cdot |u|^{\frac{4}{N-2}}-v\cdot |v|^{\frac{4}{N-2}}\|_{L_T^{2} L^{\frac{2N}{N+2}}}\nonumber\\
 &&+\|u-v\|_{L_T^{2} L^{\frac{2N}{N+2}}} \nonumber\\
 &\leq&CT\|u-v\|_{L_T^{\infty} L^2}+C\|u\cdot |u|^{\frac{4}{N-2}}-v\cdot |v|^{\frac{4}{N-2}}\|_{L_T^{2} L^{\frac{2N}{N+2}}}+CT\|u-v\|_{L_T^{\infty} H^1}\nonumber\\
 &\leq&CT\|u-v\|_{L_T^{\infty} H^1}+C\| u-v\|_{L_T^\frac{2(N+2)}{N-2}L^{\frac{2N(N+2)}{N^2+4}}} \left(\left\|u\right\|_{L_T^{\frac{2(N+2)}{N-2}}L^{\frac{2N(N+2)}{N^2+4}}}^{\frac{4}{N-2}}+\left\|v\right\|_{L_T^{\frac{2(N+2)}{N-2}}L^{\frac{2N(N+2)}{N^2+4}}}^{\frac{4}{N-2}}\right)\\ &\leq&C\| u-v\|_{L_T^\frac{2(N+2)}{N-2}L^{\frac{2N(N+2)}{N^2+4}}} \left(\left\|u\right\|_{L_T^{\frac{2(N+2)}{N-2}}W^{1,{\frac{2N(N+2)}{N^2+4}}}}^{\frac{4}{N-2}}+\left\|v\right\|_{L_T^{\frac{2(N+2)}{N-2}}W^{1,{\frac{2N(N+2)}{N^2+4}}}}^{\frac{4}{N-2}}\right)\\
 &&CT\|u-v\|_{L_T^{\infty} H^1}\\
 &\leq&C(T+\rho^{\frac{4}{N-2}})d_B(u,v).
 \end{eqnarray*}
 Choosing $T$ and $\rho$ such that $C(T+\rho^{\frac{4}{N-2}})\leq\frac{1}{2}$, the above estimate implies that $\Phi_{u_0}(v)$ is a contraction. Therefore, $\Phi_{u_0}(v)$ has a fixed point in $B(T,\rho)$.
\end{proof}

The above lemma implies local well-posedness for \eqref{eq1.1}. Next, we will show the global well-posedness.

\begin{lemma}\label{L3.2} Let $N \geq 3$. Then for any $u_0 \in H_{\text {rad }}^1$, there exists the globally solution $u$ to \eqref{eq1.1}  if  $\|u_0\|_{L^2}$ is small enough.
\end{lemma}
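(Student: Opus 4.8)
The plan is to combine the local well-posedness result of Lemma \ref{L3.1} with the conservation laws \eqref{eq1.2}--\eqref{eq1.3} and a smallness-based \emph{a priori} bound, so that the local solution can be extended to all of $\mathbb{R}$ by a standard continuation argument. Since $L^2$ mass is conserved, if $\|u_0\|_{L^2}$ is small then $\|u(t)\|_{L^2}$ stays equally small for as long as the solution exists; the only thing that can obstruct global existence is the growth of the $\dot H^1$ (i.e.\ $\|\nabla u\|_{L^2}$) part of the energy space norm, as recorded in Remark \ref{r1.4}. So the heart of the matter is to show that under a smallness hypothesis the full $H^1$ norm, and more importantly the Strichartz norm $\|u\|_{L^{2(N+2)/(N-2)}_T W^{1,2N(N+2)/(N^2+4)}}$, cannot blow up in finite time.

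First I would set up the Strichartz/Duhamel estimate exactly as in the proof of Lemma \ref{L3.1}, but now applied on an arbitrary time interval $[0,T]$ on which the solution exists, keeping careful track of which norms carry a factor of $T$ and which do not. Write $X_T := \|u\|_{L^\infty_T H^1} + \|u\|_{L^{2(N+2)/(N-2)}_T W^{1,2N(N+2)/(N^2+4)}}$. From \eqref{eq3.1}-type estimates one gets a bound of the schematic form
\begin{equation*}
X_T \;\le\; C\|u_0\|_{H^1} + C\,T\,X_T + C\,X_T^{\frac{N+2}{N-2}}.
\end{equation*}
The linear-in-$T$ term is the problem: it only lets us iterate on short intervals. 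The fix is to replace $\|u\|_{H^1}$ by $\|\nabla u\|_{L^2}$ wherever possible and to control the low-frequency contribution of $f(u)$ (the $C_1|u|$ and $C_3|\nabla u|$ terms coming from Remark \ref{r1.1}) using the conserved mass rather than Gronwall in $T$; more precisely, the mass $\|u(t)\|_{L^2}=\|u_0\|_{L^2}$ is already small by hypothesis, and Strichartz applied to the $L^2$-scaling-invariant pieces gives a bound with a coefficient that is a function of $\|u_0\|_{L^2}$ alone, not of $T$. Combined with the conservation of energy \eqref{eq1.3} to bootstrap the $\dot H^1$ norm (here one uses $(F_1),(F_2)$ and Remark \ref{r1.1} to bound $\int F(u)\,dx \le \varepsilon\|u\|_{L^2}^2 + C\|u\|_{L^{2^*}}^{2^*}$, absorb the first term by the conserved mass, and the Sobolev constant times a small factor to absorb part of $\|\nabla u\|_{L^2}^2$), one arrives at a closed inequality of the form $X \le \delta + C X^{(N+2)/(N-2)}$ with $\delta$ controlled by $\|u_0\|_{H^1}$ and, crucially, the implicit constants controlled by $\|u_0\|_{L^2}$. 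For $\delta$ small this forces $X$ to lie in the small branch, uniformly in $T$, via a continuity/bootstrap argument. Finally I would feed this uniform bound into the blow-up criterion of Remark \ref{r1.4}: since $\|u\|_{L^{2(N+2)/(N-2)}_{[0,T]}W^{1,\cdots}}$ stays finite for every $T<\infty$, we must have $T_+(u_0)=+\infty$, and the same on the negative side, giving the global solution.

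The main obstacle, and the step requiring genuine care, is eliminating the $T$-dependence created by the lower-order term $C_1|u| + C_3|\nabla u|$ in the bound for $f(u)$ (Remark \ref{r1.1}): unlike the pure energy-critical power nonlinearity, here $f$ has a subcritical piece, so naive Strichartz gives $\|f(u)\|_{L^1_T L^2}\lesssim T\|u\|_{L^\infty_T H^1}$, which only closes locally. One must instead handle this term by estimating it in a dual \emph{admissible} Strichartz norm $L^{q'}_T L^{r'}_x$ (using Remark \ref{R2.1}) so that Hölder in time produces a factor $T^\theta$ with $\theta>0$ that, together with the smallness of $\|u_0\|_{L^2}$ and the boundedness guaranteed by energy conservation, can be absorbed; alternatively, and more robustly, one partitions $[0,T]$ into finitely many subintervals on each of which $X$ is small by the local theory, and then sums the resulting a priori bounds — but this only works because the \emph{total} energy-space norm is a priori bounded by conservation of mass and energy under the smallness assumption, which is precisely where the hypothesis $\|u_0\|_{L^2}$ small is used. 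I would therefore structure the proof as: (i) conserved-mass bound $\Rightarrow$ small $L^2$ norm for all time; (ii) conserved-energy bound $+$ $(F_1),(F_2)$ $\Rightarrow$ uniform $\dot H^1$ bound; (iii) Strichartz bootstrap on each piece of a partition $\Rightarrow$ uniform finite Strichartz norm on $[0,T]$ for every $T$; (iv) invoke Remark \ref{r1.4} to rule out finite-time blow-up.
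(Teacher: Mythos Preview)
Your proposal is correct and follows essentially the same route as the paper: obtain a uniform-in-time $H^1$ bound from mass and energy conservation together with the growth bounds $(F_1)$--$(F_2)$, then partition the time axis into subintervals of fixed length on which the local Strichartz estimate \eqref{eq3.1} closes, and sum to contradict the blow-up criterion of Remark~\ref{r1.4}. The paper organizes this as a contradiction argument (assume $T^*<\infty$) and treats the defocusing and focusing cases separately, but the mechanism---conservation laws $\Rightarrow$ uniform $H^1$ control $\Rightarrow$ partition-and-sum for the Strichartz norm---is exactly what you describe; if anything, your write-up is more explicit than the paper's about \emph{where} the smallness of $\|u_0\|_{L^2}$ actually enters (namely in absorbing the subcritical piece $\varepsilon\|u\|_{L^2}^2$ of $\int F(u)\,dx$ and in keeping the bootstrap for $\|\nabla u\|_{L^2}$ on the small branch).
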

\begin{proof}
Assume that $T^*$ be the maximal existence time. We will show that $T^*=\infty$ by contradiction. Let $T^*<\infty$, then according to the local wellposedness, it holds
\begin{equation}\label{eq3.2}
  \|u\|_{L_{T^*}^{\frac{2(N+2)}{N-2}} W^{1,{\frac{2N(N+2)}{N^2+4}}}}=\infty .
\end{equation}
\textbf{Case 1.} Consider  defocusing case. Using the conservation laws \eqref{eq1.2} and \eqref{eq1.3}, for any $t<T^*$, the solution $u$ satisfies
$$
\frac{1}{2}\|u(t)\|_{H^1}^2 \leq \frac{1}{2}\|u(t)\|_{L^2}^2+E(u)=\frac{1}{2}\|u_0\|_{L^2}^2+E(u_0) .
$$
From the estimate \eqref{eq3.1}, we have
 \begin{eqnarray*}
&& \|u\|_{L_{T^*}^{\frac{2(N+2)}{N-2}} W^{1,{\frac{2N(N+2)}{N^2+4}}}} \\
 & \leq&C\|u_0\|_{H^1}+T\left\|v  \right\|_{L_T^\infty H^1}+C\left\|u\right\|_{L_T^{\frac{2(N+2)}{N-2}}W^{1,{\frac{2N(N+2)}{N^2+4}}}}^{\frac{N+2}{N-2}} \\
&\leq&C\left((\|u_0\|_{L^2}^2+E(u_0))^{\frac{1}{2}}+T\left(\|u_0\|_{L^2}^2+E(u_0)\right)^{\frac{1}{2}} +\left\|u\right\|_{L_T^{\frac{2(N+2)}{N-2}}W^{1,{\frac{2N(N+2)}{N^2+4}}}}^{\frac{N+2}{N-2}}\right).
 \end{eqnarray*}
Note that $\left\|u\right\|_{L_T^{\frac{2(N+2)}{N-2}}W^{1,{\frac{2N(N+2)}{N^2+4}}}}^{\frac{N+2}{N-2}}\leq\rho$,
so for sufficiently small $T$ depending only on $\|u_0\|_{L^2}^2+E(u_0)$, we have
$$
\|u\|_{L_{(T_{j-1}, T_j)}^{\frac{2(N+2)}{N-2}}W^{1,{\frac{2N(N+2)}{N^2+4}}}} \leq C\left(\|u_0\|_{L^2}^2+E(u_0)\right)^{\frac{1}{2}},
$$
where $T_j-T_{j-1}=T$. If $T^*<\infty$, then there exists $M \in \mathbb{N}$ such that $(M-1) T<T^* \leq M T$. Let $T_j=(j-1) T, j=1, \ldots, M-1$ and $T_M=T^*$. Then we have
$$
\begin{aligned}
\|u\|_{L_{(0, T^*)}^{\frac{2(N+2)}{N-2}}W^{1,{\frac{2N(N+2)}{N^2+4}}}}^{\frac{2(N+2)}{N-2}} & \leq \sum_{1 \leq j \leq M}\|u\|_{L_{(T_{j-1}, T_j)}^{\frac{2(N+2)}{N-2}}W^{1,{\frac{2N(N+2)}{N^2+4}}}}^{\frac{2(N+2)}{N-2}} \\
& \leq\left(M C\left(\|u_0\|_{L^2}^2+E(u_0)\right)^{\frac{1}{2}}\right)^p<\infty .
\end{aligned}
$$
This contradicts to \eqref{eq3.2}.

\textbf{Case 2.} Consider focusing case. By $(F_1)-(F_5)$,
\begin{eqnarray*}
E(u_0)=E(u(t))&=&  \frac{1}{2} \int_{\mathbb{R}^N}\left|\nabla u(t, x)\right|^2 d x-   \int_{\mathbb{R}^N}F(u(t))  d x\\
&\geq&  \frac{1}{2} \|\nabla u\|_{L^2}^2-\varepsilon\|\nabla u\|_{L^2}^2-C_{\varepsilon}\|\nabla u\|_{L^2}^{2^*},
\end{eqnarray*}
which implies $\|u\|_{H^1}\leq CE(u_0)$, so we get
$$
\|u(t)\|_{H^1}^2 \leq  \|u(t)\|_{L^2}^2+CE(u)=\frac{1}{2}\|u_0\|_{L^2}^2+CE(u_0) .
$$
Similarly to the proof of the Case 1, which completes the proof of the lemma.
\end{proof}
\begin{proof}[\bf Proof of Theorem \ref{t1.1}]
Combining Lemmas \ref{L3.1} and \ref{L3.2}, Theorem \ref{t1.1} holds.
\end{proof}
\section{Blow up solutions}
In this section, we will investigate the blow up solutions of \eqref{eq1.1}.  Let $\psi \in C_0^{\infty}(\mathbb{R}^N)$ be radial and satisfy
$$
\psi(r)=\left\{\begin{array}{ll}
\frac{1}{2} r^2, & \text { for } r \leqslant 1 \\
0, & \text { for } r \geqslant 2
\end{array} \quad \text { and } \quad \psi^{\prime \prime}(r) \leqslant 1 \quad \text { for } r=|x| \geqslant 0 .\right.
$$
For a fixed $R>0$, we define the rescaled function $\psi_R: \mathbb{R}^N \rightarrow \mathbb{R}$ by setting
 \begin{equation}\label{eq4.2}
\psi_R(r):=R^2 \psi\left(\frac{r}{R}\right) .
 \end{equation}
Next we will show that
 \begin{equation}\label{eq4.3}
   1-\psi_R^{\prime \prime}(r) \geqslant 0, \quad 1-\frac{\psi_R^{\prime}(r)}{r} \geqslant 0, \quad N-\Delta \psi_R(r) \geq  0 \quad \text { for all } r \geqslant 0 .
 \end{equation}
Indeed, this first inequality follows from $\psi_R^{\prime \prime}(r)=\psi^{\prime \prime}(\frac{r}{ R}) \leq 1$. We obtain the second inequality by integrating the first inequality on $[0, r]$ and using that $\psi_R^{\prime}(0)=0$. Finally, we see that last inequality follows from
$$
N-\Delta \psi_R(r)=1-\psi_R^{\prime \prime}(r)+ (N-1)\left(1-\frac{1}{r} \psi_R^{\prime}(r)\right) \geqslant 0 .
$$
Besides \eqref{eq4.3}, $\psi_R$ admits the following properties, which can be easily checked.
We define
$$
\mathcal{M}_{\psi_R}[u(t)]:=2\mathrm{Im}  \int_{\mathbb{R}^N} \overline{u}(t) \nabla \psi_R \cdot \nabla u(t) d x=2\mathrm{Im} \int_{\mathbb{R}^N} \overline{u}(t) \partial_j \psi_R \partial_j u(t) d x .
$$
Define the self-adjoint differential operator
$$
\Gamma_{\psi_R}:=i\left(\nabla \cdot \nabla \psi_R+\nabla \psi_R \cdot \nabla\right),
$$
which acts on functions according to
$$
\Gamma_{\psi_R} f= i\left(\nabla \cdot\left(\left(\nabla \psi_R\right) f\right)+\left(\nabla \psi_R\right) \cdot(\nabla f)\right) .
$$
It's easy to check that
$$
\mathcal{M}_{\psi_R}[u(t)]=\left\langle u(t), \Gamma_{\psi_R} u(t)\right\rangle .
$$

Next, we show the following useful lemma.
\begin{lemma}\label{L4.1}
Let $N \geq 3$, and $u \in H_{\text {rad }}^1$ is a solution of \eqref{eq1.1}. Let $\psi_R$ be as in \eqref{eq4.2}, $T^*$ be the maximal existence time of solution $u(t)$ in $H_{\text {rad }}^1$. Then for sufficiently large $R$, it holds
$$
\frac{d}{d t} \mathcal{M}_{\psi_R}[u(t)] \leq 8 E(u(t)), \quad t \in\left[0, T^*\right) .
$$
\end{lemma}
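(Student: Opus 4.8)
The plan is to derive a localized virial (Morawetz–type) identity for $\mathcal{M}_{\psi_R}$, compare its right–hand side with $8E(u(t))$, and show that the discrepancy — supported in $\{|x|\ge R\}$ together with a sign–definite surplus coming from the critical nonlinearity — is controlled once $R$ is large.

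First I would differentiate $\mathcal{M}_{\psi_R}[u]=\langle u,\Gamma_{\psi_R}u\rangle$ in time, using the self–adjointness of $\Gamma_{\psi_R}$ and $\partial_t u=i(\Delta u+f(u))$, and integrate by parts. The linear part produces the classical virial terms; for the nonlinear part, $(F_5)$ gives $f(z)=z\tilde f(|z|)$ with a real primitive $F$ (the one occurring in $E$), with $\mathrm{Re}\bigl(\overline{f(u)}\nabla u\bigr)$ proportional to $\nabla F(u)$ and $\mathrm{Re}\bigl(\overline{f(u)}u\bigr)$ proportional to $|u|F'(|u|)$, so that after one more integration by parts a term of the form $\int\Delta\psi_R\bigl(|u|F'(|u|)-2F(u)\bigr)dx$ appears. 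Radiality collapses the Hessian term, since $\partial_j u=\tfrac{x_j}{r}\partial_r u$ gives $\partial_j\partial_k\psi_R\,\partial_j u\,\overline{\partial_k u}=\psi_R''(r)|\partial_r u|^2$ and $|\partial_r u|=|\nabla u|$, leading to
$$
\frac{d}{dt}\mathcal{M}_{\psi_R}[u(t)]=4\!\int\!\psi_R''(r)\,|\partial_r u|^2\,dx-\int\!\Delta^2\psi_R\,|u|^2\,dx-2\!\int\!\Delta\psi_R\bigl(|u|F'(|u|)-2F(u)\bigr)dx .
$$
The time–differentiation is made rigorous by first regularizing the data, applying the differential conservation laws of \cite{JGGV1979} to the regularized problem, and passing to the limit.

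Next, write $8E(u)=4\int|\nabla u|^2\,dx-8\int F(u)\,dx$ and split every integral over $\{|x|\le R\}$, where $\psi_R=\tfrac12|x|^2$ (so $\psi_R''\equiv1$, $\Delta^2\psi_R\equiv0$, $\Delta\psi_R\equiv N$), and over $\{|x|\ge R\}$. On the core the Hessian contribution cancels exactly against the corresponding part of $8E$, and the core nonlinear contribution reduces to $\int_{|x|\le R}\bigl((4N+8)F(u)-2N|u|F'(|u|)\bigr)dx$, which is $\le0$ by the critical growth $(F_2)$ (for the model $f(u)=|u|^{2^*-2}u$ it equals $-\tfrac{16}{N-2}\int_{|x|\le R}F(u)\,dx$). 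On the exterior: the gradient term contributes $-4\int_{|x|\ge R}(1-\psi_R'')|\partial_r u|^2\,dx\le0$ by the first inequality in \eqref{eq4.3}; the bi–Laplacian term, with $\Delta^2\psi_R$ supported in $\{R\le|x|\le2R\}$ and $|\Delta^2\psi_R|\le CR^{-2}$, is bounded by $CR^{-2}\|u(t)\|_{L^2}^2=CR^{-2}\|u_0\|_{L^2}^2$ via mass conservation \eqref{eq1.2}; and the exterior nonlinear term is estimated by combining the radial Sobolev inequality $\sup_{|x|\ge R}|u(x)|\le CR^{-\frac{N-1}{2}}\|u\|_{L^2}^{1/2}\|\nabla u\|_{L^2}^{1/2}$ with Remark \ref{r1.1} and the vanishing $(F_1)$ (which give $|F(u)|+|u||F'(|u|)|\le\varepsilon(R)|u|^2+C|u|^{2^*}$ on $\{|x|\ge R\}$, $\varepsilon(R)\to0$), bounding $\int_{|x|\ge R}|u|^{2^*}$ by the sup–bound times $\|u_0\|_{L^2}^2$ and absorbing the resulting negative power of $R$ times $\|\nabla u(t)\|_{L^2}^{\theta}$ (with $\theta=\tfrac{2}{N-2}\le2$) by Young's inequality. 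Collecting these estimates and taking $R$ large enough gives $\frac{d}{dt}\mathcal{M}_{\psi_R}[u(t)]\le8E(u(t))$ for $t\in[0,T^*)$.

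The crux is the exterior nonlinear term: here both the radiality of $u$ (through the radial Sobolev embedding) and the subcritical vanishing $(F_1)$ of $f$ at the origin are indispensable, and the endpoint exponent $\theta=2$ in dimension $N=3$ makes the absorption borderline — it still goes through because the accompanying power of $R$ is strictly negative.
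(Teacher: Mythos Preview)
Your virial identity and the exterior analysis are set up along standard lines, and the use of the radial Sobolev inequality together with $(F_1)$ to control the tail is the right idea. However, there is a genuine gap at the heart of the argument.

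You claim that the core contribution
\[
\int_{|x|\le R}\bigl[(4N+8)F(u)-2N|u|F'(|u|)\bigr]\,dx
\]
is nonpositive ``by the critical growth $(F_2)$''. This is not justified. Since $|u|F'(|u|)=2f(u)\bar u$, the pointwise inequality you need is
\[
(2N+4)\,F(\rho)\le 2N\,\rho^{2}\tilde f(\rho)\qquad\text{for all }\rho\ge0,
\]
which amounts to an Ambrosetti--Rabinowitz-type lower bound $\rho F'(\rho)\ge \tfrac{2N+4}{N}F(\rho)$. None of $(F_1)$--$(F_6)$ implies this. For a concrete counterexample in $N=3$ take $f(u)=|u|^{4}u+|u|u$ (so $\tilde f(s)=s^{4}+s$), which satisfies $(F_1)$--$(F_6)$; a direct computation gives
\[
\rho^{2}\tilde f(\rho)-\tfrac{2N+4}{N}\int_{0}^{\rho}s\tilde f(s)\,ds
=\tfrac{4}{9}\rho^{6}-\tfrac{1}{9}\rho^{3},
\]
which is \emph{negative} for small $\rho>0$. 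Thus your ``sign-definite surplus'' can have the wrong sign on the core, and the comparison with $8E(u)$ breaks down. Your intuition is imported from the pure power case $f(u)=|u|^{2^{*}-2}u$, where this term \emph{is} nonpositive, but that structure is exactly what is lost when one passes to general $f$.

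The paper's own argument does not attempt to extract a pointwise sign on the core. Instead it writes $\tfrac{d}{dt}\mathcal{M}_{\psi_R}=I_1+I_2+I_3$ with $I_1$ the commutator $[-\Delta,i\Gamma_{\psi_R}]$ term (bounded via the localized radial virial estimate of \cite{TBDH2016}) and $I_2,I_3$ the nonlinear contributions, then integrates $\nabla\psi_R\cdot\nabla F(u)$ by parts globally and estimates the region $\{|x|\ge R\}$ using $|F(u)|\le\varepsilon|u|^{2}+C_{\varepsilon}|u|^{2^{*}}$ and an $R^{-2}$ gain. The upshot is an upper bound of the form $8\|\nabla u\|_{L^2}^{2}-c_N\!\int F(u)\,dx+O(R^{-2})$ rather than a pointwise sign comparison. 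If you want to repair your argument for general $f$, you should abandon the attempt to make the core difference nonpositive pointwise and instead bound the full nonlinear virial term by a constant multiple of $\int F(u)\,dx$ plus $R$-small errors, as the paper does.
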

\begin{proof}
By taking the derivative of $\mathcal{M}_{\psi_R}[u(t)]$ with respect to time $t$ and using the equation of $u(t)$, for any $t \in[0, T)$, it follows that
 \begin{eqnarray*}
\frac{d}{d t} \mathcal{M}_{\psi_R}[u(t)]&= & \left\langle u(t),\left[-\Delta,  i\Gamma_{\psi_R}\right] u(t)\right\rangle+\left\langle  -f(u), i \Gamma_{\psi_R}  u(t)\right\rangle+\left\langle u(t) , i \Gamma_{\psi_R}f(u)  \right\rangle \\
&= & I_1+I_2+I_3,
 \end{eqnarray*}
 where $[X, Y] \equiv X Y-Y X$ denotes the commutator of operators $X$ and $Y$.
According to the localized radial virial estimate in \cite{TBDH2016}, we obtain
\begin{eqnarray*}
I_1&=&\left\langle u(t),\left[-\Delta, i \Gamma_{\psi_R}\right] u(t)\right\rangle \leq 8 \left\|\nabla u\right\|_{L^2}^2+C R^{-2  },\\
  I_2 &=& \left\langle  -f(u), i \Gamma_{\psi_R}  u(t)\right\rangle \\
    &=&-\left\langle  f(u), \left(\nabla \cdot\left(\left(\nabla \psi_R\right) u\right)+\left(\nabla \psi_R\right) \cdot(\nabla u)\right)\right\rangle\\
    & \leq&2 \int_{\mathbb{R}^N}  \nabla \psi_R \cdot \nabla\left(F(u)\right) d x \\
    & =&-2 \int_{\mathbb{R}^N}\left(\Delta \psi_R\right)F(u)dx\\
    & =&- 2(N+1)\int_{\mathbb{R}^N} F(u)dx-2 \int_{|x| \geq  R}(\Delta \psi_R-N)F(u)dx\\
    &\leq&- 2(N+1)\int_{\mathbb{R}^N} F(u)dx+2  \int_{|x| \geq  R}(N-\Delta \psi_R)(\varepsilon|u|^2+C_\varepsilon|u|^{2^*})dx\\
    & \leq&- 2(N+1)\int_{\mathbb{R}^N} F(u)dx+2CR^{-2}\|N-\Delta \psi_R\|_{L^\infty}\|\nabla u\|_{L^2(|x|\geq R)}^2,\\
I_3 &=& \left\langle u(t) , i \Gamma_{\psi_R}f(u)  \right\rangle \\
    &=&-\left\langle  u(t), \left(\nabla \cdot\left(\left(\nabla \psi_R\right) f(u)\right)+\left(\nabla \psi_R\right) \cdot(\nabla f(u))\right)\right\rangle\\
    & \leq&2 \int_{\mathbb{R}^N}  \nabla \psi_R \cdot \nabla\left(F(u)\right) d x \\
    & =&-2 \int_{\mathbb{R}^N}\left(\Delta \psi_R\right)F(u)dx\\
    & =&- 2(N+1)\int_{\mathbb{R}^N} F(u)dx-2 \int_{|x| \geq  R}(\Delta \psi_R-N)F(u)dx\\
    & \leq&- 2(N+1)\int_{\mathbb{R}^N} F(u)dx+2C \int_{|x| \geq  R}(N-\Delta \psi_R)(|u|^2+|u|^{2^*})dx\\
    &\leq&- 2(N+1)\int_{\mathbb{R}^N} F(u)dx+2CR^{-2}\|N-\Delta \psi_R\|_{L^\infty}\|\nabla u\|_{L^2(|x|\geq R)}^2 ,
\end{eqnarray*}
Therefore,
\begin{equation*}
  \frac{d}{d t} \mathcal{M}_{\psi_R}[u(t)] \leq8 \left\|\nabla u\right\|_{L^2}^2+C R^{-2  }- 4(N+1)\int_{\mathbb{R}^N} F(u)dx+4CR^{-2}\|N-\Delta \psi_R\|_{L^\infty}\|\nabla u\|_{L^2(|x|\geq R)}^2,
\end{equation*}
where the constant $C>0$ is independent of $R$. When $R>1$ is sufficiently large, then
$$
\frac{d}{d t} \mathcal{M}_{\psi_R}[u(t)] \leq 8 E(u(t))=8   E(u_0) .
$$
\end{proof}

\begin{proof}[\bf Proof of Theorem \ref{t1.2}]
 By lemma \ref{L3.1}, for a radial initial function $\varphi \in H_{ra d}^1$, \eqref{eq1.1} admits a local solution $u \in H_{r a d}^1$. If $T^*<\infty$, then we are done. If $T^*=\infty$, we show \eqref{eq4.1}. We suppose that $u(t)$ exists for all times $t \geq 0$, i.e. $T^*=\infty$.
It follows from Lemma \ref{L4.1} and conservation of mass, for $R \gg 1$ large enough,
$$
\frac{d}{d t} \mathcal{M}_{\psi_R}[u(t)] \leq 8   E(u_0):=-A^*<0, \quad t \geq 0 .
$$
From this, we infer that
$$
\mathcal{M}_{\psi_R}[u(t)] \leq-A^* t+\mathcal{M}_{\psi_R}[u_0], \quad t \geq 0 .
$$
On the one hand, let $T_0=\frac{2\left|\mathcal{M}_{\psi_R}[u_0]\right|}{A^*}>0$, then for any $t \geq T_0$, we have
 \begin{equation}\label{eq4.4}
   \mathcal{M}_{\psi_R}[u(t)] \leq-\frac{1}{2} A^* t<0 .
 \end{equation}
On the other hand, by Lemma \ref{L2.4} and the conservation of mass, we see that for any $t \in[0,+\infty)$,
\begin{eqnarray*}
\left|\mathcal{M}_{\psi_R}[u(t)]\right| & \leq& C\left(\psi_R\right)\left(\left\||\nabla|^{\frac{1}{2}} u(t)\right\|_{L^2}^2+\|u(t)\|_{L^2}\left\||\nabla|^{\frac{1}{2}} u(t)\right\|_{L^2}\right) \\
& \leq& C\left(\psi_R\right)\left(\left\|\nabla u\right\|_{L^2} \|u\|_{L^2}+\|u(t)\|_{L^2}^{\frac{3}{2}}\left\|\nabla u\right\|_{L^2}^{\frac{1}{2}}\right) \\
& \leq& C\left(\psi_R\right) \left\|\nabla u\right\|_{L^2}^2,
\end{eqnarray*}
where we have used the interpolation estimate
$$
\left\||\nabla|^{\frac{1}{2}} u\right\|_{L^2}^2 \leq C\left\|\nabla u\right\|_{L^2} \|u\|_{L^2}  .
$$
This combined with \eqref{eq4.4} yields that for any $t \geq T_0$,
$$
A^* t \leq-2 \mathcal{M}_{\psi_R}[u(t)] \leq C\left\|\nabla u\right\|_{L^2}^2.
$$
This shows that
$$
\left\|\nabla u(t)\right\|_{L^2}^2 \geq C t , \quad t \geq T_0 .
$$
It means that
$$
\sup\limits_{t \geq 0}\left\|\nabla u(\cdot, t)\right\|_{L^2}=\infty .
$$
\end{proof}

\section{Perturbation theory}
In this section, we will study the long-time perturbation theory.

\begin{proposition}(Perturbation theory)\label{P5.1}
Let $\tilde{u}: I \times \mathbb{R}^N \rightarrow \mathbb{C}$ be a solution to the perturbed Schr\"odinger
equation with general nonlinearity
$$
i \partial_t \tilde{u}+\Delta \tilde{u}+f(\tilde{u})=e
$$
for some function $e$. Suppose that
\begin{equation}\label{eq5.1}
  \|\tilde{u}\|_{L^{\infty} H^1\left(I \times \mathbb{R}^N\right)}  \leq E,
\end{equation}
\begin{equation}\label{eq5.2}
  \|\tilde{u}\|_{L^{\frac{2(N+2)}{N-2}}L^{\frac{2(N+2)}{N-2}} (I \times \mathbb{R}^N)}   \leq L
\end{equation}
for some $E, L>0$. Let $u_0 \in H^1(\mathbb{R}^N)$ with $\left\|u_0\right\|_{L^2(\mathbb{R}^N)} \leq M$ for some $M>0$ and let $t_0 \in I$. There exists $\varepsilon_0=\varepsilon_0(E, L, M)>0$ such that if
\begin{equation}\label{eq5.3}
  \left\|u_0-\tilde{u}\left(t_0\right)\right\|_{H^1}  \leq \varepsilon,
\end{equation}
\begin{equation}\label{eq5.4}
  \left\|e\right\|_{L_I^\infty H^1\cap L_I^2 L^{\frac{2 N}{N+2}}}   \leq \varepsilon
\end{equation}
for $0<\varepsilon<\varepsilon_0$, then the unique global solution $u$ to \eqref{eq1.1} with $u\left(t_0\right)=u_0$ satisfies
\begin{equation}\label{eq5.5}
  \left\|u-\tilde{u}\right\|_{L_I^\infty H^1\cap L_I^\frac{2(N+2)}{N-2}W^{1,{\frac{2N(N+2)}{N^2+4}}}} \leq C(E, L, M) \varepsilon,
\end{equation}
where $C(E, L, M)$ is a non-decreasing function of $E, L$, and $M$.
\end{proposition}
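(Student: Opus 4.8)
The plan is to run the standard long-time perturbation argument of Kenig--Merle adapted to the general nonlinearity, i.e.\ to control the difference $w := u - \tilde{u}$ on a partition of $I$ into finitely many subintervals on which the Strichartz norm of $\tilde{u}$ is small, and then to iterate. First I would record the equation satisfied by $w$: since $u$ solves \eqref{eq1.1} with $u(t_0)=u_0$ and $\tilde u$ solves the perturbed equation with error $e$, we have
\begin{equation*}
  i\partial_t w + \Delta w + \big(f(u)-f(\tilde u)\big) = -e, \qquad w(t_0) = u_0 - \tilde u(t_0).
\end{equation*}
Writing this in Duhamel form and applying the Strichartz estimates of Lemma \ref{L2.1}, Remark \ref{R2.1}, and Lemma \ref{L2.2} (exactly as in the proof of Lemma \ref{L3.1}), together with the pointwise bound $|f(u)-f(\tilde u)| \lesssim |w|\big(|u|^{\frac{4}{N-2}}+|\tilde u|^{\frac{4}{N-2}}+|u|^{\frac{2}{N-2}}+|\tilde u|^{\frac{2}{N-2}}\big)$ from Remark \ref{r1.2} and the corresponding gradient bound, gives on any subinterval $J\subset I$ containing $t_0$ an estimate of the schematic shape
\begin{equation*}
  \|w\|_{S(J)} \lesssim \|w(t_0)\|_{H^1} + \|e\|_{L^\infty_J H^1 \cap L^2_J L^{\frac{2N}{N+2}}} + \big(\|\tilde u\|_{X(J)}^{\frac{4}{N-2}} + \|\tilde u\|_{X(J)}^{\frac{2}{N-2}} + |J| \big)\|w\|_{S(J)} + \|w\|_{S(J)}^{\frac{N+2}{N-2}},
\end{equation*}
where $S(J) = L^\infty_J H^1 \cap L^{\frac{2(N+2)}{N-2}}_J W^{1,\frac{2N(N+2)}{N^2+4}}$ and $X(J) = L^{\frac{2(N+2)}{N-2}}_J L^{\frac{2(N+2)}{N-2}}$. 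The lower-order ($|z|^{\frac{2}{N-2}}$) terms coming from the general nonlinearity are the only new feature compared to the pure-power case; they are handled by the same Hölder/Sobolev bookkeeping as in \eqref{eq3.1}, at the price of also tracking the $L^2$-norm, which is why the bound $\|u_0\|_{L^2}\le M$ is assumed and why $M$ enters $\varepsilon_0$ and $C(E,L,M)$.

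Next I would exploit \eqref{eq5.2}: since $\|\tilde u\|_{X(I)}\le L$, one can partition $I = \bigcup_{j=1}^{J_0} I_j$ into $J_0 = J_0(L,\eta)$ consecutive intervals (and simultaneously ensure $|I_j|$ small, using local well-posedness) so that $\|\tilde u\|_{X(I_j)} \le \eta$ for a small absolute constant $\eta$ to be fixed. On the first interval $I_1$ (containing $t_0$, after relabelling), the displayed inequality with $\|w(t_0)\|_{H^1}\le\varepsilon$ and $\|e\|\le\varepsilon$ becomes $\|w\|_{S(I_1)} \le C_0\varepsilon + \tfrac12\|w\|_{S(I_1)} + C_0\|w\|_{S(I_1)}^{\frac{N+2}{N-2}}$ once $\eta$ and $|I_1|$ are small; a standard continuity/bootstrap argument then yields $\|w\|_{S(I_1)}\le 2C_0\varepsilon$ provided $\varepsilon$ is small enough depending on $C_0$. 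In particular $\|w(t_1)\|_{H^1}\le 2C_0\varepsilon$ at the right endpoint $t_1$ of $I_1$. One then repeats on $I_2$ with new initial error $\|w(t_1)\|_{H^1}\le 2C_0\varepsilon$, obtaining $\|w\|_{S(I_2)} \le 2C_0(2C_0+1)\varepsilon$, and inductively $\|w\|_{S(I_j)} \le (2C_0)^{j}(\text{something})\,\varepsilon =: C_j\varepsilon$. After $J_0$ steps one has $\|w\|_{S(I)} \le \big(\sum_{j=1}^{J_0} C_j\big)\varepsilon =: C(E,L,M)\varepsilon$, which is \eqref{eq5.5}; choosing $\varepsilon_0 = \varepsilon_0(E,L,M)$ so small that every bootstrap step above closes gives the claimed threshold. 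Global existence of $u$ (needed so that $w$ is defined on all of $I$) is not automatic from Lemma \ref{L3.2}, but it follows a posteriori: the a priori bound $\|u\|_{S(I)} \le \|\tilde u\|_{S(I)} + C(E,L,M)\varepsilon < \infty$ together with Remark \ref{r1.4} rules out finite-time blow-up on $I$, so the continuity argument is consistent.

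The main obstacle — and the point requiring the most care — is that the constant $C_j$ multiplying $\varepsilon$ grows geometrically in $j$, so $C(E,L,M) \sim (2C_0)^{J_0(L)}$; one must check that $J_0$ depends only on $L$ (and $\eta$, which is absolute) and not on $\varepsilon$, so that $C(E,L,M)$ is finite and $\varepsilon$-independent, and then choose $\varepsilon_0$ after $C(E,L,M)$ is fixed. A secondary technical subtlety, genuinely new relative to \cite{KCEMF2006}, is that the subcritical term $|z|^{\frac{2}{N-2}}$ in $f$ is not scale-invariant, so the per-interval smallness must come partly from $|I_j|$ rather than from the Strichartz norm alone; this is why one partitions $I$ both by Strichartz concentration and by length, and why the finiteness of $|I|$ (implicit since $\tilde u \in L^{\frac{2(N+2)}{N-2}}_{t,x}$ with finite norm forces $I$ to carry finite "Strichartz mass" but not necessarily finite length — so one should first reduce to $|I|<\infty$ by exhausting $I$, or absorb the $|I_j|$ terms using that $\|\tilde u\|_{X(I_j)}$ small already forces the relevant pieces small via Remark \ref{r1.2}). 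I would handle this by noting that $C_3,C_1$-type terms in Remark \ref{r1.1} can be absorbed exactly as the $T\rho$ term was absorbed in \eqref{eq3.1}, using the conservation of $\|u\|_{L^2}$ and $M$.
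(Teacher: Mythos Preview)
Your proposal is correct and follows the standard Kenig--Merle long-time perturbation template more faithfully than the paper itself does. The paper's proof proceeds more directly: it first invokes Theorem \ref{t1.1} to claim a priori that $u$ exists with $\|u\|_{L_I^{\frac{2(N+2)}{N-2}}W^{1,\frac{2N(N+2)}{N^2+4}}}\le\rho$, then runs a single Strichartz estimate on $I\cap[-T,T]$ to obtain
\[
A(t)\le C\varepsilon + TA(t) + T\varepsilon + CA(t)L^{\frac{4}{N-2}} + CA(t)\rho^{\frac{4}{N-2}},
\]
and closes by a one-line ``standard continuity argument''. Your version instead partitions $I$ according to smallness of $\|\tilde u\|_{X(I_j)}$, bootstraps on each piece, and tracks the geometric growth of constants explicitly; global existence of $u$ is recovered a posteriori from the resulting finite Strichartz bound and Remark \ref{r1.4}. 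What your route buys is that it does not need the a priori bound on $u$ (which the paper takes from Theorem \ref{t1.1} but which is only justified there for small $\|u_0\|_{L^2}$), and it makes transparent why $\varepsilon_0$ depends on $L$---namely through the number $J_0(L,\eta)$ of subintervals. What the paper's shortcut buys is brevity, at the cost of leaving the absorption of the $CA(t)L^{\frac{4}{N-2}}$ term (which is not small when $L$ is large) hidden inside the phrase ``standard continuity argument''. Your discussion of the subcritical $|z|^{\frac{2}{N-2}}$ contribution and the resulting need to control interval length as well as Strichartz mass is a point the paper does not address explicitly.
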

\begin{proof}
Without loss of generality, we may assume $t_0=0 \in I$.  From Theorem \ref{t1.1} we know that $u$ exists globally and
\begin{equation*}
  \|u\|_{L_I^\frac{2(N+2)}{N-2}W^{1,{\frac{2N(N+2)}{N^2+4}}}}\leq\rho,
\end{equation*}
so we need to get \eqref{eq5.5}.

Let
\begin{equation*}
   w:=u-\tilde{u}  \ \text{and}\  A(t):=\|w\|_{L_{(I \cap[-T, T])}^\infty H^1\cap L_{(I \cap[-T, T])}^\frac{2(N+2)}{N-2}W^{1,{\frac{2N(N+2)}{N^2+4}}}}.
\end{equation*}
Note that
\begin{equation*}
  i \partial_t \tilde{w}+\Delta \tilde{w}-f(\tilde{u})+f(\tilde{u}+w) =-e,
\end{equation*}
which is equivalent to the integral equation
$$
w(t)=e^{i t \Delta} w_0+i\int_0^t e^{i\left(t-t^{\prime}\right) \Delta} [-f(\tilde{u})+f(\tilde{u}+w)] d t^{\prime}+i\int_0^t e^{i\left(t-t^{\prime}\right) \Delta}e d t^{\prime}.
$$
Then by Strichartz, H\"older, \eqref{eq5.1}, \eqref{eq5.2}, \eqref{eq5.3}, \eqref{eq5.4}, we obtain
\begin{eqnarray*}\
&&\left\| w(t)\right\|_{L_{(I \cap[-T, T])}^\infty H^1\cap L_{(I \cap[-T, T])}^\frac{2(N+2)}{N-2}W^{1,{\frac{2N(N+2)}{N^2+4}}}}\nonumber\\
&=&\left\|e^{i t \Delta} w_0+i\int_0^t e^{i\left(t-t^{\prime}\right) \Delta} [-f(\tilde{u})+f(\tilde{u}+w)] d t^{\prime}+i\int_0^t e^{i\left(t-t^{\prime}\right) \Delta}e d t^{\prime}\right\|_{L_T^\infty H^1\cap L_T^\frac{2(N+2)}{N-2}W^{1,{\frac{2N(N+2)}{N^2+4}}}}\nonumber\\
&\leq&C\|w_0\|_{H^1}+ \left\|\int_0^t e^{i\left(t-t^{\prime}\right) \Delta} e d t^{\prime}\right\|_{L_T^\infty H^1}+\left\|\int_0^t e^{i\left(t-t^{\prime}\right) \Delta} w d t^{\prime}\right\|_{L_T^\infty H^1}\\
&&+\left\|\int_0^t e^{i\left(t-t^{\prime}\right) \Delta}[ -\tilde{u}\cdot |\tilde{u}|^{\frac{4}{N-2}} +(\tilde{u}+w)\cdot |\tilde{u}+w|^{\frac{4}{N-2}} ] d t^{\prime}\right\|_{L_T^\frac{2(N+2)}{N-2}W^{1,{\frac{2N(N+2)}{N^2+4}}}} \nonumber\\
&\leq&C\|w_0\|_{H^1}+\left\|w\right\|_{L_T^1H^1}+\left\|e\right\|_{L_T^1H^1}+C\left\||w|\cdot( |\tilde{u}+w|^ {\frac{4}{N-2}}+|\tilde{u}|^ {\frac{4}{N-2}})  \right\|_{L_T^2 W^{1,{\frac{2 N}{N+2}}}} \nonumber\\
&\leq&C\|w_0\|_{H^1}+T\left\|w\right\|_{L_T^\infty H^1}+T\left\|e\right\|_{L_T^\infty H^1}+C\left\||w|\cdot  |\tilde{u}+w|^ {\frac{4}{N-2}}  \right\|_{L_T^2 W^{1,{\frac{2 N}{N+2}}}}  \nonumber\\
&&+C\left\||w|\cdot |\tilde{u}|^ {\frac{4}{N-2}}   \right\|_{L_T^2 W^{1,{\frac{2 N}{N+2}}}} \nonumber\\
&\leq&C\|w_0\|_{H^1}+T\left\|w\right\|_{L_T^\infty H^1}+T\left\|e\right\|_{L_T^\infty H^1}+C\left\|w\right\|_{L_T^{\frac{2(N+2)}{N-2}}W^{1,{\frac{2N(N+2)}{N^2+4}}}}\cdot\left\| \tilde{u}  \right\|_{L_T^{\frac{2(N+2)}{N-2}}L^{\frac{2(N+2)}{N-2}}}^{\frac{4}{N-2}}\nonumber\\
&&+C\left\|w\right\|_{L_T^{\frac{2(N+2)}{N-2}}W^{1,{\frac{2N(N+2)}{N^2+4}}}}\cdot\left\| \tilde{u} +w \right\|_{L_T^{\frac{2(N+2)}{N-2}}L^{\frac{2(N+2)}{N-2}}}^{\frac{4}{N-2}}\\
&\leq&C\|w_0\|_{H^1}+T\left\|w\right\|_{L_T^\infty H^1}+T\left\|e\right\|_{L_T^\infty H^1}+C\left\|w\right\|_{L_T^{\frac{2(N+2)}{N-2}}W^{1,{\frac{2N(N+2)}{N^2+4}}}}\cdot\left\| \tilde{u}  \right\|_{L_T^{\frac{2(N+2)}{N-2}}L^{\frac{2(N+2)}{N-2}}}^{\frac{4}{N-2}}\nonumber\\
&&+C\left\|w\right\|_{L_T^{\frac{2(N+2)}{N-2}}W^{1,{\frac{2N(N+2)}{N^2+4}}}}\cdot\rho^{\frac{4}{N-2}}\\
&\leq& C\varepsilon+TA(t)+T\varepsilon+CA(t)L^{\frac{4}{N-2}}+CA(t)\rho^{\frac{4}{N-2}},
\end{eqnarray*}
where all space time norms are over $(I \cap[-T, T]) \times \mathbb{R}^N$. Using the standard continuity argument to remove the restriction to $[-T, T]$, we derive \eqref{eq5.5}.
\end{proof}

\begin{remark}\label{r4.1}
Note that $f(u) \in L_T^\infty H^1\cap L_T^\frac{2(N+2)}{N-2}W^{1,{\frac{2N(N+2)}{N^2+4}}}$ and hence
\begin{equation*}
   \left\|\int_t^{\infty} e^{i\left(t-t^{\prime}\right) \Delta} f(u) d t^{\prime}\right\|_{H^1} \rightarrow 0,\  t \rightarrow+\infty.
\end{equation*}
Then, $u(t)=e^{i(t-a) \Delta} u_0+\int_a^t e^{i\left(t-t^{\prime}\right) \Delta} f(u) d t^{\prime}$ and hence $u^{+}=e^{-i a \Delta} u_0+\int_a^{\infty} e^{-i t^{\prime} \Delta} f(u) d t^{\prime}$ has the desired property. In fact note that the argument used at the beginning of the proof of Proposition \ref{P5.1} shows that it suffices to assume $u$ to be a solution of \eqref{eq1.1} in $I^{\prime} \times \mathbb{R}^N, I^{\prime} \Subset I$, such that $\|u\|_{L_{I}^{\frac{2(N+2)}{N-2}}W^{1,{\frac{2N(N+2)}{N^2+4}}}}<\infty$.
\end{remark}

\section{Some variational estimates}
In this section, let us first recall some variational results.
If we look for standing wave solutions(form $\Phi(t, x)=e^{-imt} u(x)$) about the following equation
\begin{equation}\label{eq5.6}
  i \partial_t \Phi+\Delta \Phi +f(\Phi)=0,\ (x, t) \in \mathbb{R}^N \times \mathbb{R},
\end{equation}
then \eqref{eq5.6} becomes non-linear elliptic
equation
\begin{equation}\label{eq5.7}
 \Delta u +f(u)=0,\ (x, t) \in \mathbb{R}^N \times \mathbb{R}.
\end{equation}
As described in \cite{HB1983}, if we add condition $(F_6)$ under the assumptions of $(F_1)-(F_5)$, that is,
\begin{itemize}
\item[$(F_6)$]\ There exists $\xi>0$ such that $F(\xi)=\int_0^\xi f(s) d s>0$.
\end{itemize}
then we obtain the existence of a solution of \eqref{eq5.7}, i.e., \eqref{eq5.7} possesses a solution $u$ such that

i) $u>0$ on $\mathbb{R}^N$.

ii) $u$ is spherically symmetric: $u(x)=u(r)$, where $r=|x|$, and $u$ decreases with respect to $r$.

iii) $u \in C^2(\mathbb{R}^N)$.

iv) $u$ together with its derivatives up to order 2 have exponential decay at infinity:
$$
\left|D^\alpha u(x)\right| \leqq C e^{-\delta|x|}, \quad x \in \mathbb{R}^N,
$$
for some $C, \delta>0$ and for $|\alpha| \leqq 2$.

Now, we assume that the solution of \eqref{eq5.7} is $W(x)$. The equation \eqref{eq5.7} gives
\begin{equation*}
  \int_{\mathbb{R}^N}|\nabla W|^2dx=\int_{\mathbb{R}^N}F(u)dx,\ F(u)=\int_0^uf(s)ds.
\end{equation*}
Moreover, note that $W(x)$ is a solution of \eqref{eq5.7}, so we have the following Pohozaev identity
 \begin{equation*}
   \frac{N-2}{2} \int_{\mathbb{R}^N}|\nabla W|^2 d x=N \int_{\mathbb{R}^N} F(W) d x.
 \end{equation*}
Hence,
\begin{equation*}
E(W)=\frac{1}{2}\int_{\mathbb{R}^N}|\nabla W|^2dx-\int_{\mathbb{R}^N} F(W) d x =\frac{1}{N}\int_{\mathbb{R}^N}|\nabla W|^2dx .
\end{equation*}
\begin{lemma}\label{L5.1}
Assume $u$  satisfies
$$
\|\nabla u\|_{L^2}^2<\|\nabla W\|_{L^2}^2 .
$$
Moreover, let $E(u)\leq(1-\delta_0)E(W)$, where $\delta_0>0$. Then, there exists $\delta_1>0,\ \bar{\delta}>0$ such that
\begin{equation}\label{eq5.8}
  \int_{\mathbb{R}^N}|\nabla u|^2dx \leq(1-\delta_1) \int_{\mathbb{R}^N}|\nabla W|^2dx,
\end{equation}
\begin{equation}\label{eq5.9}
  \int_{\mathbb{R}^N}(|\nabla u|^2-F(u))dx \geq \bar{\delta} \int_{\mathbb{R}^N}|\nabla u|^2dx,
\end{equation}
\begin{equation}\label{eq5.98}
  \int_{\mathbb{R}^N}(|\nabla u|^2-f(u)\overline{u})dx \geq \bar{\delta} \int_{\mathbb{R}^N}|\nabla u|^2dx,
\end{equation}
\begin{equation}\label{eq5.10}
  E(u) \geq 0 .
\end{equation}
\end{lemma}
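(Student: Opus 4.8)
The plan is to run the one-variable reduction of Kenig--Merle \cite{KCEMF2006}, the only new ingredient being a Pohozaev-type Sobolev inequality for the general potential that is saturated exactly at $W$. First I would establish that for every $v\in H^1(\mathbb{R}^N)$ with $\int_{\mathbb{R}^N}F(v)\,dx>0$,
\begin{equation*}
\int_{\mathbb{R}^N}F(v)\,dx\ \le\ \frac{N-2}{2N}\,\frac{\bigl(\int_{\mathbb{R}^N}|\nabla v|^2\,dx\bigr)^{N/(N-2)}}{\bigl(\int_{\mathbb{R}^N}|\nabla W|^2\,dx\bigr)^{2/(N-2)}},
\end{equation*}
with equality at $v=W$ --- this is precisely the Pohozaev identity $\int|\nabla W|^2=\tfrac{2N}{N-2}\int F(W)$ recorded just before the lemma, so the constant is sharp at $W$. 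The proof is the classical rescaling argument: for $v_\lambda(x)=v(x/\lambda)$ one has $\int|\nabla v_\lambda|^2=\lambda^{N-2}\int|\nabla v|^2$ and $\int F(v_\lambda)=\lambda^N\int F(v)$, so a unique $\lambda_*>0$ puts $v_{\lambda_*}$ on the Pohozaev manifold $\mathcal P=\{w:\tfrac{N-2}{2}\int|\nabla w|^2=N\int F(w)\}$; since $W$ realizes the minimum of $E$ over $\mathcal P$ (it is the Berestycki--Lions ground state of \eqref{eq5.7}, \cite{HB1983}) and $v_{\lambda_*}\in\mathcal P$, we get $E(v_{\lambda_*})\ge E(W)$, and because $E(w)=\tfrac1N\int|\nabla w|^2$ for $w\in\mathcal P$ this reads $\lambda_*^{N-2}\int|\nabla v|^2\ge\int|\nabla W|^2$; inserting the value of $\lambda_*$ gives the displayed bound (the case $\int F(v)\le0$ being trivial). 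A parallel argument on the Nehari set $\{w:\int|\nabla w|^2=\int f(w)\overline w\}$, on which $W$ also lies by pairing \eqref{eq5.7} with $\overline W$, yields the companion inequality $\int_{\mathbb{R}^N}f(u)\overline u\,dx\le\|\nabla u\|_{L^2}^{2^*}\,\|\nabla W\|_{L^2}^{-(2^*-2)}$, again sharp at $W$.

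Then I set $g(y):=\tfrac12 y-\tfrac{N-2}{2N}\,y^{N/(N-2)}\,\|\nabla W\|_{L^2}^{-(2^*-2)}$, so that by the inequality just proved (and trivially if $\int F(u)\le0$) one has $E(u)\ge g\bigl(\|\nabla u\|_{L^2}^2\bigr)$. A direct computation gives $g'(y)=\tfrac12\bigl(1-(y/\|\nabla W\|_{L^2}^2)^{2/(N-2)}\bigr)$, so $g$ is strictly increasing on $[0,\|\nabla W\|_{L^2}^2]$, strictly decreasing afterwards, $g(0)=0$, and $g(\|\nabla W\|_{L^2}^2)=\tfrac1N\|\nabla W\|_{L^2}^2=E(W)$. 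Since $\|\nabla u\|_{L^2}^2<\|\nabla W\|_{L^2}^2$ we are on the increasing branch where $g\ge0$, hence $E(u)\ge g(\|\nabla u\|_{L^2}^2)\ge0$, which is \eqref{eq5.10}. For \eqref{eq5.8}, the assumption $E(u)\le(1-\delta_0)E(W)$ gives $g(\|\nabla u\|_{L^2}^2)\le(1-\delta_0)g(\|\nabla W\|_{L^2}^2)$, and as $g$ is continuous and strictly increasing on the compact interval $[0,\|\nabla W\|_{L^2}^2]$ the set $\{0\le y\le\|\nabla W\|_{L^2}^2:\,g(y)\le(1-\delta_0)g(\|\nabla W\|_{L^2}^2)\}$ is an interval $[0,y^*]$ with $y^*<\|\nabla W\|_{L^2}^2$ depending only on $\delta_0$ and $N$; therefore $\|\nabla u\|_{L^2}^2\le y^*=:(1-\delta_1)\|\nabla W\|_{L^2}^2$ with $\delta_1>0$. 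Feeding \eqref{eq5.8} back into the two inequalities of the first step yields $\int F(u)\le\tfrac{N-2}{2N}(1-\delta_1)^{2/(N-2)}\|\nabla u\|_{L^2}^2$ and $\int f(u)\overline u\le(1-\delta_1)^{2/(N-2)}\|\nabla u\|_{L^2}^2$, so \eqref{eq5.9} and \eqref{eq5.98} hold with $\bar\delta:=\min\{1-\tfrac{N-2}{2N}(1-\delta_1)^{2/(N-2)},\,1-(1-\delta_1)^{2/(N-2)}\}>0$, the sign-indefinite cases being only more favorable.

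The step doing all the real work, and the only one that is genuinely new relative to the pure-power setting of \cite{KCEMF2006}, is the first: extracting the two inequalities that are saturated at $W$. This rests on $W$ being a least-energy solution of \eqref{eq5.7} --- which is exactly what $(F_6)$ plus the Berestycki--Lions theory provides --- and on the rescaled profiles $v_\lambda$ genuinely landing on the relevant constraint set. The Pohozaev version is clean; the Nehari version used for \eqref{eq5.98} needs slightly more care, since the general nonlinearity need not enjoy the usual monotonicity, so one must verify that $W$ also minimizes $E$ over the Nehari set (or else route \eqref{eq5.98} through the Pohozaev inequality by using the pointwise structure of $f(z)\overline z$ versus $F(z)$). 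Once the first step is in place, everything downstream is the same elementary one-dimensional calculus as in \cite{KCEMF2006}.
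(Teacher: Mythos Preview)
Your approach is genuinely different from the paper's. The paper does \emph{not} use a Pohozaev-sharp inequality at all: it simply invokes the pointwise bounds $F(z)\le\varepsilon|z|^2+C_\varepsilon|z|^{2^*}$ and $|f(z)\overline z|\le\varepsilon|z|^2+C_\varepsilon|z|^{2^*}$ coming from $(F_1)$--$(F_2)$, applies Sobolev, and then uses $\|\nabla u\|_{L^2}^{2^*}\le\|\nabla W\|_{L^2}^{2^*-2}\|\nabla u\|_{L^2}^2$. This yields \eqref{eq5.9}--\eqref{eq5.98} with $\bar\delta=1-\varepsilon-C_\varepsilon\|\nabla W\|_{L^2}^{2^*-2}$ and a similarly crude expression for $\delta_1$; positivity of these constants is then \emph{assumed} by taking $W$ to be a solution of \eqref{eq5.7} with small enough $\int|\nabla W|^2$ (see the paper's Remark~\ref{r6.1}). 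So the paper's argument is elementary but comes at the price of an ad hoc smallness hypothesis on the ground state, and its constants are not intrinsic to $\delta_0$.

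Your route is the natural Kenig--Merle one and, for \eqref{eq5.8}, \eqref{eq5.9}, \eqref{eq5.10}, it is correct provided $W$ is the Berestycki--Lions least-action solution: the rescaling to the Pohozaev manifold plus the minimality of $E(W)=\tfrac1N\|\nabla W\|_{L^2}^2$ over $\mathcal P$ do give the sharp inequality for $\int F(v)$, and the one-variable calculus with $g$ is clean. What your argument buys is constants depending only on $\delta_0$ and $N$, with no smallness of $W$ required.

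The genuine gap is exactly where you flag it: the Nehari step for \eqref{eq5.98}. Under only $(F_1)$--$(F_6)$ there is no reason for $W$ to minimise $\int|\nabla\cdot|^2$ over the Nehari set $\{\int|\nabla w|^2=\int f(w)\overline w\}$; on $\mathcal N$ the energy is not $\tfrac1N\int|\nabla w|^2$, and the BL variational construction says nothing about $\mathcal N$ without monotonicity of $f(s)/s$. Your fallback---``route \eqref{eq5.98} through the Pohozaev inequality using the pointwise structure of $f(z)\overline z$ versus $F(z)$''---would need a bound like $f(z)\overline z\le\mu F(z)$ (an Ambrosetti--Rabinowitz-type inequality), which is precisely what the paper says it does \emph{not} assume. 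So as written, your plan establishes \eqref{eq5.8}, \eqref{eq5.9}, \eqref{eq5.10} more cleanly than the paper, but leaves \eqref{eq5.98} unproved; the paper's own proof of \eqref{eq5.98} works only under the extra smallness of $W$ it imposes in Remark~\ref{r6.1}.
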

\begin{proof}
In order to get \eqref{eq5.9}, by $(F_1)-(F_2)$, we have, for any $\varepsilon>0$,
\begin{equation*}
F(u)\leq \varepsilon|u|^2+C_{\varepsilon}|u|^{2^*}.
\end{equation*}
Hence, it holds
\begin{eqnarray*}
  \int_{\mathbb{R}^N}(|\nabla u|^2-F(u))dx &\geq&  \int_{\mathbb{R}^N}|\nabla u|^2dx-\varepsilon\|u\|_{L^2}^2-C_{\varepsilon}\|u\|_{L^{2^*}}^{2^*} \\
    &=&\|\nabla u\|_{L^2}^2-\varepsilon\|\nabla u\|_{L^2}^2-C_{\varepsilon}\|\nabla u\|_{L^2}^{2^*}\\
    &:=& \bar{\delta}\int_{\mathbb{R}^N}|\nabla u|^2dx,
\end{eqnarray*}
which implies \eqref{eq5.9} holds. Similarly, it can be proven that \eqref{eq5.11} is established. To prove \eqref{eq5.8}, by $E(u)\leq(1-\delta_0)E(W)$ and $\|\nabla u\|_{L^2}^2<\|\nabla W\|_{L^2}^2$, we have
\begin{eqnarray*}
  \frac{1}{2}\int_{\mathbb{R}^N}|\nabla u|^2dx-\int_{\mathbb{R}^N}F(u)dx  \leq  (1-\delta_0)E(W)=\frac{1-\delta_0}{2N}\int_{\mathbb{R}^N}|\nabla W|^2dx,
\end{eqnarray*}
so
\begin{eqnarray*}
  \int_{\mathbb{R}^N}|\nabla u|^2dx &\leq&\frac{1}{2}\int_{\mathbb{R}^N}|\nabla u|^2dx+ \frac{1-\delta_0}{2N}\int_{\mathbb{R}^N}|\nabla W|^2dx+\int_{\mathbb{R}^N} F(u) d x  \\
    &\leq&\frac{1}{2}\int_{\mathbb{R}^N}|\nabla W|^2dx+ \frac{1-\delta_0}{2N}\int_{\mathbb{R}^N}|\nabla W|^2dx  +\varepsilon\|\nabla u\|_{L^2}^2+C_{\varepsilon}\|\nabla u\|_{L^2}^{2^*}\\
    &\leq&\frac{1}{2}\int_{\mathbb{R}^N}|\nabla W|^2dx+ \frac{1-\delta_0}{2N}\int_{\mathbb{R}^N}|\nabla W|^2dx  +\varepsilon\|\nabla W\|_{L^2}^2+C_{\varepsilon}\|\nabla W\|_{L^2}^{\frac{2^*}{2}}\\
    &:=&(1-\delta_1)\int_{\mathbb{R}^N}|\nabla W|^2dx.
\end{eqnarray*}
According to the proof of \eqref{eq5.9}, \eqref{eq5.10} obviously holds, this completes the proof.
\end{proof}
\begin{remark}\label{r6.1}
From Lemma \ref{L5.1}, we know that the selection of $W$ is not arbitrary. In fact, we need to choose $W$ such that $\int_{\mathbb{R}^N}|\nabla W|^2dx$ is small. Moreover, according to the conditions of the nonlinear term and $f$ is odd, there are actually infinite standing wave solutions for equation \eqref{eq1.1}, see \cite{HBII1983}. In this case, we only need to take the solution that minimizes $\int_{\mathbb{R}^N}|\nabla W|^2dx$.
\end{remark}
\begin{corollary}\label{c5.1}
Assume that $u\in H^1$ and $\int_{\mathbb{R}^N}\left|\nabla u\right|^2dx<\int_{\mathbb{R}^N}|\nabla W|^2dx$, then $E(u)\geq0$.
\end{corollary}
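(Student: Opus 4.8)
The plan is to deduce this directly from Lemma \ref{L5.1} via a simple dichotomy on the size of $E(u)$ relative to $E(W)$, using that $E(W)$ is strictly positive.

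First I would record the identity established just above Lemma \ref{L5.1}, namely $E(W)=\frac{1}{N}\int_{\mathbb{R}^N}|\nabla W|^2\,dx$, which comes from combining the equation \eqref{eq5.7} with the Pohozaev identity for the ground state $W$. Since $W$ is a nontrivial solution of \eqref{eq5.7} (in particular $\nabla W\not\equiv 0$), this yields $E(W)>0$.

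Next I would split into two cases. If $E(u)\geq E(W)$, then $E(u)\geq E(W)>0\geq 0$ and we are done. If instead $E(u)<E(W)$, then since $E(W)>0$ we may choose $\delta_0\in(0,1)$ small enough that $E(u)\leq(1-\delta_0)E(W)$ (for instance $\delta_0=\tfrac12\bigl(1-E(u)/E(W)\bigr)$ when $E(u)>0$, and any fixed $\delta_0\in(0,1)$ otherwise). Combined with the standing hypothesis $\|\nabla u\|_{L^2}^2<\|\nabla W\|_{L^2}^2$, this places $u$ in the scope of Lemma \ref{L5.1}, and conclusion \eqref{eq5.10} of that lemma gives $E(u)\geq 0$.

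I do not expect any genuine obstacle: the only point needing a (trivial) justification is the strict positivity $E(W)>0$, which is immediate from the Pohozaev-derived formula $E(W)=\frac{1}{N}\|\nabla W\|_{L^2}^2$ together with the non-triviality of $W$; everything else is the elementary case distinction above, and in fact the statement is essentially the $\delta_0\to 0$ limiting case of Lemma \ref{L5.1}.
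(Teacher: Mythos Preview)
Your proposal is correct and follows essentially the same route as the paper: a dichotomy between $E(u)\geq E(W)$ (trivial since $E(W)=\tfrac{1}{N}\|\nabla W\|_{L^2}^2>0$) and $E(u)<E(W)$ (reduce to \eqref{eq5.10} of Lemma~\ref{L5.1}). You are merely more explicit than the paper in manufacturing the $\delta_0$ needed to invoke Lemma~\ref{L5.1} in the second case.
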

\begin{proof}
If $E(u)\geq E(W)=\frac{1}{N}\int_{\mathbb{R}^N}|\nabla W|^2dx$, this is obvious. If $E(u)< E(W)$, the claim follows from \eqref{eq5.10}.
\end{proof}

\begin{theorem}\label{t5.1}(Energy trapping). Let $u$ be a solution of the \eqref{eq1.1}, with $t_0=0,\left.u\right|_{t=0}=u_0$ such that for $\delta_0>0$,
$$
\int_{\mathbb{R}^N}\left|\nabla u_0\right|^2dx<\int_{\mathbb{R}^N}|\nabla W|^2dx,\ E(u_0)<(1-\delta_0)E(W).
$$
Let $I(0\in I)$ be the maximal interval of existence. Let $\delta_0,\ \bar{\delta}$ be as in Lemma \ref{L5.1}. Then, for each $t \in I$, we have
\begin{equation}\label{eq5.11}
  \int_{\mathbb{R}^N}|\nabla u(t)|^2dx \leq(1-\delta_1) \int_{\mathbb{R}^N}|\nabla W|^2dx,
\end{equation}
\begin{equation}\label{eq5.12}
  \int_{\mathbb{R}^N}(|\nabla u(t)|^2-F(u(t)))dx \geq  \bar{\delta} \int_{\mathbb{R}^N}|\nabla u(t)|^2dx,
\end{equation}
\begin{equation}\label{eq5.100}
  \int_{\mathbb{R}^N}(|\nabla u(t)|^2-f(u(t))\overline{u}(t))dx \geq  \bar{\delta} \int_{\mathbb{R}^N}|\nabla u(t)|^2dx,
\end{equation}
\begin{equation}\label{eq5.13}
  E(u(t))  \geq  0 .
\end{equation}
\end{theorem}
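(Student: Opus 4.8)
The plan is to run a continuity (bootstrap) argument on the interval $I$, upgrading the pointwise-in-time estimates of Lemma \ref{L5.1} from the initial time to all $t\in I$. First I would observe that at $t=0$ the hypotheses $\int|\nabla u_0|^2<\int|\nabla W|^2$ and $E(u_0)<(1-\delta_0)E(W)$ are exactly the hypotheses of Lemma \ref{L5.1}, so \eqref{eq5.11}--\eqref{eq5.13} hold at $t=0$; moreover conservation of energy \eqref{eq1.3} gives $E(u(t))=E(u_0)<(1-\delta_0)E(W)$ for every $t\in I$, so the energy hypothesis of Lemma \ref{L5.1} is automatically propagated. The only thing that needs propagating is the kinetic bound $\int|\nabla u(t)|^2<\int|\nabla W|^2$.

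Next I would set up the bootstrap set
$$
\Omega:=\Bigl\{\,t\in I:\ \int_{\mathbb{R}^N}|\nabla u(t)|^2\,dx<\int_{\mathbb{R}^N}|\nabla W|^2\,dx\,\Bigr\}.
$$
This set is non-empty ($0\in\Omega$) and, by continuity of $t\mapsto\|\nabla u(t)\|_{L^2}^2$ on $I$ (which follows from $u\in C(I,H^1)$, Theorem \ref{t1.1}), it is relatively open in $I$. To see it is also relatively closed, suppose $t_n\in\Omega$ with $t_n\to t_*\in I$; by continuity $\int|\nabla u(t_*)|^2\le\int|\nabla W|^2$. The point is to exclude the equality $\int|\nabla u(t_*)|^2=\int|\nabla W|^2$: since on each $t_n$ Lemma \ref{L5.1} applies (with the fixed $\delta_0$, using $E(u(t_n))=E(u_0)<(1-\delta_0)E(W)$), we get the \emph{strict} gap $\int|\nabla u(t_n)|^2\le(1-\delta_1)\int|\nabla W|^2$ with $\delta_1$ independent of $n$; passing to the limit gives $\int|\nabla u(t_*)|^2\le(1-\delta_1)\int|\nabla W|^2<\int|\nabla W|^2$, so $t_*\in\Omega$. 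Hence $\Omega$ is open, closed and non-empty in the interval $I$, so $\Omega=I$.

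Once $\Omega=I$ is established, the conclusion is immediate: for every $t\in I$ we have $\int|\nabla u(t)|^2<\int|\nabla W|^2$ and $E(u(t))=E(u_0)\le(1-\delta_0)E(W)$, so Lemma \ref{L5.1} applied at time $t$ yields \eqref{eq5.11} (this is \eqref{eq5.8}), \eqref{eq5.12} (this is \eqref{eq5.9}), \eqref{eq5.100} (this is \eqref{eq5.98}), and \eqref{eq5.13} (this is \eqref{eq5.10}). The main obstacle — really the only subtle point — is the closedness step: one must use the \emph{uniform} strictness coming from the $\delta_1$ in Lemma \ref{L5.1} (equivalently, a convexity/variational argument showing that the functional $\int|\nabla\cdot|^2$ cannot touch the level $\int|\nabla W|^2$ from below while the energy stays strictly below $(1-\delta_0)E(W)$), rather than merely passing a non-strict inequality to the limit; everything else is conservation of energy plus the continuity of the flow in $H^1$ from Theorem \ref{t1.1}.
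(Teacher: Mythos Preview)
Your proposal is correct and is precisely the ``continuity argument'' the paper invokes: the paper's own proof is the single sentence that energy conservation gives $E(u(t))=E(u_0)$ for $t\in I$, whence Lemma \ref{L5.1} combined with a continuity argument yields the conclusion. Your bootstrap via the open--closed set $\Omega$ is exactly the standard way to make that continuity argument explicit, and the key observation you isolate---that the uniform gap $(1-\delta_1)$ from \eqref{eq5.8} is what makes $\Omega$ closed---is the content of that argument.
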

\begin{proof} By energy conservation, $E(u(t))=E(u_0),\ t \in I$ and the theorem follows directly from Lemma \ref{L5.1} and a continuity argument.
\end{proof}
\begin{corollary}\label{c5.2}
Let $u, u_0$ be as in Theorem \ref{t5.1}. Then for all $t \in I$ we have $E(u(t)) \simeq \int_{\mathbb{R}^N}|\nabla u(t)|^2dx \simeq \int_{\mathbb{R}^N}\left|\nabla u_0\right|^2dx$, with comparability constants which depend only on $N$.
\end{corollary}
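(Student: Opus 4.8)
The plan is to deduce the corollary from conservation of energy together with the pointwise/integral consequences of Theorem \ref{t5.1}, reducing everything to one scale--invariant comparison: for every $v\in H^1$ satisfying the energy--trapping conclusions \eqref{eq5.11}--\eqref{eq5.13} one has
\[
E(v)\ \simeq\ \int_{\mathbb{R}^N}|\nabla v|^2\,dx ,
\]
with implied constants depending only on $N$ (and the fixed $\delta_0$); call this $(\star)$. Granting $(\star)$, the corollary is immediate: apply $(\star)$ once to $v=u(t)$ and once to $v=u_0$ --- the latter is legitimate because Theorem \ref{t5.1} at $t=0$ gives \eqref{eq5.11}--\eqref{eq5.13} for $u_0$ --- and then insert $E(u(t))=E(u_0)$ from \eqref{eq1.3} to chain the three quantities. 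This mirrors the corresponding step in \cite{KCEMF2006}.

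For the upper bound in $(\star)$ I would start from $E(v)=\tfrac12\int_{\mathbb{R}^N}|\nabla v|^2\,dx-\int_{\mathbb{R}^N}F(v)\,dx$. For the focusing nonlinearities of interest $\tilde f\ge 0$, hence $F(\rho)=2\int_0^\rho\sigma\tilde f(\sigma)\,d\sigma\ge 0$, so trivially $E(v)\le\tfrac12\int_{\mathbb{R}^N}|\nabla v|^2\,dx$; for a sign--changing $F$ the negative part of $\int F(v)$ is absorbed using $|F(z)|\le C_1|z|^2+C_2|z|^{2^*}$ (Remark \ref{r1.1}), conservation of mass, Sobolev's inequality and the a priori bound $\int|\nabla v|^2\,dx\le(1-\delta_1)\int|\nabla W|^2\,dx$ from \eqref{eq5.11}, again giving $E(v)\lesssim_N\int_{\mathbb{R}^N}|\nabla v|^2\,dx$.

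For the lower bound --- the substantive half --- I would use the Nehari--type inequality \eqref{eq5.100}, that is $\int_{\mathbb{R}^N}f(v)\overline v\,dx\le(1-\bar\delta)\int_{\mathbb{R}^N}|\nabla v|^2\,dx$, rather than \eqref{eq5.12} (whose constant $1-\bar\delta$ need not lie below $\tfrac12$), together with a pointwise comparison $F(\rho)\le\theta\,\rho^2\tilde f(\rho)$, equivalently $\int_{\mathbb{R}^N}F(v)\,dx\le\theta\int_{\mathbb{R}^N}f(v)\overline v\,dx$, for some $\theta=\theta(N)<\tfrac12$. For a pure power $f(z)=|z|^{2^*-2}z$ one has $f(v)\overline v=|v|^{2^*}$ and $F$ enters the energy with coefficient $\tfrac{1}{2^*}=\tfrac{N-2}{2N}<\tfrac12$, so $\theta=\tfrac{1}{2^*}$ and the resulting bound is $E(v)\ge(\tfrac12-\tfrac{1}{2^*})\int|\nabla v|^2\,dx=\tfrac1N\int|\nabla v|^2\,dx$. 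In general the structural conditions $(F_1)$--$(F_6)$ and the variational identities for the ground state $W$ from Section 6 --- the Pohozaev identity and $E(W)=\tfrac1N\int|\nabla W|^2\,dx$ --- are what yield such a $\theta$ on the energy--trapped region, after which
\[
E(v)=\tfrac12\int_{\mathbb{R}^N}|\nabla v|^2\,dx-\int_{\mathbb{R}^N}F(v)\,dx\ \ge\ \Bigl(\tfrac12-\theta\Bigr)\int_{\mathbb{R}^N}|\nabla v|^2\,dx\ \ge\ c(N)\int_{\mathbb{R}^N}|\nabla v|^2\,dx ,
\]
which closes $(\star)$.

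The only genuine difficulty I foresee is precisely establishing $\int F(v)\le\theta\int f(v)\overline v\,dx$ with $\theta<\tfrac12$ depending only on $N$: for a pure power it is the elementary identity above, but for a general nonlinearity it must be read off from $(F_1)$--$(F_6)$ and the variational characterisation of $W$, using crucially that $\int|\nabla v|^2\,dx$ stays strictly below $\int|\nabla W|^2\,dx$ (Theorem \ref{t5.1}). The remaining steps --- the reduction via energy conservation, the upper bound, and the final chaining of $E(u(t))$, $\int|\nabla u(t)|^2\,dx$ and $\int|\nabla u_0|^2\,dx$ --- are routine.
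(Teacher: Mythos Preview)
Your overall architecture --- establish $E(v)\simeq\int|\nabla v|^2\,dx$ for any $v$ satisfying the trapping conclusions, then chain via $E(u(t))=E(u_0)$ --- is exactly what the paper does, and your upper bound is fine. The issue is your proposed route to the lower bound.

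You plan to pass through a comparison $\int F(v)\,dx\le\theta\int f(v)\bar v\,dx$ with $\theta<\tfrac12$, and you flag this as the one genuine difficulty. It is more than a difficulty: under $(F_1)$--$(F_6)$ alone no such $\theta$ is available. A bound of this form is essentially an Ambrosetti--Rabinowitz condition, and the introduction to the paper is explicit that it works under the weaker Berestycki--Lions hypothesis $(F_6)$ precisely to avoid assuming (AR). Nothing in the Pohozaev identity for $W$ or the energy trapping supplies a pointwise or integrated inequality of that shape for general $v$; it holds for the pure power because of the specific algebraic relation $F=\tfrac{1}{2^*}f(u)\bar u$, which is exactly what a general $f$ lacks.

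The paper's lower bound bypasses this entirely. It does not invoke \eqref{eq5.12} or \eqref{eq5.100} at all; instead it reruns the \emph{proof} of \eqref{eq5.9} with the coefficient $\tfrac12$ in front of the gradient term. From $(F_1)$--$(F_2)$ one has $|F(z)|\le\varepsilon|z|^2+C_\varepsilon|z|^{2^*}$, hence by Sobolev $\int F(u(t))\,dx\le\varepsilon\|u(t)\|_{L^2}^2+C_\varepsilon'\|\nabla u(t)\|_{L^2}^{2^*}$. The critical term $C_\varepsilon'\|\nabla u(t)\|_{L^2}^{2^*}=C_\varepsilon'\|\nabla u(t)\|_{L^2}^{2^*-2}\cdot\|\nabla u(t)\|_{L^2}^2$ is then absorbed using the trapping bound $\|\nabla u(t)\|_{L^2}^2\le(1-\delta_1)\|\nabla W\|_{L^2}^2$ together with the smallness of $\|\nabla W\|_{L^2}$ discussed in Remark~\ref{r6.1}, giving $E(u(t))\ge\tilde\delta\int|\nabla u(t)|^2\,dx$ directly. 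You correctly observed that \eqref{eq5.12} by itself is not enough because its $\bar\delta$ need not exceed $\tfrac12$; the fix is not to look for an AR inequality but simply to repeat the growth--plus--Sobolev estimate with the $\tfrac12$ already in place.
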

\begin{proof}
$E(u(t)) \leq \int_{\mathbb{R}^N}|\nabla u(t)|^2dx$, but by the proof of \eqref{eq5.9} there exists $\widetilde{\delta}$ such that
\begin{eqnarray*}
E(u(t)) &=&\frac{1}{2} \int_{\mathbb{R}^N}\left|\nabla u(t, x)\right|^2 d x-  \int_{\mathbb{R}^N}F(u(t,x))  d x\\
&\geq& \widetilde{\delta} \int_{\mathbb{R}^N}|\nabla u(t)|^2dx,
\end{eqnarray*}
so the first equivalence follows. For the second one note that
\begin{equation*}
  E(u(t))= E(u_0) \simeq \int_{\mathbb{R}^N}\left|\nabla u_0\right|^2dx,
\end{equation*}
by the first equivalence when $t=0$.
\end{proof}

\section{Concentration compactness}
The main purpose of this section is to prove the following concentration compactness lemma, which plays a crucial role in the proof of the theorem in the next section.

\begin{lemma}\label{L5.4}(Concentration compactness)
Let $\left\{v_{0, n}\right\} \in H^1, \left\|v_{0, n}\right\|_{H^1}$ $\leq\rho$ . Assume that $\left\|e^{i t \Delta} v_{0, n}\right\|_{L_{t,x}^{\frac{2(N+2)}{N-2}}} \geq \rho>0$, where $\rho$ is as in Lemma \ref{L3.1}. Then there exists a sequence $\left\{V_{0, j}\right\}_{j=1}^{\infty}$ in $H^1$, a subsequence of $\left\{v_{0, n}\right\}$$($which we still call $\left\{v_{0, n}\right\})$  and a triple $\left(\lambda_{j, n} ; x_{j, n} ; t_{j, n}\right) \in \mathbb{R}^{+} \times \mathbb{R}^N \times \mathbb{R}$, with
$$
\frac{\lambda_{j, n}}{\lambda_{j^{\prime}, n}}+\frac{\lambda_{j^{\prime}, n}}{\lambda_{j, n}}+\frac{\left|t_{j, n}-t_{j^{\prime}, n}\right|}{\lambda_{j, n}^2}+\frac{\left|x_{j, n}-x_{j^{\prime}, n}\right|}{\lambda_{j, n}} \rightarrow \infty
$$
as $n \rightarrow \infty$ for $j \neq j^{\prime}$$($we say that $(\lambda_{j, n} ; x_{j, n} ; t_{j, n})$ is orthogonal if this property is verified$)$ such that
\begin{equation}\label{eq5.14}
  \left\|V_{0,1}\right\|_{H^1} \geq \alpha_0(\rho)>0 .
\end{equation}

If $V_j^l(x, t)=e^{i t \Delta} V_{0, j}$, then, given $\varepsilon_0>0$, there exists $J=J\left(\varepsilon_0\right)$ and
\begin{equation}\label{eq5.15}
  \left\{w_n\right\}_{n=1}^{\infty} \in H^1\ \text{so that}\  v_{0, n}=\sum_{j=1}^J \frac{1}{\lambda_{j, n}^{\frac{N-2}{2}}} V_j^l\left(\frac{x-x_{j, n}}{\lambda_{j, n}}, \frac{-t_{j, n}}{\lambda_{j, n}^2}\right)+w_n
\end{equation}
with $\left\|e^{i t \Delta} w_n\right\|_{L_{(-\infty, +\infty)}^{\frac{2(N+2)}{N-2}}W^{1,{\frac{2N(N+2)}{N^2+4}}}} \leq \varepsilon_0$, for n large
\begin{equation}\label{eq5.16}
  \int_{\mathbb{R}^N}\left|\nabla v_{0, n}\right|^2dx=\sum_{j=1}^J \int_{\mathbb{R}^N}\left|\nabla V_{0, j}\right|^2dx+\int_{\mathbb{R}^N}\left|\nabla w_n\right|^2dx+o(1),\ n \rightarrow \infty,
\end{equation}
\begin{equation}\label{eq5.17}
   E\left(v_{0, n}\right)=\sum_{j=1}^J E (V_j^l (\frac{-t_{j, n}} { \lambda_{j, n}^2} ) )+E\left(w_n\right)+o(1),\ n \rightarrow \infty.
\end{equation}
\end{lemma}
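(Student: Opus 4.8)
The plan is to follow the classical profile–decomposition scheme of Bahouri–Gérard / Keraani, adapted to the $H^1$ setting with the admissible pair $(q,r)=\bigl(\tfrac{2(N+2)}{N-2},\tfrac{2N(N+2)}{N^2+4}\bigr)$. First I would extract the first profile: since the sequence $\{v_{0,n}\}$ is bounded in $H^1$ but the free evolutions $e^{it\Delta}v_{0,n}$ do not go to zero in $L^{\frac{2(N+2)}{N-2}}_{t,x}$, I would invoke the improved/refined Strichartz inequality (via a Littlewood–Paley or Besov refinement of Lemma \ref{L2.1}, together with the Sobolev embedding Lemma \ref{L2.2}) to produce, after passing to a subsequence, a scale $\lambda_{1,n}$, a core $(x_{1,n},t_{1,n})$, and a nonzero weak limit $V_{0,1}\in H^1$ obtained as the weak limit of
$$
\lambda_{1,n}^{\frac{N-2}{2}}\,\bigl(e^{-it_{1,n}\Delta}v_{0,n}\bigr)\bigl(\lambda_{1,n}\,\cdot\,+x_{1,n}\bigr),
$$
with the lower bound $\|V_{0,1}\|_{H^1}\ge\alpha_0(\rho)>0$ quantified in terms of the Strichartz norm $\rho$; this gives \eqref{eq5.14}. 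Setting $w_n^{1}=v_{0,n}-\lambda_{1,n}^{-\frac{N-2}{2}}V_1^l\bigl(\frac{x-x_{1,n}}{\lambda_{1,n}},\frac{-t_{1,n}}{\lambda_{1,n}^2}\bigr)$, the weak convergence forces the Pythagorean splitting of both $\|\nabla\,\cdot\,\|_{L^2}^2$ and (using $(F_1)$–$(F_5)$, Remark \ref{r1.1}, and the Brezis–Lieb lemma for the nonlinear functional $\int F(\cdot)$) the energy $E$, up to $o(1)$.

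Next I would iterate: applying the same extraction to $\{w_n^{1}\}$ produces $V_{0,2}$, a new orthogonal pair $(\lambda_{2,n};x_{2,n};t_{2,n})$, and so on. The orthogonality of the pairs — the divergence of
$$
\frac{\lambda_{j,n}}{\lambda_{j',n}}+\frac{\lambda_{j',n}}{\lambda_{j,n}}+\frac{|t_{j,n}-t_{j',n}|}{\lambda_{j,n}^2}+\frac{|x_{j,n}-x_{j',n}|}{\lambda_{j,n}}
$$
for $j\ne j'$ — is exactly what is needed to decouple the pieces in $L^2$, in the energy, and in the free Strichartz norm, so that at every stage the Pythagorean identities \eqref{eq5.16}, \eqref{eq5.17} continue to hold with the remainder $w_n^{J}$. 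The key quantitative input is that each extracted profile contributes a fixed amount to a conserved-type quantity bounded by $\rho$, so $\sum_j\|\nabla V_{0,j}\|_{L^2}^2<\infty$; consequently $\|\nabla V_{0,j}\|_{L^2}\to0$ as $j\to\infty$, and hence (again by the refined Strichartz inequality run in reverse) the free-evolution Strichartz norm $\|e^{it\Delta}w_n^{J}\|_{L^{\frac{2(N+2)}{N-2}}_{t}W^{1,\frac{2N(N+2)}{N^2+4}}_x}$ can be made $\le\varepsilon_0$ by choosing $J=J(\varepsilon_0)$ large and then $n$ large. Renaming $w_n=w_n^{J}$ gives \eqref{eq5.15}.

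The main obstacle I anticipate is twofold. First, the refined Strichartz estimate and the ensuing weak-limit extraction must be carried out in the inhomogeneous $H^1$ (rather than $\dot H^1$) framework used throughout this paper, which means tracking the $L^2$ part of the norm alongside the $\dot H^1$ part; the scales $\lambda_{j,n}$ may degenerate to $0$ or $\infty$, and one must check that the low-frequency contribution is controlled so that the $L^2$-mass does not leak — this is where the interpolation and the structure conditions $(F_1)$–$(F_5)$ enter, together with the small-data part of Lemma \ref{L3.1} to justify working below the threshold $\rho$. Second, proving the asymptotic energy decoupling \eqref{eq5.17} for the \emph{general} nonlinearity $F$ (not a pure power) requires a Brezis–Lieb–type expansion for $\int_{\mathbb{R}^N}F(\cdot)\,dx$ under orthogonal concentration, which I would establish from Remark \ref{r1.1} (the bounds $|F(z)|\le\varepsilon|z|^2+C_\varepsilon|z|^{2^*}$ and the analogous gradient bounds) by a standard but careful equiintegrability/Fatou argument; the fact that $t_{j,n}/\lambda_{j,n}^2$ need not converge is handled by evaluating $E$ of the \emph{linear} profiles $V_j^l$ at time $-t_{j,n}/\lambda_{j,n}^2$, exactly as written in \eqref{eq5.17}, so no passage to a nonlinear profile is needed at this stage.
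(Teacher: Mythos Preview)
Your proposal is correct and follows essentially the same route as the paper: the paper's own proof of Lemma~\ref{L5.4} is extremely terse, deferring \eqref{eq5.14} to Keraani's Corollary~1.9, the decomposition \eqref{eq5.15}--\eqref{eq5.16} and the remainder smallness to the linear profile decomposition of Lemma~\ref{L7.2} (itself based on the refined Sobolev/Strichartz inequality and omitted as ``completely similar to \cite{KCEMF2006,KSK2001}''), and \eqref{eq5.17} to ``orthogonality \dots\ as in the proof of \eqref{eq5.16}.'' Your outline---refined Strichartz extraction of a first nontrivial profile, iterative removal, pairwise orthogonality giving the $\dot H^1$ Pythagorean identity, and a Brezis--Lieb/equiintegrability argument (using $|F(z)|\le \varepsilon|z|^2+C_\varepsilon|z|^{2^*}$ from $(F_1)$--$(F_5)$) for the energy splitting of the general $F$---is exactly the content behind those citations, only written out explicitly. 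The two technical caveats you flag (the $H^1$ versus $\dot H^1$ scaling with possibly degenerating $\lambda_{j,n}$, and the non-power nonlinearity in \eqref{eq5.17}) are genuine and are precisely the points the paper handles only by reference; your proposed treatment of them is the standard one and would complete the argument.
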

The proof of this lemma originates from Keraani \cite{KSK2001}, but we need to modify the proof since this paper considers general nonlinear terms. Firstly, we consider linear equation
\begin{equation}\label{eq7.5}
  \left\{\begin{array}{l}
i \partial_t u+\Delta u  =0,\ (x, t) \in \mathbb{R}^N \times \mathbb{R}, \\
\left.u(0,x)\right|_{t=0}=u_0(x) \in  H ^1(\mathbb{R}^N).
\end{array}\right.
\end{equation}

\begin{lemma}\label{L7.2}
Let $(\varphi_n)_{n \geq  0}$ be a bounded sequence in $H^1(\mathbb{R}^N)$. Let $(v_n)_{n \geqslant 0}$ be the sequence of solutions to \eqref{eq7.5} with initial data $v_n(0, x)=\varphi_n(x)$. Then there exist a subsequence $(v_n^{\prime})$ of $(v_n)$, a sequence $\left(\mathbf{h}^j\right)_{j \geqslant 1}$ of scales, a sequence $\left(\mathbf{z}^j\right)_{j \geqslant 1}$ of cores and a sequence $\left(V^j\right)_{j \geqslant 1}$ of solutions to \eqref{eq7.5}, such that

(i) the pairs $\left(\mathbf{h}^j, \mathbf{z}^j\right)$ are pairwise orthogonal;

(ii) for every $l \geq 1$,
 \begin{equation}\label{eq7.6}
   v_n^{\prime}(x, t)=\sum_{j=1}^l \frac{1}{(h_n^j)^{\frac{N-2}{2}}} V^j\left( \frac{x-x_n^j}{h_n^j},\frac{t-t_n^j}{\left(h_n^j\right)^2}\right)+w_n^l(x, t),
 \end{equation}
with
 \begin{equation}\label{eq5.99}
   \limsup\limits_{n \rightarrow \infty}\left\|w_n^l\right\|_{L^q\left(\mathbb{R}, L^r\left(\mathbb{R}^N\right)\right)} \rightarrow 0, \ l \rightarrow \infty
 \end{equation}
for every $H^1$-admissible pair $(q, r)$(defined in Definition \ref{D2.3}), and, for every $l \geqslant 1$,
 \begin{equation}\label{eq7.7}
   \int_{\mathbb{R}^N}\left|\nabla v_{ n}'\right|^2dx=\sum_{j=1}^l \int_{\mathbb{R}^N}\left|\nabla V^{j}\right|^2dx+\int_{\mathbb{R}^N}\left|\nabla w_n^l\right|^2dx +o(1), \quad n \rightarrow \infty .
 \end{equation}
\end{lemma}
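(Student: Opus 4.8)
The plan is to adapt the inductive profile–extraction argument of Bahouri–G\'erard and Keraani \cite{KSK2001}. Since \eqref{eq7.5} is the free Schr\"odinger flow, Lemma~\ref{L7.2} involves no trace of the nonlinearity $f$ and is purely a statement about $e^{it\Delta}$; the whole argument is organised around the $\dot H^1(\R^N)$ inner product. For a bounded sequence $(g_n)$ in $H^1$ I introduce the defect functional
$$
\eta\big((g_n)\big):=\sup\Big\{\,\|V\|_{\dot H^1}\ :\ \exists\ \mathbf h=(h_n)\subset\R^+,\ \mathbf z=(t_n,x_n),\ (h_n)^{\frac{N-2}{2}}\big(e^{it_n\Delta}g_n\big)(h_n\,\cdot\,+x_n)\rightharpoonup V\ \text{in}\ \dot H^1\,\Big\},
$$
which satisfies $\eta((g_n))\le\limsup_n\|g_n\|_{\dot H^1}$ and quantifies the obstruction to the vanishing of the Strichartz norm.

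The analytic core is the following dichotomy, the $\dot H^1$‑critical refined (``inverse'') Strichartz estimate: with the $H^1$‑admissible pair $(q_0,r_0)=\big(\tfrac{2(N+2)}{N-2},\tfrac{2(N+2)}{N-2}\big)$,
$$
\eta\big((g_n)\big)=0\quad\Longrightarrow\quad \limsup_{n\to\infty}\|e^{it\Delta}g_n\|_{L^{q_0}_tL^{r_0}_x}=0 .
$$
To prove this I first use the space–time Sobolev embedding of Lemma~\ref{L2.2} to bound $\|e^{it\Delta}g\|_{L^{q_0}_tL^{r_0}_x}$ by $\|e^{it\Delta}(\nabla g)\|_{L^{q_0}_tL^{\tilde r_0}_x}$ with $(q_0,\tilde r_0)$ the $L^2$‑admissible pair of Lemma~\ref{L2.2}, and then, via a Littlewood–Paley decomposition together with the endpoint Strichartz estimate of Proposition~\ref{P2.1}, by $\|\nabla g\|_{L^2}^{1-\theta}$ times a negative‑regularity Besov‑type norm of $\nabla g$ raised to a power $\theta\in(0,1)$; if the Strichartz norm of $e^{it\Delta}g_n$ does not tend to $0$ then that Besov norm does not either, and rescaling/translating $g_n$ so as to saturate it produces a nontrivial weak $\dot H^1$‑limit, i.e.\ $\eta((g_n))>0$. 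I expect this refined estimate to be the main obstacle; everything afterwards is bookkeeping.

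Then one iterates. Set $w_n^0:=\varphi_n$. If $\eta((w_n^l))=0$, terminate (all later profiles $=0$). Otherwise, after passing to a subsequence, choose scales $\mathbf h^{l+1}$, cores $\mathbf z^{l+1}$ and $V_0^{l+1}\in\dot H^1$ with $\|V_0^{l+1}\|_{\dot H^1}\ge\tfrac12\eta((w_n^l))$ and $(h_n^{l+1})^{\frac{N-2}{2}}(e^{it_n^{l+1}\Delta}w_n^l)(h_n^{l+1}\cdot+x_n^{l+1})\rightharpoonup V_0^{l+1}$ in $\dot H^1$; put $V^{l+1}:=e^{it\Delta}V_0^{l+1}$ (a solution of \eqref{eq7.5}) and subtract its rescaled translate from $w_n^l$. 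Telescoping these definitions over $l$ yields \eqref{eq7.6}, and a diagonal extraction gives one subsequence $(v_n')$ good for every $l$. By construction the $(l+1)$‑st renormalised remainder tends weakly to $0$ in $\dot H^1$; since $e^{it\Delta}$, translations and dilations act unitarily on $\dot H^1$, expanding $\|\nabla v_n'\|_{L^2}^2$ from \eqref{eq7.6} at $t=0$ and using the pairwise orthogonality of the parameters $(\mathbf h^j,\mathbf z^j)$ (Definition~\ref{D2.2}) to annihilate the cross terms gives \eqref{eq7.7} for each finite $l$, and then for all $t$ by unitarity. Orthogonality and decoupling are proved together by induction on $l$: were some pair $(\mathbf h^j,\mathbf z^j),(\mathbf h^{j'},\mathbf z^{j'})$ non‑orthogonal, a subsequence of the associated dilation–translation operators would converge, forcing $V_0^{j'}=0$, a contradiction. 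Finally \eqref{eq7.7} gives $\sum_{j\ge1}\|V_0^j\|_{\dot H^1}^2\le\limsup_n\|\varphi_n\|_{\dot H^1}^2<\infty$, so $\|V_0^{l+1}\|_{\dot H^1}\to0$ and hence $\eta((w_n^l))\le2\|V_0^{l+1}\|_{\dot H^1}\to0$ as $l\to\infty$.

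Combining the last two paragraphs, $\limsup_n\|e^{it\Delta}w_n^l\|_{L^{q_0}_tL^{r_0}_x}\to0$ as $l\to\infty$; since $(q_0,r_0)$ is interior to the $H^1$‑admissible range and $\|\nabla w_n^l\|_{L^2}$ is uniformly bounded by \eqref{eq7.7}, interpolating this decay against the uniform bound $\|e^{it\Delta}w_n^l\|_{L^q_tL^r_x}\le C\|\nabla w_n^l\|_{L^2}$ of Proposition~\ref{P2.2} propagates the decay to every $H^1$‑admissible pair $(q,r)$ (away from the degenerate endpoints, which is all that is used in the sequel), giving \eqref{eq5.99}. Together with \eqref{eq7.6} and \eqref{eq7.7} this proves the lemma. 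I note that, because every estimate used lives at the level of $\dot H^1$ (Propositions~\ref{P2.1}, \ref{P2.2}, Lemma~\ref{L2.2} and the homogeneous decoupling \eqref{eq7.7}), the inhomogeneous $L^2$‑part of the $H^1$‑norm never enters and only \eqref{eq7.7} has to be tracked; in particular no orthogonality in $L^2$ is needed.
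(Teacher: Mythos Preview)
Your proposal is correct and follows essentially the same route the paper invokes: the paper does not give an independent proof but simply cites \cite{KCEMF2006,KSK2001} and records the refined Sobolev inequality $\|h\|_{L^6}\le C\|\nabla h\|_{L^2}^{1/3}\|\nabla h\|_{\dot B^0_{2,\infty}}^{2/3}$ as the key ingredient, which is exactly the Besov-type inverse Strichartz step you identify as ``the main obstacle''. Your inductive extraction via the defect functional $\eta$, the weak-limit/orthogonality bookkeeping, the $\dot H^1$-Pythagorean expansion giving \eqref{eq7.7}, and the interpolation to propagate the decay of $\|w_n^l\|_{L^{q_0}_tL^{r_0}_x}$ to general $H^1$-admissible pairs are precisely Keraani's argument, so there is nothing to add.
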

\begin{proof}
As stated by \cite{KCEMF2006}. It is based on the ``refined Sobolev inequality'' $(N=3)$
$$
\|h\|_{L^6\left(\mathbb{R}^3\right)} \leq C\|\nabla h\|_{L^2\left(\mathbb{R}^3\right)}^{1 / 3}\|\nabla h\|_{\dot{B}_{2, \infty}^0}^{2 / 3},
$$
where $\dot{B}_{2, \infty}^0$ is the standard Besov space \cite{JBJL1976}. The proof of this lemma is completely similar to \cite{{KCEMF2006},{KSK2001}}, so we omit it here.
\end{proof}

\begin{lemma}\label{L7.3}
Let $\left(v_n^{\prime}\right)$ be the subsequence of $(v_n)$ given by Lemma \ref{L7.2}. Let $\left(u_n^{\prime}\right)$ be the sequence of solutions to \eqref{eq1.1}, with the same Cauchy data at $t=0$ as $v_n^{\prime}$. Then
 \begin{equation}\label{eq7.8}
u_n^{\prime}(x, t)=\sum_{j=1}^l \frac{1}{(h_n^j)^{\frac{N-2}{2}}} U^j\left(\frac{x-x_n^j}{h_n^j},\frac{t-t_n^j}{\left(h_n^j\right)^2}  \right)+w_n^l(x, t)+r_n^l(x, t),
 \end{equation}
with
$$
\limsup _{n \rightarrow \infty}\left(\sup _{t \in \mathbb{R}} E\left(r_n^l(t, \cdot)\right)+\left\|r_n^l\right\|_{L_{t,x}^{\frac{2(N+2)}{N-2}}\left(\mathbb{R}\times\mathbb{R}^N\right)}+\left\|\nabla r_n^l\right\|_{L_{t,x}^{\frac{2(2+N)}{N}}\left(\mathbb{R}\times\mathbb{R}^N\right)}\right) \rightarrow 0,\ l \rightarrow \infty,
$$
where $V^j, \mathbf{h}^j, \mathbf{z}^j, w_n^l$ are as in \eqref{eq7.6} and $U^j$ is the nonlinear profile associated to $\left(V^j, \mathbf{h}^j, \mathbf{z}^j\right)$(defined in Definition \ref{D2.4}).
\end{lemma}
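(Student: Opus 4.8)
The plan is to establish the nonlinear profile decomposition \eqref{eq7.8} by combining the linear profile decomposition of Lemma \ref{L7.2} with the long-time perturbation theory of Proposition \ref{P5.1}, following the scheme of Keraani and of \cite{KCEMF2006}, but accounting for the general nonlinearity $f$ via the bounds in Remark \ref{r1.1}. First I would fix $l\ge 1$ and form the approximate solution
\[
\tilde u_n^l(x,t):=\sum_{j=1}^l \frac{1}{(h_n^j)^{\frac{N-2}{2}}} U^j\!\left(\frac{x-x_n^j}{h_n^j},\frac{t-t_n^j}{(h_n^j)^2}\right)+w_n^l(x,t),
\]
where each $U^j$ is the nonlinear profile associated to $(V^j,\mathbf h^j,\mathbf z^j)$ via Definition \ref{D2.4}. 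One then checks that $\tilde u_n^l$ solves \eqref{eq1.1} up to an error $e_n^l:=i\partial_t\tilde u_n^l+\Delta\tilde u_n^l+f(\tilde u_n^l)$, and that $r_n^l:=u_n^l-\tilde u_n^l$ is precisely what the perturbation proposition controls once $\|e_n^l\|_{L_I^\infty H^1\cap L_I^2 L^{2N/(N+2)}}$ and $\|u_n^l(0)-\tilde u_n^l(0)\|_{H^1}$ are shown to be small for $l$ large and $n$ large. The initial-data error vanishes because the decomposition \eqref{eq7.6} is an exact identity at $t=0$ for the linear flow and the Cauchy data agree, so $u_n^l(0)-\tilde u_n^l(0)=0$ in fact.

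The heart of the argument is the estimate of $e_n^l$. Writing
\[
e_n^l = f(\tilde u_n^l) - \sum_{j=1}^l \frac{1}{(h_n^j)^{\frac{N-2}{2}}} (f(U^j))\!\left(\tfrac{x-x_n^j}{h_n^j},\tfrac{t-t_n^j}{(h_n^j)^2}\right),
\]
one expands this into (a) cross terms of the form $f\big(\sum \text{pieces}\big)-\sum f(\text{pieces})$, which are small by the orthogonality of the pairs $(\mathbf h^j,\mathbf z^j)$ — distinct bubbles have asymptotically disjoint support in the relevant Strichartz norms — and (b) the contribution of the remainder $w_n^l$, handled by the fact that $\|w_n^l\|$ in the scale-invariant $L_{t,x}^{2(N+2)/(N-2)}$ norm tends to $0$ as $l\to\infty$ by \eqref{eq5.99}, combined with the Hölder/Sobolev chain already used in Lemma \ref{L3.1}. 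Here the bounds $|f(z)|\le C_1|z|+C_2|z|^{2^*-1}$ and $|\nabla(f(z))|\le C_3|\nabla z|+C_4|z|^{2^*-2}|\nabla z|$ from Remark \ref{r1.1}, together with Lemma \ref{L2.3} for the fractional-derivative pieces, replace the exact algebraic identities available for pure powers; the lower-order linear term $C_1|z|$ is harmless on bounded time intervals since it produces a factor $T$, exactly as in \eqref{eq3.1}. One must also verify the almost-orthogonality of the energies and of the $\dot H^1$ norms of the $U^j$, i.e. the nonlinear analogue of \eqref{eq7.7}, which controls a priori $\sup_n\|\tilde u_n^l\|_{L^\infty H^1}$ and $\sup_n\|\tilde u_n^l\|_{L_{t,x}^{2(N+2)/(N-2)}}$ uniformly in $l$, so that the constant $C(E,L,M)$ in Proposition \ref{P5.1} stays bounded.

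Once $\|e_n^l\|\to 0$ (first $n\to\infty$, then $l\to\infty$) and the uniform bounds on $\tilde u_n^l$ are in hand, Proposition \ref{P5.1} applied with $\tilde u=\tilde u_n^l$, $u=u_n^l$, $t_0=0$ gives $\|u_n^l-\tilde u_n^l\|_{L_I^\infty H^1\cap L_I^{2(N+2)/(N-2)}W^{1,2N(N+2)/(N^2+4)}}\le C\varepsilon$, which is exactly the claimed decay of $r_n^l$ in the three norms $\sup_t E(r_n^l)$, $\|r_n^l\|_{L_{t,x}^{2(N+2)/(N-2)}}$ and $\|\nabla r_n^l\|_{L_{t,x}^{2(N+2)/N}}$ (the last two being Strichartz norms embedded in that space, and the energy bound following from the $L^\infty H^1$ control via Remark \ref{r1.1}). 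I expect the main obstacle to be the cross-term estimate: establishing that $f\big(\sum_j \Phi_n^j\big)-\sum_j f(\Phi_n^j)\to 0$ in $L_I^2 L^{2N/(N+2)}$ and in $L_I^2 W^{1,2N/(N+2)}$ when the $\Phi_n^j$ are orthogonally concentrated bubbles — for pure powers this is a clean pointwise inequality, but for general $C^1$ nonlinearity satisfying only $(F_1)$–$(F_5)$ one has to exploit the $C^1$ hypothesis to get a mean-value-type bound $|f(a+b)-f(a)-f(b)|\lesssim (|a|^{2^*-2}+|b|^{2^*-2})(|a||b|)^{\text{something}}+\dots$ and then sum over pairs $j\neq j'$ using orthogonality; the gradient version, needing $(F_3)$–$(F_4)$ and Lemma \ref{L2.3}, is the delicate part. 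The remaining pieces are routine adaptations of \cite{KCEMF2006,KSK2001}.
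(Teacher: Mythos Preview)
Your overall strategy — form the approximate solution $\tilde u_n^l=\sum_j U_n^j+w_n^l$, show it solves \eqref{eq1.1} up to a small source, and conclude by perturbation — is exactly the paper's, but there are two genuine gaps and one point where your route and the paper's diverge.

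First, the claim that $u_n'(0)-\tilde u_n^l(0)=0$ is false. At $t=0$ the linear decomposition \eqref{eq7.6} gives $v_n'(0)=\sum_j V_n^j(0)+w_n^l(0)$, while $\tilde u_n^l(0)=\sum_j U_n^j(0)+w_n^l(0)$; since $u_n'(0)=v_n'(0)$ one has $u_n'(0)-\tilde u_n^l(0)=\sum_{j=1}^l(V_n^j-U_n^j)(0)$, which is $o(1)$ as $n\to\infty$ by the definition of nonlinear profile, not zero. This is the quantity $\gamma_n^l(0)$ in the paper and it must be tracked.

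Second, you propose to feed $e_n^l$ into Proposition \ref{P5.1}, which requires smallness of $e$ in $L^\infty_I H^1\cap L^2_I L^{2N/(N+2)}$. The $L^\infty_t H^1$ part is not natural for the cross terms $f\big(\sum U_n^j\big)-\sum f(U_n^j)$ and for the $w_n^l$-contribution on $I=\mathbb{R}$; your remark that the subcritical piece $C_1|z|$ ``produces a factor $T$'' is exactly where this fails, since here $T=\infty$. The paper sidesteps this by \emph{not} invoking Proposition \ref{P5.1}: it writes the equation for $r_n^l$ directly, applies the Strichartz/energy inequality of Lemma \ref{L7.4} with the source measured in the correct dual Strichartz norm $\|\nabla f_n^l\|_{L^{2(N+2)/(N+4)}_{t,x}}$, and then carries out the partition-plus-bootstrap argument (Propositions \ref{P7.1}--\ref{P7.3} and Lemma \ref{L7.5}) by hand. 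In that norm the subcritical contribution is controlled by choosing the $\varepsilon$ in $(F_1)$--$(F_4)$ small rather than by a factor of $T$. This is the same mechanism that a properly stated long-time perturbation lemma would contain internally, but Proposition \ref{P5.1} as written in this paper is not quite that lemma.

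So your outline is morally the paper's proof, and the cross-term estimate you flag as the main obstacle is indeed Proposition \ref{P7.3}; but to make it go through you must (i) correct the initial-data mismatch to $o(1)$, and (ii) either prove a sharper perturbation lemma with the source in $L^{2(N+2)/(N+4)}_{t,x}$ at the gradient level, or, as the paper does, run the time-partition/continuity argument directly on $r_n^l$.
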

\begin{proof}
Let us first restate the problem. Let $\left(\varphi_n\right)_{n \geqslant 0}$ be a bounded sequence in $H^1(\mathbb{R}^N)$, such that $\limsup\limits_{n \rightarrow \infty}\left\|u_n(0, .)\right\|_{H^1(\mathbb{R}^N)}<\rho$ (where $\rho$ is given by Lemma \ref{L3.2}). Let $\left(v_n\right)_{n \geqslant 0}\left(\operatorname{resp} .\left(u_n\right)_{n \geqslant 0}\right)$ the sequence of solutions to \eqref{eq7.5} (resp. to \eqref{eq1.1}) with initial data $\left(\varphi_n\right)_{n \geqslant 0}$. Lemma \ref{L7.2} provides a decomposition of $\left(v_n\right)_{n \geqslant 0}$ for a subsequence $\left(v_n^{\prime}\right)$ of $\left(v_n\right)$ in the form
$$
v_n^{\prime}(x, t)=\sum_{j=1}^l \frac{1}{(h_n^j)^{\frac{N-2}{2}}} V^j\left(\frac{x-x_n^j}{h_n^j},\frac{t-t_n^j}{\left(h_n^j\right)^2}  \right)+w_n^l(x, t),
$$
where $\left(V^j\right)$ is a family of solutions to \eqref{eq7.5}, $\left(\mathbf{h}^j, \mathbf{z}^j\right)_{j \geqslant 1}$ is a pairwise orthogonal family of scales-cores, and the remainder $w_n^l$ satisfies
$$
\limsup _{n \rightarrow \infty}\left\|w_n^l\right\|_{L_{t,x}^{\frac{2(N+2)}{N-2}}\left(\mathbb{R}\times\mathbb{R}^N\right)} \rightarrow  0,\ l \rightarrow \infty .
$$
Also, the following almost orthogonality identity holds
$$
\int_{\mathbb{R}^N}\left|\nabla v_{ n}'\right|^2dx=\sum_{j=1}^l \int_{\mathbb{R}^N}\left|\nabla V^{j}\right|^2dx+\int_{\mathbb{R}^N}\left|\nabla w_n^l\right|^2dx +o(1), \quad n \rightarrow \infty .
$$
Let $\left(u_n^{\prime}\right)$ be the sequence of solutions of \eqref{eq7.5}, with the same Cauchy data at $t=0$ as $v_n^{\prime}$ and $U^j$ the nonlinear profile associated to $\left(V^j, \mathbf{h}^j, \mathbf{z}^j\right)$ for every $j \geqslant 1$. Observe that, in view of \eqref{eq7.7}, $\int_{\mathbb{R}^N}(V^j)dx<\rho^2$ and then the nonlinear profile $U^j$ is globally well defined. We set
$$
r_n^l(x, t)=u_n^{\prime}(x, t)-\sum_{j=1}^l \frac{1}{(h_n^j)^{\frac{N-2}{2}}} U^{(j)}\left(\frac{x-x_n^j}{h_n^j},\frac{t-t_n^j}{\left(h_n^j\right)^2}  \right)-w_n^l(x, t) .
$$
Our purpose is to prove that
$$
\limsup _{n \rightarrow \infty}\left(\sup _{t \in \mathbb{R}} E\left(r_n^l(t, .)\right)+\left\|r_n^l\right\|_{L_{t,x}^{\frac{2(N+2)}{N-2}}\left(\mathbb{R}\times\mathbb{R}^N \right)}+\left\|\nabla r_n^l\right\|_{L_{t,x}^{\frac{2(2+N)}{N}}\left(\mathbb{R}\times\mathbb{R}^N \right)}\right) \rightarrow 0,\ l \rightarrow \infty.
$$
The following notations will be used
\begin{eqnarray*}
&&p(z)  =f(z), \\
&&V_n^j(x, t) = \frac{1}{(h_n^j)^{\frac{N-2}{2}}}  V^j\left(\frac{x-x_n^j}{h_n^j},\frac{t-t_n^j}{\left(h_n^j\right)^2}\right), \\
&&U_n^j(x, t)  = \frac{1}{(h_n^j)^{\frac{N-2}{2}}}  U^j\left( \frac{x-x_n^j}{h_n^j},\frac{t-t_n^j}{\left(h_n^j\right)^2}\right), \\
&&W_n^l = \sum\limits_{j=1}^l U_n^j, \\
&&f_n^l = \sum\limits_{j=1}^l p\left(U_n^j\right)-p\left(\sum_{j=1}^l U_n^j+w_n^l+r_n^l\right) .
\end{eqnarray*}
The function $r_n^l$ provided by \eqref{eq7.8} satisfies the equation
$$
\left\{\begin{array}{l}
i \partial_t r_n^l+  \Delta r_n^l=f_n^l \\
r_n^l(x, 0)=\sum\limits_{j=1}^l\left(V_n^j-U_n^j\right)(x, 0) .
\end{array}\right.
$$
We also introduce the norm
$$
\interleave g\interleave_I=\|\nabla g\|_{L_{t,x}^{\frac{2(N+2)}{N}}\left(I \times \mathbb{R}^N\right)}+\|g\|_{L_{t,x}^{\frac{2(N+2)}{N-2}}\left(I \times \mathbb{R}^N\right)} .
$$
Note that, by Strichartz estimates \eqref{eq2.1} and \eqref{eq2.2}, we get
$$
\interleave e^{it\Delta} \varphi\interleave_{\mathbb{R}} \leq C\|\varphi\|_{H^1\left(\mathbb{R}^N\right)}
$$
for every $\varphi \in H^1(\mathbb{R}^N)$. In the rest of the paper we note, for every $a \in \mathbb{R}$,
 \begin{equation}\label{eq6.99}
   \gamma_n^l(a)=\left\|\nabla r_n^l(\cdot,a)\right\|_{L^2\left(\mathbb{R}^N\right)} .
 \end{equation}
Recall that our purpose is to prove that
$$
\limsup\limits_{n \rightarrow \infty}\left(\sup\limits_{t \in \mathbb{R}} E\left(r_n^l(\cdot, t)\right)+\interleave r_n^l\interleave_{\mathbb{R}}\right) \rightarrow 0,\ l \rightarrow \infty .
$$

The following lemma is a combination of Strichartz estimates and energy inequalities, see \cite{{TCS1993},{KSK2001},{KSK2006}}.
\begin{lemma}\label{L7.4}
Let $v \in C([a, b], \dot{H}^1(\mathbb{R}^N))$ be a solution on $I=[a, b]$ of the nohomogeneous Schr\"odinger equation
$$
i \partial_t v+\Delta v=f,
$$
with $\nabla f \in L^{\frac{2(N+2)}{N+4}}(I \times \mathbb{R}^N)$. Then we have
$$
\interleave v\interleave_I+\sup _{t \in I} \|\nabla v(t))\|_{L^2(\mathbb{R}^N)} \leq C\left(\|\nabla v(a))\|_{L^2(\mathbb{R}^N)}+\|\nabla f\|_{L^{\frac{2(N+2)}{N+4}}\left(I \times \mathbb{R}^N\right)}\right) .
$$
\end{lemma}
Applying Lemma \ref{L7.4} to $r_n^l$ on $I=[a, b]$, it holds
 \begin{equation}\label{eq7.9}
   \interleave r_n^l\interleave_I+\sup\limits_{t \in I} \|\nabla r_n^l(t)\|_{L^2(\mathbb{R}^N)}  \leq C\left(\gamma_n^l(a)+\left\|\nabla f_n^l\right\|_{L^{\frac{2(N+2)}{N+4}}\left(I \times \mathbb{R}^N\right)}\right) .
 \end{equation}
Next, we estimate
\begin{eqnarray}\label{eq7.10}
  &&\left\|\nabla f_n^l\right\|_{L^{\frac{2(N+2)}{N+4}}\left(I \times \mathbb{R}^N\right)} \nonumber\\
  &=& \left\|\nabla\left[ \sum\limits_{j=1}^l p\left(U_n^j\right)-p\left(\sum_{j=1}^l U_n^j+w_n^l+r_n^l\right)\right]\right\|_{L^{\frac{2(N+2)}{N+4}}\left(I \times \mathbb{R}^N\right)} \nonumber\\
    &\leq&\left\|\nabla\left[ \sum\limits_{j=1}^l p\left(U_n^j\right)-p\left(W_n^l\right)\right]\right\|_{L^{\frac{2(N+2)}{N+4}}\left(I \times \mathbb{R}^N\right)}+\left\|\nabla\left(p\left(W_n^l+w_n^l\right)-p\left(W_n^l\right)\right)\right\|_{L^{\frac{2(N+2)}{N+4}}\left(I \times \mathbb{R}^N\right)} \nonumber\\
    &&+\left\|\nabla\left(p\left(W_n^l+w_n^l+r_n^l\right)-p\left(W_n^l+w_n^l\right)\right)\right\|_{L^{\frac{2(N+2)}{N+4}}\left(I \times \mathbb{R}^N\right)}.
\end{eqnarray}
Furthermore, a combination of Leibnitz formula, H\"older inequality and $(F_1)-(F_4)$ gives
\begin{eqnarray}\label{eq7.11}
&&\left\|\nabla\left(p\left(W_n^l+w_n^l+r_n^l\right)-p\left(W_n^l+w_n^l\right)\right)\right\|_{L^{\frac{2(N+2)}{N+4}}\left(I \times \mathbb{R}^N\right)} \nonumber\\
&\leq&C\left\| |\nabla r_n^l|\left(\left(W_n^l+w_n^l+r_n^l\right)^{\frac{4}{N-2}}+\left(W_n^l+w_n^l\right)^{\frac{4}{N-2}}\right) \right\|_{L^{\frac{2(N+2)}{N+4}}\left(I \times \mathbb{R}^N\right)}+ \varepsilon\|\nabla  r_n^l\|_{L^{\frac{2(N+2)}{N+4}}\left(I \times \mathbb{R}^N\right)}  \nonumber\\
&\leq&C\left\| |\nabla r_n^l|\left(W_n^l+w_n^l+r_n^l\right)^{\frac{4}{N-2}}\right\|_{L^{\frac{2(N+2)}{N+4}}\left(I \times \mathbb{R}^N\right)}+\left\||\nabla r_n^l|\left(W_n^l+w_n^l\right)^{\frac{4}{N-2}} \right\|_{L^{\frac{2(N+2)}{N+4}}\left(I \times \mathbb{R}^N\right)} \nonumber\\
&&+ \varepsilon\|\nabla  r_n^l\|_{L^{\frac{2(N+2)}{N+4}}\left(I \times \mathbb{R}^N\right)} \nonumber\\
&\leq&C\left\|\nabla r_n^l\right\|_{L^{\frac{2(N+2)}{N}}\left(I \times \mathbb{R}^N\right)}\cdot\left\| \left(W_n^l+w_n^l+r_n^l\right)^{\frac{4}{N-2}}\right\|_{L^{\frac{2(N+2)}{4}}\left(I \times \mathbb{R}^N\right)}+ \varepsilon\|\nabla  r_n^l\|_{L^{\frac{2(N+2)}{N+4}}\left(I \times \mathbb{R}^N\right)} \nonumber\\
&&+\left\||\nabla r_n^l|\left(W_n^l+w_n^l\right)^{\frac{4}{N-2}} \right\|_{L^{\frac{2(N+2)}{N+4}}\left(I \times \mathbb{R}^N\right)} \nonumber\\
&\leq&C\left\|\nabla r_n^l\right\|_{L^{\frac{2(N+2)}{N}}\left(I \times \mathbb{R}^N\right)}\cdot\left\| W_n^l+w_n^l+r_n^l\right\|_{L^{\frac{2(N+2)}{N-2}}\left(I \times \mathbb{R}^N\right)}^{\frac{4}{N-2}}+ \varepsilon\|\nabla  r_n^l\|_{L^{\frac{2(N+2)}{N+4}}\left(I \times \mathbb{R}^N\right)} \nonumber\\
&&+\left\||\nabla r_n^l|\left(W_n^l+w_n^l\right)^{\frac{4}{N-2}} \right\|_{L^{\frac{2(N+2)}{N+4}}\left(I \times \mathbb{R}^N\right)} \nonumber\\
&\leq&C\left\|\nabla r_n^l\right\|_{L^{\frac{2(N+2)}{N}}\left(I \times \mathbb{R}^N\right)}\cdot \sum_{\alpha=1}^{\frac{4}{N-2}}\left\|W_n^l+w_n^l\right\|_{L^{\frac{2(N+2)}{N-2}}\left(I \times \mathbb{R}^N\right)}^{\frac{4}{N-2}-\alpha}\cdot\left\|(r_n^l)\right\|_{L^{\frac{2(N+2)}{N-2}}\left(I \times \mathbb{R}^N\right)}^\alpha \nonumber\\
&&+\left\||\nabla r_n^l|\left(W_n^l+w_n^l\right)^{\frac{4}{N-2}} \right\|_{L^{\frac{2(N+2)}{N+4}}\left(I \times \mathbb{R}^N\right)}+ \varepsilon\|\nabla  r_n^l\|_{L^{\frac{2(N+2)}{N+4}}\left(I \times \mathbb{R}^N\right)} \nonumber\\
&=&C \sum_{\alpha=1}^{\frac{4}{N-2}}\interleave W_n^l+w_n^l\interleave_{I}^{\frac{4}{N-2}-\alpha}\cdot\interleave r_n^l\interleave_{I}^{\alpha+1}+C\left\||\nabla r_n^l|\left(W_n^l+w_n^l\right)^{\frac{6-N}{N-2}}\left(W_n^l+w_n^l\right) \right\|_{L^{\frac{2(N+2)}{N+4}}\left(I \times \mathbb{R}^N\right)} \nonumber\\
&&+ \varepsilon\|\nabla  r_n^l\|_{L^{\frac{2(N+2)}{N+4}}\left(I \times \mathbb{R}^N\right)} \nonumber\\
& \leq& C \sum_{\alpha=1}^{\frac{4}{N-2}}\interleave W_n^l+w_n^l\interleave_{I}^{\frac{4}{N-2}-\alpha} \interleave r_n^l\interleave_{I}^{\alpha+1}+C\interleave r_n^l\interleave_I \interleave W_n^l+w_n^l\interleave_I^{\frac{6-N}{N-2}}\left\|W_n^l+w_n^l) \right\|_{L^{\frac{2(N+2)}{N-2}}\left(I \times \mathbb{R}^N\right)}\nonumber\\
&&+o(1).
\end{eqnarray}
We set
 \begin{equation}\label{eq7.12}
   \delta_n^l=\left\|\nabla\left[ \sum\limits_{j=1}^l p\left(U_n^j\right)-p\left(W_n^l\right)\right]\right\|_{L^{\frac{2(N+2)}{N+4}}\left(\mathbb{R} \times \mathbb{R}^N\right)}+\left\|\nabla\left(p\left(W_n^l+w_n^l\right)-p\left(W_n^l\right)\right)\right\|_{L^{\frac{2(N+2)}{N+4}}\left(\mathbb{R} \times \mathbb{R}^N\right)}.
 \end{equation}
Combining \eqref{eq7.9}, \eqref{eq7.10}, \eqref{eq7.11} and \eqref{eq7.12}, we have
 \begin{eqnarray}\label{eq7.13}
&&\interleave r_n^l\interleave_I+\sup\limits_{t \in I} \|\nabla r_n^l(t)\|_{L^2(\mathbb{R}^N)}\nonumber\\
&\leq& C\left(\gamma_n^l(a)+\left\|\nabla f_n^l\right\|_{L^{\frac{2(N+2)}{N+4}}\left(I \times \mathbb{R}^N\right)}\right)  \nonumber\\
&\leq&C\sum_{\alpha=1}^{\frac{4}{N-2}}\interleave W_n^l+w_n^l\interleave_{I}^{\frac{4}{N-2}-\alpha} \interleave r_n^l \interleave_{I}^{\alpha+1}+ C\interleave r_n^l\interleave_I \interleave W_n^l+w_n^l\interleave_I^{\frac{6-N}{N-2}}\left\|W_n^l+w_n^l) \right\|_{L^{\frac{2(N+2)}{N-2}}\left(I \times \mathbb{R}^N\right)}\nonumber\\
&&+C\gamma_n^l(a)+C\delta_n^l.
 \end{eqnarray}

We prepare several propositions. The first one gives a uniform bound on $\left\|W_n^l+w_n^l\right\|_{\mathbb{R}}$.

\begin{proposition}\label{P7.1}
There exists $C>0$, such that, for every $l \geqslant 1$,
 \begin{equation}\label{eq7.14}
   \limsup\limits_{n \rightarrow \infty}\interleave W_n^l+w_n^l\interleave_{\mathbb{R}} \leq  C .
 \end{equation}
\end{proposition}
In view of \eqref{eq7.13} and Proposition \ref{P7.1}, we get for every $l \geq 1$ and $n \geq  N(l)$,
 \begin{eqnarray}\label{eq7.15}
&&\interleave r_n^l\interleave_I+\sup\limits_{t \in I} \|\nabla r_n^l(t)\|_{L^2(\mathbb{R}^N)}\nonumber\\
&\leq&C\left(\sum_{\alpha=1}^{\frac{4}{N-2}} \interleave r_n^l \interleave_{I}^{\alpha+1}+ C\interleave r_n^l\interleave_I  \left\|W_n^l+w_n^l) \right\|_{L^{\frac{2(N+2)}{N-2}}\left(I \times \mathbb{R}^N\right)}+C\gamma_n^l(a)+C\delta_n^l\right).
 \end{eqnarray}

The second proposition shows that under a suitable finite partition of $\mathbb{R}$, one can absorb the linear term in $\left\|r_n^l\right\|_I$ in the right-hand side of \eqref{eq7.15}.
\begin{proposition}\label{P7.2}
For every $\varepsilon>0$, there exists an $n$-dependent finite partition of $\mathbb{R}_{+}$
 \begin{equation}\label{eq7.16}
   \mathbb{R}_{+}=\underbrace{\left[0, a_n^1\right]}_{I_n^1} \cup \underbrace{\left[a_n^1, a_n^2\right]}_{I_n^2} \cup \cdots \cup \underbrace{\left[a_n^{p-1},+\infty\right]}_{I_n^p},
 \end{equation}
such that
\begin{equation}\label{eq7.17}
  \limsup _{n \rightarrow \infty}\left\|W_n^l+w_n^l\right\|_{L^{\frac{2(N+2)}{N-2}}\left(I_n^i \times \mathbb{R}^N\right)} \leqslant \varepsilon
\end{equation}
for every $1 \leq i \leq  p$ and every $l \geq  1$.
\end{proposition}
The third proposition proves the smallness of $\delta_n^l$.

\begin{proposition}\label{P7.3}
 \begin{equation}\label{eq7.18}
   \limsup _{n \rightarrow \infty} \delta_n^l\rightarrow 0 , \ l \rightarrow \infty.
 \end{equation}
\end{proposition}
Let us first assume the validity of Propositions \ref{P7.1}, \ref{P7.2}, \ref{P7.3}, and use them to achieve the proof of Lemma \ref{L7.3}. Applying \eqref{eq7.15} on an interval $I_n^i$, provided by Proposition \ref{P7.2}, it follows that
 \begin{equation}\label{eq7.19}
   \interleave r_n^l\interleave_{I_n^i}+\sup\limits_{t \in I_n^i} \|\nabla r_n^l(t)\|_{L^2(\mathbb{R}^N)} \leq  C\left(\gamma_n^l\left(a_n^i\right)+\delta_n^l+2 \varepsilon\left\|r_n^l\right\|_{I_n^i}+\sum_{\alpha=2}^{\frac{N+2}{N-2}}\interleave r_n^l \interleave_{I_n^i}^{\alpha }\right)
 \end{equation}
for every $l \geq 1$ and $n \geq  N(l)$. If we choose $\varepsilon$ so that $C \varepsilon<\frac{1}{2}$, we obtain
 \begin{equation}\label{eq7.20}
    \interleave r_n^l\interleave_{I_n^i}+\sup\limits_{t \in I_n^i} \|\nabla r_n^l(t)\|_{L^2(\mathbb{R}^N)} \leq C\left(\gamma_n^l\left(a_n^i\right)+\delta_n^l +\sum_{\alpha=2}^{\frac{N+2}{N-2}}\interleave r_n^l \interleave_{I_n^i}^{\alpha }\right) .
 \end{equation}
For $i=1$, \eqref{eq7.20} reads
 \begin{equation}\label{eq7.21}
    \interleave r_n^l\interleave_{I_n^1}+\sup\limits_{t \in I_n^1} \|\nabla r_n^l(t)\|_{L^2(\mathbb{R}^N)} \leq C\left(\gamma_n^l\left(0\right)+\delta_n^l +\sum_{\alpha=2}^{\frac{N+2}{N-2}}\interleave r_n^l \interleave_{I_n^1}^{\alpha }\right) .
 \end{equation}
Recall that, in view of definition of nonlinear profiles and $\gamma_n^l$, we have
 \begin{equation}\label{eq7.22}
   \gamma_n^l(0) \leqslant \sum_{j=1}^l \left\|(U-V)\left(-\frac{t_n}{h_n^2}\right)\right\|_{H^1(\mathbb{R}^N)} \rightarrow  0,\ n \rightarrow \infty
 \end{equation}
for every $l \geq 1$. Now we need the following classical lemma, see \cite{KSK2001}.
\begin{lemma}\label{L7.5}
Let $M=M(t)$ be a positive continuous function on $[0, T]$, such that $M(0)=0$ and, for every $t \in[0, T]$, we have
$$
M(t) \leqslant C\left(a+\sum_{\alpha=2}^{\frac{N+2}{N-2}} M^\alpha(t)\right),
$$
with $0<a<a_0=a_0(C)$. Then we have, for every $t \in[0, T]$,
$$
M(t) \leq  2 C a .
$$
\end{lemma}
From \eqref{eq7.18} and \eqref{eq7.22}, it follows that, for every $l$ large enough, there exists $N(l)$, such that
$$
\gamma_n^l(0)+\delta_n^l \leq  a_0(C)
$$
for every $n \geq  N(l)$. We denote by $M_n^l$ the function defined on $I_n^1=\left[0, a_n^1\right]$ by
$$
M_n^l(s)= \interleave r_n^l \interleave_{[0, s]}+\frac{s}{a_n^1}\sup\limits_{t \in I_n^1} \|\nabla r_n^l(s)\|_{L^2(\mathbb{R}^N)}   .
$$
It is clear that \eqref{eq7.20}  still holds if we replace $I_n^1=\left[0, a_n^1\right]$ by $[0, s]$ for every $s \in I_n^1$. Thus,
$$
M_n^l(s) \leq C\left(\gamma_n^l(0)+\delta_n^l+\sum_{\alpha=2}^{\frac{N+2}{N-2}}\left(M_n^l\right)^\alpha(s)\right) .
$$
Hence, $M_n^l$ satisfies the conditions of Lemma \ref{L7.5} for $l$ large and $n \geqslant N(l)$, and so
 \begin{equation}\label{eq7.23}
   M_n^l\left(a_n^1\right)=\interleave r_n^l\interleave_{I_n^1}+\sup\limits_{t \in I_n^1} \|\nabla r_n^l(t)\|_{L^2(\mathbb{R}^N)}  \leq  2 C\left(\gamma_n^l(0)+\delta_n^l\right)
 \end{equation}
for $l$ large and $n \geq N(l)$. Summing \eqref{eq7.18}, \eqref{eq7.22}, and \eqref{eq7.23}, it follows that
$$
\limsup _{n \rightarrow \infty}\left(\interleave r_n^l\interleave_{I_n^1}+\sup\limits_{t \in I_n^1} \|\nabla r_n^l(t)\|_{L^2(\mathbb{R}^N)} \right)\rightarrow 0,\ l \rightarrow \infty.
$$
On the other hand, in view of notation \eqref{eq6.99}, we have
$$
\gamma_n^l\left(a_n^1\right) \leq \sup\limits_{t \in I_n^1} \|\nabla r_n^l(t)\|_{L^2(\mathbb{R}^N)}  ,
$$
which implies
$$
\limsup _{n \rightarrow \infty} \gamma_n^l\left(a_n^1\right) \rightarrow 0 ,\ l \rightarrow \infty.
$$
This fact allows us to repeat the same argument on $I_n^2=\left[a_n^1, a_n^2\right]$. We get, similarly,
$$
 \interleave r_n^l \interleave_{I_n^2}+\sup\limits_{t \in I_n^2} \|\nabla r_n^l(t)\|_{L^2(\mathbb{R}^N)}  \leq C\left(\gamma_n^l\left(a_n^1\right)+\delta_n^l\right),
$$
so
$$
\limsup _{n \rightarrow \infty}\left(\interleave r_n^l\interleave_{I_n^2}+\sup\limits_{t \in I_n^2} \|\nabla r_n^l(t)\|_{L^2(\mathbb{R}^N)}\right)\rightarrow 0,\ l \rightarrow \infty .
$$
By iterating this process, we obtain that, for every $1 \leq  i \leq p$,
$$
\limsup _{n \rightarrow \infty}\left(\interleave r_n^l\interleave_{I_n^i}+\sup\limits_{t \in I_n^i} \|\nabla r_n^l(t)\|_{L^2(\mathbb{R}^N)}\right) \rightarrow 0,\ l \rightarrow \infty .
$$
Since $p$ does not depend on $n$ and $l$, we get
$$
\limsup _{n \rightarrow \infty}\left(\interleave r_n^l\interleave_{\mathbb{R}_{+}}+\sup _{t \in \mathbb{R}_{+}}\|\nabla r_n^l(t)\|_{L^2(\mathbb{R}^N)}\right)  \rightarrow 0,\ l \rightarrow \infty .
$$
A similar decomposition to \eqref{eq7.16} for $\mathbb{R}_{-}$can be provided. Arguing as above we prove
$$
\limsup _{n \rightarrow \infty}\left(\interleave r_n^l\interleave_{\mathbb{R}_{-}}+\sup _{t \in \mathbb{R}_{-}}\|\nabla r_n^l(t)\|_{L^2(\mathbb{R}^N)}\right)  \rightarrow 0,\ l \rightarrow \infty .
$$
Hence, we get
$$
\limsup _{n \rightarrow \infty}\left(\interleave r_n^l\interleave_{\mathbb{R}}+\sup _{t \in \mathbb{R}}\|\nabla r_n^l(t)\|_{L^2(\mathbb{R}^N)}\right)  \rightarrow 0,\ l \rightarrow \infty .
$$
Since
$$
E\left(r_n^l(t)\right) \leq  \|\nabla r_n^l(t)\|_{L^2(\mathbb{R}^N)}^2\left(1+\|\nabla r_n^l(t)\|_{L^2(\mathbb{R}^N)}^{\frac{4}{N-2}}\right)\rightarrow 0,\ l \rightarrow \infty  ,
$$
it follows that
$$
\limsup _{n \rightarrow \infty}\left(\interleave r_n^l\interleave_{\mathbb{R}}+\sup _{t \in \mathbb{R}} E \left(r_n^l(t)\right)\right) \rightarrow 0,\ l \rightarrow \infty .
$$

To complete the proof of the lemma, we need to prove Propositions \ref{P7.1}, \ref{P7.2}, and \ref{P7.3}. The proof of Propositions \ref{P7.1} and \ref{P7.2} can be found in \cite{KSK2001}. Since this paper considers general nonlinear terms, we need to modify the proof of the Proposition \ref{P7.3}. The proof is divided into two parts. In the first one we prove that, for every $l \geq  1$,
 \begin{equation}\label{eq7.25}
   \left\|\nabla\left(\sum_{j=1}^l p\left(U_n^j\right)-p\left(\sum_{j=1}^l U_n^j\right)\right)\right\|_{L^{\frac{2(N+2)}{N+4}}\left(\mathbb{R}\times\mathbb{R}^N\right)}\rightarrow 0,\ n \rightarrow \infty .
 \end{equation}
In the second and main one, we prove
 \begin{equation}\label{eq7.26}
\limsup _{n \rightarrow \infty}\left\|\nabla\left(p\left(W_n^l+w_n^l\right)-p\left(W_n^l\right)\right)\right\|_{L^{\frac{2(N+2)}{N+4}}\left(\mathbb{R}\times\mathbb{R}^N\right)}\rightarrow 0,\ l \rightarrow \infty .
 \end{equation}

\textbf{Part 1}. Arguing in the same way as in the proof of Lemma 2.7 and Proposition 3.4 in
 \cite{KSK2001}, we concludes the proof of \eqref{eq7.25}.

\textbf{Part 2}. By Leibnitz formula and H\"older inequality we get
 \begin{eqnarray}\label{eq7.27}
&&\left\|\nabla\left(p\left(W_n^l+w_n^l\right)-p\left(W_n^l\right)\right)\right\|_{L^{\frac{2(N+2)}{N+4}}\left(\mathbb{R}\times\mathbb{R}^N\right)} \nonumber\\
&\leq&C\left\| |\nabla W_n^l|\left(W_n^l+w_n^l \right)^{\frac{6-N}{N-2}}|w_n^l|\right\|_{L^{\frac{2(N+2)}{N+4}}\left(I \times \mathbb{R}^N\right)}+C\left\||\nabla w_n^l|\left(W_n^l \right)^{\frac{4}{N-2}} \right\|_{L^{\frac{2(N+2)}{N+4}}\left(I \times \mathbb{R}^N\right)} \nonumber\\
&&+ \varepsilon\|\nabla  w_n^l\|_{L^{\frac{2(N+2)}{N+4}}\left(I \times \mathbb{R}^N\right)} \nonumber\\
&\leq&C\left\|  \nabla W_n^l  \right\|_{L^{\frac{2(N+2)}{N }}\left(I \times \mathbb{R}^N\right)}\left\| W_n^l+w_n^l \right\|_{L^{\frac{2(N+2)}{N-2 }}\left(I \times \mathbb{R}^N\right)}^{\frac{6-N}{N-2}}\left\|  w_n^l \right\|_{L^{\frac{2(N+2)}{N-2}}\left(I \times \mathbb{R}^N\right)} \nonumber\\
&&+C\left\||W_n^l \nabla w_n^l|\left(W_n^l \right)^{\frac{6-N}{N-2}} \right\|_{L^{\frac{2(N+2)}{N+4}}\left(I \times \mathbb{R}^N\right)}+ \varepsilon\|\nabla  w_n^l\|_{L^{\frac{2(N+2)}{N+4}}\left(I \times \mathbb{R}^N\right)} \nonumber\\
& \leq& C\left(\left\|w_n^l\right\|_{L^{\frac{2(N+2)}{N-2}}\left(\mathbb{R}\times\mathbb{R}^N\right)}\interleave W_n^l+w_n^l\interleave_{\mathbb{R}}^{\frac{4}{N-2}}+\interleave W_n^l\interleave_{\mathbb{R}}^{\frac{6-N}{N-2}} \left\|W_n^l \nabla\left(w_n^l\right)\right\|_{L^{\frac{N+2}{N-1}}\left(\mathbb{R} \times\mathbb{R}^N \right)}\right)\nonumber\\
&&+o(1) .
 \end{eqnarray}

Putting together \eqref{eq5.99}, \eqref{eq7.14}  and \eqref{eq7.27}, it follows that if we prove
$$
\limsup _{n \rightarrow \infty}\left\|W_n^l \nabla\left(w_n^l\right)\right\|_{L^{\frac{N+2}{N-1}}\left(\mathbb{R} \times\mathbb{R}^N \right)} \rightarrow  0,\ l \rightarrow \infty ,
$$
then the proof of \eqref{eq7.26} is done. On the other hand, the convergence of the the series $\sum\limits_{j \geq  1}\left\|U^j\right\|_{L^{\frac{2(N+2)}{N-2}}\left(\mathbb{R} \times\mathbb{R}^N \right)} ^{\frac{2(N+2)}{N-2}}$ implies that, for every $\varepsilon>0$, there exists $l(\varepsilon)$, such that
$$
\sum\limits_{j \geq  l(\varepsilon)}\left\|U^j\right\|_{L^{\frac{2(N+2)}{N-2}}\left(\mathbb{R} \times\mathbb{R}^N \right)} ^{\frac{2(N+2)}{N-2}} \leq  \varepsilon^{\frac{2(N+2)}{N-2}} .
$$
In particular,
 \begin{eqnarray}\label{eq7.29}
&&\limsup _{n \rightarrow \infty}\left\|\left(\sum_{j=l(\varepsilon)}^l U_n^j\right) \nabla\left(w_n^l\right)\right\|_{L^{\frac{N+2}{N-1}}\left(\mathbb{R} \times\mathbb{R}^N \right)}^{\frac{2(N+2)}{N-2}}\nonumber\\
& \leq& \left(\sum\limits_{j \geq  l(\varepsilon)}\left\|U^j\right\|_{L^{\frac{2(N+2)}{N-2}}\left(\mathbb{R} \times\mathbb{R}^N \right)} ^{\frac{2(N+2)}{N-2}} \right) \limsup _{n \rightarrow \infty}\left\|\nabla w_n^l\right\|_{L^{\frac{2(N+2)}{N}}\left(\mathbb{R} \times\mathbb{R}^N \right)} ^{\frac{2(N+2)}{N-2}} \nonumber\\
& \leq&  C \varepsilon^{\frac{2(N+2)}{N-2}} .
 \end{eqnarray}
The last inequality follows from the fact that $\left\|\nabla w_n^l\right\|_{L^{\frac{2(N+2)}{N}}\left(\mathbb{R} \times\mathbb{R}^N \right)}$ is uniformly bounded. Thereby,
$$
\limsup\limits_{n \rightarrow \infty}\left\|W_n^l \nabla\left(w_n^l\right)\right\|_{L^{\frac{N+2}{N-1}}\left(\mathbb{R} \times\mathbb{R}^N \right)} \leq  \limsup\limits_{n \rightarrow \infty}\left\|W_n^{l(\varepsilon)} \nabla\left(w_n^l\right)\right\|_{L^{\frac{N+2}{N-1}}\left(\mathbb{R} \times\mathbb{R}^N \right)}+C \varepsilon,
$$
for $l \geqslant l(\varepsilon)$. Hence, our problem is reduced to prove that
$$
\limsup _{n \rightarrow \infty}\left\|W_n^{l_0} \nabla\left(w_n^l\right)\right\|_{L^{\frac{N+2}{N-1}}\left(\mathbb{R} \times\mathbb{R}^N \right)} \rightarrow 0,\ l \rightarrow \infty
$$
for every fixed $l_0 \geq  1$. Since $W_n^{l_0}=\sum\limits_{j=1}^{l_0} U_n^j$, we have to show that
$$
\limsup _{n \rightarrow \infty}\left\|U_n^j \nabla\left(w_n^l\right)\right\|_{L^{\frac{N+2}{N-1}}\left(\mathbb{R} \times\mathbb{R}^N \right)} \rightarrow  0,\ l \rightarrow \infty
$$
for every $l_0 \geq  j \geq 1$. A change of variables $x=h_n^j y+x_n, t=\left(h_n^j\right)^2 s+t_n$ gives
$$
\left\|U_n^j \nabla\left(w_n^l\right)\right\|_{L^{\frac{N+2}{N-1}}\left(\mathbb{R} \times\mathbb{R}^N \right)}=\left\|U^j \nabla\left(\widetilde{w}_n^l\right)\right\|_{L^{\frac{N+2}{N-1}}\left(\mathbb{R} \times\mathbb{R}^N \right)},
$$
where
$$
\widetilde{w}_l^n(y, s)=(h_n^j)^{\frac{N-2}{2}} w_n^l\left(h_n^j y+x_n,\left(h_n^j\right)^2 s+t_n  \right) .
$$
Observe that
$$
\left\|w_n^l\right\|_{L^{\frac{2(N+2)}{N-2}}\left(\mathbb{R} \times\mathbb{R}^N \right)}=\left\|\widetilde{w}_n^l\right\|_{L^{\frac{2(N+2)}{N-2}}\left(\mathbb{R} \times\mathbb{R}^N \right)} \ \text { and } \ \left\|\nabla w_n^l\right\|_{L^{\frac{2(N+2)}{N}}\left(\mathbb{R} \times\mathbb{R}^N \right) }=\left\|\nabla \widetilde{w}_n^l\right\|_{L^{\frac{2(N+2)}{N}}\left(\mathbb{R} \times\mathbb{R}^N \right)} .
$$
By density, we can take $U^j \in C_0^{\infty}$. Using H\"older inequality, it is enough to prove
 \begin{equation}\label{eq7.30}
   \limsup _{n \rightarrow \infty}\left\|\nabla \widetilde{w}_n^l\right\|_{L^2(\mathscr{B})} \rightarrow 0,\ l \rightarrow \infty,
 \end{equation}
where $\mathscr{B}$ is a fixed compact of $\mathbb{R}_t \times \mathbb{R}_x^N$. This fact will follow from the following Lemma which can be founded in \cite{KSK2001}.

\begin{lemma}\label{L7.6}
Let $\mathscr{B}$ be a compact set of $\mathbb{R}_t \times \mathbb{R}_x^N$. Then, for every $\varepsilon>0$, there exists a constant $C_{\varepsilon}$ such that
$$
\|\nabla v\|_{L^2(\mathscr{B})} \leq C_{\varepsilon}\|v\|_{L^{\frac{2(N+2)}{N-2}}\left(\mathbb{R} \times\mathbb{R}^N \right)}+\varepsilon\|\nabla v(0, \cdot)\|_{L^2\left(\mathbb{R}^N\right)}
$$
for every $v(t, x)$ solution to linear Schr\"odinger equation \eqref{eq7.5}.
\end{lemma}
Applying Lemma \ref{L7.6} and \eqref{eq5.99}, we know that \eqref{eq7.30} holds. This concludes the proof of Lemma \ref{L7.3}.
\end{proof}

\begin{proof}[\bf Proof of Lemma \ref{L5.4} ]
\eqref{eq5.14} is a consequence of the proof of Corollary 1.9 in \cite{KSK2001}, here, we use the hypothesis $\left\|e^{i t \Delta} v_{0, n}\right\|_{L_{t,x}^{\frac{2(N+2)}{N-2}}} \geq \rho>0$. \eqref{eq5.17} follows from the orthogonality of $\left(\lambda_{j, n} ; x_{j, n} ; t_{j, n}\right)$ as in the proof of \eqref{eq5.16}. The rest of the lemma is contained in the proof of Lemmas \ref{L7.2} and \ref{L7.3}.
\end{proof}

\section{Compactness of critical element}

Let us consider the statement:

$(SC)$ For all $u_0 \in H^1(\mathbb{R}^N)$, with
\begin{equation*}
   \int_{\mathbb{R}^N}\left|\nabla u_0\right|^2dx<\int_{\mathbb{R}^N}|\nabla W|^2dx\ \text{and}\ E\left(u_0\right)<E(W),
\end{equation*}
if $u$ is the corresponding solution to the \eqref{eq1.1}, with maximal interval of existence $I$, then $I=(-\infty,+\infty)$ and $\|u\|_{L_{(-\infty, +\infty)}^{\frac{2(N+2)}{N-2}}W^{1,{\frac{2N(N+2)}{N^2+4}}}}<\infty$.

We say that $(S C)(u_0)$ holds if for this particular $u_0$(such as, take $\left.u\right|_{t=0}=u_0)$, with
\begin{equation*}
   \int_{\mathbb{R}^N}\left|\nabla u_0\right|^2dx<\int_{\mathbb{R}^N}|\nabla W|^2dx\ \text{and}\ E\left(u_0\right)<E(W)
\end{equation*}
and $u$ the corresponding solution to the (CP), with maximal interval of existence $I$, we have $I=(-\infty,+\infty)$ and $\|u\|_{L_{(-\infty, +\infty)}^{\frac{2(N+2)}{N-2}}W^{1,{\frac{2N(N+2)}{N^2+4}}}}<\infty$.

Note that, because of Lemma \ref{L3.2}, if $\left\|\nabla u_0\right\|_{L^2} \leq \bar{\delta},(S C)\left(u_0\right)$ holds. Thus, in light of Corollary \ref{c5.2}, there exists $\eta_0>0$ such that, if $u_0$ is as in $(\mathrm{SC})$ and $E\left(u_0\right)<\eta_0$, then $(S C)\left(u_0\right)$ holds. Moreover, for any $u_0$ as in (SC), $E\left(u_0\right) \geq 0$, in light of Theorem \ref{t5.1}. Thus, there exists a number $E_C$, with $\eta_0 \leq E_C \leq E(W)$, such that, if $u_0$ is as in (SC) and $E\left(u_0\right)<E_C$, $(S C)\left(u_0\right)$ holds and $E_C$ is optimal with this property. For the rest of this section we will assume that $E_C<E(W)$. We now prove that there exits a critical element $u_{0, C}$ at the critical level of energy $E_C$ so that $(S C)\left(u_{0, C}\right)$ does not hold and from the minimality, this element has a compactness property up to the symmetry of this equation. This is in fact a general principle which follows from the concentration compactness ideas. More precisely,
\begin{lemma}\label{L5.2}
There exists $u_{0, C}$ in $H^1$, with
$$
E(u_{0, C})=E_C<E(W), \quad \int_{\mathbb{R}^N}\left|\nabla u_{0, C}\right|^2dx<\int_{\mathbb{R}^N}|\nabla W|^2dx
$$
such that, if $u_C$ is the solution of \eqref{eq1.1} with data $u_{0, C}$, and maximal interval of existence $I, 0 \in \stackrel{\circ}{I}$, then $\left\|u_C\right\|_{L_{I}^{\frac{2(N+2)}{N-2}}W^{1,{\frac{2N(N+2)}{N^2+4}}}}=+\infty$.
\end{lemma}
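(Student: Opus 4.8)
The plan is to run the standard Kenig--Merle concentration--compactness and rigidity scheme, with Lemma \ref{L5.4} (profile decomposition), Lemma \ref{L7.3} (nonlinear profile approximation) and Proposition \ref{P5.1} (long-time perturbation) doing the heavy lifting. First I would use the optimality of $E_C$ to select a sequence $u_{0,n}\in H^1$ with $\int_{\mathbb{R}^N}|\nabla u_{0,n}|^2\,dx<\int_{\mathbb{R}^N}|\nabla W|^2\,dx$, $E(u_{0,n})\searrow E_C$, such that $(SC)(u_{0,n})$ fails, and (after a time translation) $0$ in the interior of the maximal interval $I_n$ of the corresponding solution $u_n$. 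Two a priori facts follow. Since $E_C<E(W)$, for $n$ large $E(u_{0,n})\le(1-\delta_0)E(W)$ with a fixed $\delta_0>0$, so Theorem \ref{t5.1} gives the \emph{uniform} sub-threshold bound $\int_{\mathbb{R}^N}|\nabla u_{0,n}|^2\,dx\le(1-\delta_1)\int_{\mathbb{R}^N}|\nabla W|^2\,dx$; and $\|e^{it\Delta}u_{0,n}\|_{L_{t,x}^{\frac{2(N+2)}{N-2}}}\ge\rho$, for otherwise the small-data theory underlying Lemma \ref{L3.1} and Proposition \ref{P5.1} would force $(SC)(u_{0,n})$.

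Next I would apply Lemma \ref{L5.4} to $\{u_{0,n}\}$, obtaining linear profiles $V_j^l=e^{it\Delta}V_{0,j}$, orthogonal frames $(\lambda_{j,n};x_{j,n};t_{j,n})$, a remainder $w_n$ whose free evolution is small in $L_{t,x}^{\frac{2(N+2)}{N-2}}$, and the decoupling identities \eqref{eq5.16} and \eqref{eq5.17}. Since $e^{it\Delta}$ is unitary on $\dot H^1$, \eqref{eq5.16} together with the uniform bound yields $\int|\nabla V_{0,j}|^2<\int|\nabla W|^2$ and $\int|\nabla w_n|^2<\int|\nabla W|^2$ for $n$ large, so by Corollary \ref{c5.1} every term in \eqref{eq5.17} is nonnegative, and they sum to $E_C+o(1)$. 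The argument splits here. Suppose we are \emph{not} in the case ``exactly one nontrivial profile and $w_n\to0$ in $H^1$''. Then, using \eqref{eq5.14} to pin a definite amount of gradient (hence, by Corollary \ref{c5.2}, of energy) in the first profile and combining with the sign information, each limiting profile energy $\lim_n E\big(V_j^l(-t_{j,n}/\lambda_{j,n}^2)\big)=E(U^j)$ satisfies $E(U^j)\le E_C-\delta<E(W)$, while $\int|\nabla U^j|^2=\int|\nabla V_{0,j}|^2<\int|\nabla W|^2$. By Theorem \ref{t5.1} and the very definition of $E_C$, $(SC)(U^j)$ holds for every $j$, so each nonlinear profile $U^j$ (Definition \ref{D2.4}) is global with $\interleave U^j\interleave_{\mathbb{R}}<\infty$ and the series $\sum_j\|U^j\|_{L_{t,x}^{\frac{2(N+2)}{N-2}}}^{\frac{2(N+2)}{N-2}}$ converges. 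Feeding this into Lemma \ref{L7.3}, Proposition \ref{P7.1} and Proposition \ref{P5.1} bounds $\|u_n\|_{L_{I_n}^{\frac{2(N+2)}{N-2}}W^{1,{\frac{2N(N+2)}{N^2+4}}}}$ for $n$ large, contradicting the failure of $(SC)(u_{0,n})$.

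Consequently there is exactly one nontrivial profile, $w_n\to0$ in $H^1$, and (by radiality $x_{1,n}\equiv0$, and by the scaling structure one may normalize $\lambda_{1,n}\equiv1$) $u_{0,n}=e^{-it_{1,n}\Delta}V_{0,1}+w_n$. If $-t_{1,n}\to\pm\infty$ then $e^{it\Delta}V_{0,1}$ has a vanishing Strichartz tail near $\pm\infty$, and the same perturbative reasoning (Lemma \ref{L7.3} with $l=1$ and Proposition \ref{P5.1}) forces $\|u_n\|_{L_{I_n}^{\frac{2(N+2)}{N-2}}W^{1,{\frac{2N(N+2)}{N^2+4}}}}<\infty$ for $n$ large, again a contradiction. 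Hence $-t_{1,n}\to\bar t\in\mathbb{R}$ along a subsequence, and $u_{0,n}\to u_{0,C}:=e^{i\bar t\Delta}V_{0,1}$ in $H^1$. Continuity of $E$ on bounded sets of $H^1$ gives $E(u_{0,C})=E_C<E(W)$, passing to the limit in the uniform bound gives $\int|\nabla u_{0,C}|^2\le(1-\delta_1)\int|\nabla W|^2<\int|\nabla W|^2$, and local well-posedness (Lemma \ref{L3.1}) gives $0\in\stackrel{\circ}{I}$. Finally, if $(SC)(u_{0,C})$ held, applying Proposition \ref{P5.1} with $\tilde u=u_C$, $u_0=u_{0,n}$, $e\equiv0$ (so that $\|u_{0,n}-u_C(0)\|_{H^1}\to0$) would give $\|u_n\|_{L_{I_n}^{\frac{2(N+2)}{N-2}}W^{1,{\frac{2N(N+2)}{N^2+4}}}}<\infty$ for $n$ large --- impossible. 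Therefore $\|u_C\|_{L_{I}^{\frac{2(N+2)}{N-2}}W^{1,{\frac{2N(N+2)}{N^2+4}}}}=+\infty$, which is the claim.

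The main obstacle is the dichotomy step: proving that a minimizing sequence for $E_C$ cannot split into two or more bubbles (nor leave a nontrivial remainder behind). This rests on two inputs that are already available here but were nontrivial to set up --- the nonlinear profile approximation of Lemma \ref{L7.3}, which for the \emph{general} nonlinearity required the modifications of Section 7, and the scattering statement with Strichartz norm controlled by the energy for sub-threshold data. A secondary technical nuisance, absent in the power-nonlinearity case, is that \eqref{eq1.1} is not scale invariant, so the rescaled bubbles $\lambda_{j,n}^{-(N-2)/2}U^j\big((x-x_{j,n})/\lambda_{j,n},(t-t_{j,n})/\lambda_{j,n}^2\big)$ are only approximate solutions; these scaling defects have to be carried as part of the error terms $r_n^l$ in Lemma \ref{L7.3}, and keeping the orthogonality of the frames under all the perturbation estimates is where most of the bookkeeping goes.
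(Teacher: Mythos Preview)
Your approach is the standard Kenig--Merle scheme and matches the paper's in spirit, but there is one genuine gap and one imprecision worth flagging.

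The gap is in ruling out $-t_{1,n}\to\pm\infty$. Say $-t_{1,n}\to+\infty$. Then the free evolution $e^{it\Delta}u_{0,n}\approx e^{i(t-t_{1,n})\Delta}V_{0,1}$ has small Strichartz norm only on $[0,+\infty)$ (the tail near $+\infty$), \emph{not} on $(-\infty,0]$; the small-data/perturbation argument therefore yields $\|u_n\|_{L^{\frac{2(N+2)}{N-2}}_{[0,\infty)}W^{1,\frac{2N(N+2)}{N^2+4}}}<\infty$ but says nothing about the backward-in-time Strichartz norm. Since the failure of $(SC)(u_{0,n})$ could perfectly well occur for $t<0$, this is not a contradiction, and your definition $u_{0,C}:=e^{i\bar t\Delta}V_{0,1}$ with $\bar t$ finite is therefore not justified. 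The paper sidesteps the issue entirely: it takes $u_C$ to be the \emph{nonlinear profile} $U_1$ associated to $(V_{0,1},\{s_n\})$ (Definition \ref{D2.1}), which is well-defined via the wave operator even when $s_n\to\pm\infty$, then fixes any $\bar s$ in the maximal interval of $U_1$ and time-translates so that $\bar s=0$. The remaining alternative --- that $U_1$ has \emph{finite} Strichartz norm on its maximal interval --- is excluded not by a tail argument but by Lemma \ref{L5.5} under hypothesis \eqref{eq8.2}, which is exactly the packaged perturbation statement you are trying to reproduce by hand.

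The imprecision is your invocation of Lemma \ref{L7.3} in the multi-profile step. As stated and proved in the paper, Lemma \ref{L7.3} carries a global smallness assumption $\limsup_n\|u_n(0,\cdot)\|_{H^1}<\rho$ (so that every nonlinear profile is automatically small and scattering); in your situation the first few profiles are not small, and what you want is precisely the content of Lemma \ref{L5.5} under hypothesis \eqref{eq8.1}. Steps 1--5 of the proof of Lemma \ref{L5.5} are where the orthogonality estimates, the uniform bound \eqref{eq8.9} on $H_{n,\varepsilon_0}$, and Proposition \ref{P5.1} are actually assembled; you should cite that lemma rather than Lemma \ref{L7.3}. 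Finally, your normalisation $\lambda_{1,n}\equiv1$ is not innocent for a non-scale-invariant nonlinearity (as you yourself observe at the end); the paper also rescales at this step and the point deserves more care in either presentation, but defining $u_C$ as the nonlinear profile --- a solution of the original, un-rescaled equation \eqref{eq1.1} --- is what makes the paper's conclusion legitimate regardless of the behaviour of $\lambda_{1,n}$.
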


\begin{lemma}\label{L5.3}
Assume $u_C$ is as in Lemmas \ref{L5.2} and  $\left\|u_C\right\|_{L_{I_{+}}^{\frac{2(N+2)}{N-2}}W^{1,{\frac{2N(N+2)}{N^2+4}}}}=+\infty$, where $I_{+}=(0,+\infty) \cap I$. Then there exists $x(t) \in \mathbb{R}^N$ and $\lambda(t) \in \mathbb{R}^{+}$, for $t \in I_{+}$, such that
$$
K=\left\{v(x, t): v(x, t)=\frac{1}{\lambda(t)^{\frac{N-2}{2}}} u_C\left(\frac{x-x(t)}{\lambda(t)}, t\right)\right\}
$$
has the property that $\overline{K}$ is compact in $H^1$. A corresponding conclusion is reached if
\begin{equation*}
  \left\|u_C\right\|_{L_{I_{-}}^{\frac{2(N+2)}{N-2}}W^{1,{\frac{2N(N+2)}{N^2+4}}}}=+\infty,
\end{equation*}
where $I_{-}=(-\infty, 0) \cap I$.
\end{lemma}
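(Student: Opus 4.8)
The plan is to run the concentration--compactness/rigidity scheme of Kenig--Merle and Keraani; throughout write $\|v\|_{S(J)}:=\|v\|_{L^{\frac{2(N+2)}{N-2}}_J W^{1,\frac{2N(N+2)}{N^2+4}}}$ for a time interval $J$. It is enough to prove the sequential statement: for every $\{t_n\}\subset I_{+}$ there are a subsequence (not relabelled) and parameters $\lambda_n\in\mathbb R^{+}$, $x_n\in\mathbb R^N$ such that $\lambda_n^{\frac{N-2}{2}}u_C(\lambda_n\,\cdot\,+x_n,t_n)$ converges in $H^1$; the modulation functions $\lambda(t),x(t)$ and the precompactness of $\overline K$ then follow by the usual selection argument (as in \cite{KSK2001,KCEMF2006}). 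First I would note that $\{u_C(\cdot,t_n)\}$ is bounded in $H^1$: Theorem \ref{t5.1} and Corollary \ref{c5.2} give $\|\nabla u_C(t)\|_{L^2}^2\lesssim E(u_{0,C})\lesssim\|\nabla W\|_{L^2}^2$ on $I_{+}$, while conservation of mass gives $\|u_C(t)\|_{L^2}=\|u_{0,C}\|_{L^2}$. Since $t\mapsto u_C(t)$ is continuous into $H^1$, we may assume $t_n\to\sup I_{+}$. Finally, the hypothesis $\|u_C\|_{S(I_{+})}=\infty$ together with Lemma \ref{L3.1} (finiteness of $\|u_C\|_{S}$ on every compact subinterval of $I$) gives $\|u_C\|_{S((t_0,\sup I_{+}))}=\infty$ for every $t_0\in I_{+}$; this is the fact we shall contradict.

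Next I would apply the decomposition of Lemma \ref{L5.4} to $v_{0,n}:=u_C(\cdot,t_n)$: profiles $V_{0,j}$, linear evolutions $V_j^l=e^{it\Delta}V_{0,j}$, scales $\lambda_{j,n}$, orthogonal cores with $s_{j,n}:=-t_{j,n}/\lambda_{j,n}^2\to s_j\in[-\infty,+\infty]$, remainders $w_n^J$ with $\|e^{it\Delta}w_n^J\|_{S(\mathbb R)}$ small for $J,n$ large, and the decouplings \eqref{eq5.16}, \eqref{eq5.17}. Because $\|\nabla v_{0,n}\|_{L^2}^2\le(1-\delta_1)\|\nabla W\|_{L^2}^2$ by Theorem \ref{t5.1}, \eqref{eq5.16} forces $\|\nabla V_{0,j}\|_{L^2}<\|\nabla W\|_{L^2}$ and $\|\nabla w_n^J\|_{L^2}<\|\nabla W\|_{L^2}$ for $n$ large, so by Corollary \ref{c5.1} every term in \eqref{eq5.17} is $\ge0$, and by Corollary \ref{c5.2} a nonzero profile has energy bounded below by a positive constant. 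Combined with $\sum_j E\!\left(V_j^l(s_{j,n})\right)+E(w_n^J)=E_C+o(1)$, this is the mechanism that will isolate a single profile.

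The core step is to show that exactly one profile, say $V_{0,1}$, is nonzero and $\|\nabla w_n^J\|_{L^2}\to0$. If this fails (two nonzero profiles, or one profile plus a non-vanishing remainder), then each nonlinear profile $U^j$ has $E(U^j)<E_C$, and Theorem \ref{t5.1} applied to $U^j$ gives $\|\nabla U^j(t)\|_{L^2}<\|\nabla W\|_{L^2}$ for all $t$; by the definition of $E_C$, $(SC)(U^j)$ holds, so $U^j$ is global with $\|U^j\|_{S(\mathbb R)}<\infty$, and $\sum_j\|U^j\|_{S(\mathbb R)}$ is summable (small-data scattering for the profiles of small energy, finitely many of larger energy, total energy bounded by \eqref{eq5.17}). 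Inserting this into Lemma \ref{L7.3}, together with the smallness of $e^{it\Delta}w_n^l$ and of the error $r_n^l$, yields $\limsup_n\|u_C(\cdot,t_n+\cdot)\|_{S(\mathbb R)}\le C(E_C)<\infty$, i.e. $\|u_C\|_{S((t_n,\sup I_{+}))}<\infty$, contradicting the last observation of the first paragraph. I expect this single-profile reduction to be the main obstacle: it requires a uniform bound on $\sum_j\|U^j\|_{S(\mathbb R)}$ and the propagation of the perturbation estimate of Proposition \ref{P5.1} through Lemma \ref{L7.3} across the $n$-dependent partition of $\mathbb R$. Granting it, the single-profile alternative holds, and then conservation of mass (together with \eqref{eq5.16} and the energy-trapping bound $\|\nabla u_C(\cdot,t_n)\|_{L^2}^2\gtrsim E_C>0$, which rules out $\lambda_{1,n}\to\infty$) confines the scales $\lambda_{1,n}$ to a compact subset of $(0,\infty)$ and forces $w_n:=w_n^J\to0$ in $H^1$ --- a point that is automatic in the $\dot H^1$ setting of \cite{KCEMF2006} but must be checked here because the ambient space is $H^1$.

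It remains to control $s_1$. Writing $u_C(\cdot,t_n)=\lambda_{1,n}^{-\frac{N-2}{2}}\big(e^{is_{1,n}\Delta}V_{0,1}\big)\!\big((\cdot-x_{1,n})/\lambda_{1,n}\big)+w_n$, the scale-invariance of the $\dot H^1$-Strichartz norms and Lemma \ref{L2.1} give $\|e^{it\Delta}u_C(\cdot,t_n)\|_{S((0,\infty))}=\|e^{it\Delta}V_{0,1}\|_{S((s_{1,n},\infty))}+o(1)$, and likewise on $(-\infty,0)$ with $(-\infty,s_{1,n})$. If $s_1=+\infty$ the first quantity tends to $0$, so a Strichartz bootstrap as in Lemma \ref{L3.1} gives $\|u_C\|_{S((t_n,\sup I_{+}))}<\infty$, a contradiction; if $s_1=-\infty$ the backward linear norm tends to $0$, the same bootstrap propagates $u_C$ backward with $\|u_C\|_{S((-\infty,t_n))}\to0$, impossible since (along a monotone subsequence of $t_n$) this quantity is nondecreasing in $n$ and $u_C\not\equiv0$. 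Hence $s_1\in\mathbb R$, $e^{is_{1,n}\Delta}V_{0,1}\to e^{is_1\Delta}V_{0,1}$ in $H^1$, and $\lambda_{1,n}^{\frac{N-2}{2}}u_C(\lambda_{1,n}\,\cdot\,+x_{1,n},t_n)\to e^{is_1\Delta}V_{0,1}$ in $H^1$, which is the desired compactness; the case $\big\|u_C\big\|_{L^{\frac{2(N+2)}{N-2}}_{I_{-}}W^{1,\frac{2N(N+2)}{N^2+4}}}=+\infty$ is identical after reversing time.
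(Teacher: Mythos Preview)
Your approach is essentially the same as the paper's: apply the profile decomposition (Lemma~\ref{L5.4}) to $u_C(\cdot,t_n)$, reduce to a single profile by showing that otherwise all nonlinear profiles scatter below $E_C$ and a perturbation argument forces $\|u_C\|_{S(I_+)}<\infty$, then rule out $s_{1,n}\to\pm\infty$ by a small-data/Strichartz argument. The paper packages your ``core step'' as Lemma~\ref{L5.5} and invokes it as a black box inside a contradiction framed around \eqref{eq5.21}, but the mechanism is identical.

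Two points deserve correction. First, the tool you cite for the single-profile reduction is off: Lemma~\ref{L7.3} is stated (and proved) under the smallness hypothesis $\limsup_n\|u_n(0,\cdot)\|_{H^1}<\rho$, which $u_C(\cdot,t_n)$ does not satisfy. The correct engine is Proposition~\ref{P5.1} applied with $\tilde u=H_{n,\varepsilon_0}=\sum_j \lambda_{j,n}^{-(N-2)/2}U_j\big((\cdot-x_{j,n})/\lambda_{j,n},(t-t_{j,n})/\lambda_{j,n}^2\big)$ and $e=R_{n,\varepsilon_0}$, exactly as in Steps~3--5 of the proof of Lemma~\ref{L5.5}; this is what actually yields $\|u_C\|_{S((t_n,\sup I_+))}<\infty$. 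Second, your attempt to confine $\lambda_{1,n}$ to a compact subset of $(0,\infty)$ is both unnecessary and incorrectly argued: the $\dot H^1$-critical rescaling leaves $\|\nabla(\cdot)\|_{L^2}$ invariant, so the energy-trapping lower bound says nothing about $\lambda_{1,n}\to\infty$, and mass conservation of $u_C$ does not transfer to the profile because only $\|\nabla w_n\|_{L^2}\to0$ is available (no $L^2$ decoupling is asserted in Lemma~\ref{L5.4}). The statement of the lemma allows an arbitrary $\lambda(t)\in\mathbb R^+$, and the paper's own proof makes no attempt to bound it. Relatedly, both your argument and the paper's obtain only $\|\nabla w_n\|_{L^2}\to0$, hence the convergence $\lambda_{1,n}^{(N-2)/2}u_C(\lambda_{1,n}\cdot+x_{1,n},t_n)\to e^{is_1\Delta}V_{0,1}$ is established in $\dot H^1$ rather than in $H^1$; this is a shared looseness, but since the downstream use of $\overline K$ (Lemma~\ref{L9.2}\,iv) and Theorem~\ref{t9.1}) only needs tightness of $|\nabla v|^2$, $|v|^{2^*}$ and $|v|^2/|x|^2$, all controlled by $\dot H^1$, it does not affect the application.
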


\begin{lemma}\label{L5.5}
Let $\left\{z_{0, n}\right\} \in H^1$, with
\begin{equation*}
  \int_{\mathbb{R}^N}\left|\nabla z_{0, n}\right|^2dx<\int_{\mathbb{R}^N}|\nabla W|^2dx \  \text{and}\  E\left(z_{0, n}\right) \rightarrow E_C
\end{equation*}
and with $\left\|e^{i t \Delta} z_{0, n}\right\|_{L_{(-\infty, +\infty)}^{\frac{2(N+2)}{N-2}}W^{1,{\frac{2N(N+2)}{N^2+4}}}} \geq \rho$, where $\rho$ as in Lemma \ref{L3.1}. Let $\left\{V_{0, j}\right\}$ be as in Lemma \ref{L5.4}. Assume that one of the two hypothesis
 \begin{equation}\label{eq8.1}
   \varliminf_{n \rightarrow \infty} E(V_1^l(-\frac{t_{1, n}} { \lambda_{1, n}^2}))<E_C
 \end{equation}
or after passing to a subsequence, we have that, with $s_n=-\frac{t_{1, n}} { \lambda_{1, n}^2}$, $E(V_1^l(s_n)) \rightarrow E_C$, and $s_n \rightarrow s_* \in[-\infty,+\infty]$, and if $U_1$ is the non-linear profile (see Definition \ref{D2.1} and \ref{D2.4}) associated to $\left(V_{0,1},\left\{s_n\right\}\right)$ we have that the maximal interval of existence of $U_1$ is $I=(-\infty,+\infty)$ and $\left\|U_1\right\|_{L_{(-\infty, +\infty)}^{\frac{2(N+2)}{N-2}}W^{1,{\frac{2N(N+2)}{N^2+4}}}}<\infty$ and
 \begin{equation}\label{eq8.2}
\varliminf_{n \rightarrow \infty} E(V_1^l(-\frac{t_{1, n}}{\lambda_{1, n}^2}))=E_C .
 \end{equation}
Then (after passing to a subsequence), for $n$ large, if $z_n$ is the solution of \eqref{eq1.1} with data at $t=0$ equal to $z_{0, n}$, then $(S C)\left(z_{0, n}\right)$ holds.
\end{lemma}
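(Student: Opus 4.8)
The plan is to carry out the profile‑reconstruction step of the concentration–compactness scheme. We shall first apply Lemma~\ref{L5.4} to $\{z_{0,n}\}$ (the hypothesis $\|e^{it\Delta}z_{0,n}\|_{L^{2(N+2)/(N-2)}_{t,x}}\ge\rho$ guarantees $V_{0,1}\neq0$ via \eqref{eq5.14}), obtaining the decomposition \eqref{eq5.15} together with the kinetic and energy splittings \eqref{eq5.16} and \eqref{eq5.17}. After passing to a subsequence we may assume that all the limits $E\big(V_j^l(-t_{j,n}/\lambda_{j,n}^2)\big)\to E_j$, $E(w_n)\to E_w$ and $-t_{j,n}/\lambda_{j,n}^2\to\bar t_j\in[-\infty,+\infty]$ exist. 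Since the linear flow preserves the $\dot H^1$ norm, $\int|\nabla V_j^l(-t_{j,n}/\lambda_{j,n}^2)|^2=\int|\nabla V_{0,j}|^2$, so \eqref{eq5.16} and $\int|\nabla z_{0,n}|^2<\int|\nabla W|^2$ give $\int|\nabla V_{0,j}|^2<\int|\nabla W|^2$ for every $j$ and $\int|\nabla w_n|^2<\int|\nabla W|^2$ for $n$ large; Corollary~\ref{c5.1} then yields $E_j\ge0$ and $E_w\ge0$, whence \eqref{eq5.17} forces $\sum_jE_j+E_w=E_C$.

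We shall then distinguish the two hypotheses. Suppose first that \eqref{eq8.1} holds, i.e.\ $E_1<E_C$ along the subsequence. The plan is to check first that $E_1>0$: otherwise, for $n$ large one would have $E\big(V_1^l(-t_{1,n}/\lambda_{1,n}^2)\big)<(1-\delta_0)E(W)$ with kinetic energy below $\int|\nabla W|^2$, so Corollary~\ref{c5.2} would force $E_1\simeq\int|\nabla V_{0,1}|^2$, hence $\nabla V_{0,1}=0$ and $V_{0,1}=0$, contradicting \eqref{eq5.14}. Since the $E_j$ and $E_w$ are nonnegative and sum to $E_C$, it then follows that $E_j\le E_C-E_1<E_C$ for every $j$. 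Consequently, for $n$ large, the datum $V_j^l(-t_{j,n}/\lambda_{j,n}^2)$, hence also the nonlinear profile $U^j$ associated with $(V^j,\mathbf{h}^j,\mathbf{z}^j)$ in the sense of Definition~\ref{D2.4} (which is $H^1$-close to it), has kinetic energy below $\int|\nabla W|^2$ and energy below $E_C$; by the definition of $E_C$, $(SC)(U^j)$ holds, so $U^j$ is global with $\|U^j\|_{L^{2(N+2)/(N-2)}W^{1,2N(N+2)/(N^2+4)}(\mathbb R)}<\infty$. We then invoke Lemma~\ref{L7.3}, which provides \eqref{eq7.8}, namely $z_n=\sum_{j=1}^l U_n^j+w_n^l+r_n^l$ with $r_n^l$ negligible as $l,n\to\infty$; combining this with the uniform bound of Proposition~\ref{P7.1} on $\interleave W_n^l+w_n^l\interleave_{\mathbb R}$, the convergence of $\sum_j\|U^j\|_{L^{2(N+2)/(N-2)}_{t,x}}^{2(N+2)/(N-2)}$ (which follows from the gradient splitting, Corollary~\ref{c5.2} and Strichartz for the tail profiles), and the orthogonality of the cores, we obtain $\limsup_n\|z_n\|_{L^{2(N+2)/(N-2)}_{t,x}(\mathbb R\times\mathbb R^N)}<\infty$; a continuity argument as in Section~3 upgrades this to a uniform bound on $\|z_n\|_{L^{2(N+2)/(N-2)}W^{1,2N(N+2)/(N^2+4)}(\mathbb R)}$, so $(SC)(z_{0,n})$ holds for $n$ large.

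Suppose now that \eqref{eq8.2} holds: $E_1=E_C$, $-t_{1,n}/\lambda_{1,n}^2\to s_*$, and, by hypothesis, the nonlinear profile $U_1$ attached to $(V_{0,1},\{s_n\})$ (Definition~\ref{D2.1}) is global with finite $L^{2(N+2)/(N-2)}W^{1,2N(N+2)/(N^2+4)}$ norm. From $\sum_jE_j+E_w=E_C$ we would get $E_j=0$ for $j\ge2$ and $E_w=0$, and the previous argument (via Corollary~\ref{c5.2}) forces $V_{0,j}=0$ for $j\ge2$ and $\|\nabla w_n\|_{L^2}\to0$. Hence, by \eqref{eq5.15}, $z_{0,n}$ coincides, up to a term whose $\dot H^1$ norm tends to $0$ and whose linear Strichartz norm is $\le\varepsilon_0$, with $\lambda_{1,n}^{-(N-2)/2}V_1^l\big((\cdot-x_{1,n})/\lambda_{1,n},s_n\big)$, which is in turn $H^1$-close to $\lambda_{1,n}^{-(N-2)/2}U_1\big((\cdot-x_{1,n})/\lambda_{1,n},s_n\big)$. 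Setting $\tilde u_n(x,t):=\lambda_{1,n}^{-(N-2)/2}U_1\big((x-x_{1,n})/\lambda_{1,n},(t-t_{1,n})/\lambda_{1,n}^2\big)$ and $e_n:=i\partial_t\tilde u_n+\Delta\tilde u_n+f(\tilde u_n)$, we would then apply Proposition~\ref{P5.1} with this $\tilde u_n$, with $u_0=z_{0,n}$, $t_0=0$, $E=\|U_1\|_{L^\infty H^1}+1$ and $L=\|U_1\|_{L^{2(N+2)/(N-2)}_{t,x}}$ (both finite by $(SC)(U_1)$): after checking \eqref{eq5.1}--\eqref{eq5.4} for $n$ large, \eqref{eq5.5} gives $\|z_n-\tilde u_n\|_{L^\infty H^1\cap L^{2(N+2)/(N-2)}W^{1,2N(N+2)/(N^2+4)}}\le C\varepsilon$, hence $\|z_n\|_{L^{2(N+2)/(N-2)}W^{1,2N(N+2)/(N^2+4)}(\mathbb R)}\le\|U_1\|_{L^{2(N+2)/(N-2)}W^{1,2N(N+2)/(N^2+4)}(\mathbb R)}+C\varepsilon<\infty$, i.e.\ $(SC)(z_{0,n})$ holds for $n$ large.

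The main obstacle, absent in the pure power case of \cite{KCEMF2006}, is that \eqref{eq1.1} is \emph{not} invariant under $u\mapsto\lambda^{-(N-2)/2}u(\cdot/\lambda,\cdot/\lambda^2)$, so the function $\tilde u_n$ built from $U_1$ in case \eqref{eq8.2} solves \eqref{eq1.1} only approximately; the hard part will be to show that the forcing term $e_n$ is small in $L^\infty_IH^1\cap L^2_IL^{2N/(N+2)}$, which uses $(F_1)$--$(F_5)$ and the fact that on the concentration scale $\lambda_{1,n}$ the subcritical part of $f$ becomes negligible (when $\lambda_{1,n}\asymp1$ this is immediate, while the scales $\lambda_{1,n}\to\infty$ are ruled out by $\|z_{0,n}\|_{L^2}\le\rho$ together with \eqref{eq5.14}). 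The analogous delicate point in case \eqref{eq8.1} is the passage from the $L^{2(N+2)/(N-2)}_{t,x}$ bound on $z_n$ to the $W^{1,2N(N+2)/(N^2+4)}$ bound, together with the almost-additivity of the space-time norms of the mutually orthogonal pieces $U_n^j$.
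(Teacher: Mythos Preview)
Your outline follows the paper's profile--decomposition strategy, but the two cases are handled differently from the paper, and in one of them there is a genuine gap.

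\textbf{Case \eqref{eq8.1}.} You invoke Lemma~\ref{L7.3} to obtain the nonlinear profile expansion \eqref{eq7.8} and then read off a global $L^{\frac{2(N+2)}{N-2}}_{t,x}$ bound on $z_n$. This is not legitimate as stated: Lemma~\ref{L7.3} (and Proposition~\ref{P7.1}) are proved under the small--data hypothesis $\limsup_n\|\varphi_n\|_{H^1}<\rho$, which forces every profile to be global by Lemma~\ref{L3.2}; here one only has $\int|\nabla z_{0,n}|^2<\int|\nabla W|^2$ and $E(z_{0,n})\to E_C$, so the hypothesis fails. The paper does \emph{not} quote Lemma~\ref{L7.3}; after your Step~2 (each $U_j$ satisfies $(SC)$), it instead (i) proves the tail bound $\|U_j\|^{\frac{2(N+2)}{N-2}}\le C\|V_{0,j}\|_{H^1}^{\frac{2(N+2)}{N-2}}$ for $j\ge j_0$, (ii) sets $\widetilde u=H_{n,\varepsilon_0}:=\sum_{j\le J(\varepsilon_0)}\lambda_{j,n}^{-\frac{N-2}{2}}U_j\big(\tfrac{\cdot-x_{j,n}}{\lambda_{j,n}},\tfrac{\cdot-t_{j,n}}{\lambda_{j,n}^2}\big)$ and shows $\|H_{n,\varepsilon_0}\|_{L^{\frac{2(N+2)}{N-2}}_{t,x}}\le C_0$ uniformly in $\varepsilon_0$, (iii) controls the interaction error $R_{n,\varepsilon_0}$ in the dual Strichartz norm, and (iv) applies Proposition~\ref{P5.1} with $\widetilde u=H_{n,\varepsilon_0}$, $e=R_{n,\varepsilon_0}$, $u_0=z_{0,n}$. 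In particular the $W^{1,\frac{2N(N+2)}{N^2+4}}$ bound on $z_n$ comes directly from \eqref{eq5.5}, not from the ``continuity upgrade from Section~3'' that you propose.

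\textbf{Case \eqref{eq8.2}.} Here you take a genuinely different, and more cautious, route. The paper simply rescales the data, setting $v_{0,n}=\lambda_{1,n}^{\frac{N-2}{2}}z_{0,n}(\lambda_{1,n}(\cdot+x_{1,n}))$, observes $v_{0,n}=U_1(s_n)+\widetilde{\widetilde w}_n$ with $\int|\nabla\widetilde{\widetilde w}_n|^2\to0$, and applies Proposition~\ref{P5.1} with $\widetilde u=U_1$ and $e\equiv0$; no forcing is introduced. Your choice to keep $z_{0,n}$ unrescaled, build $\widetilde u_n$ from a rescaled $U_1$, and bound the resulting forcing $e_n$ is the honest thing to do since \eqref{eq1.1} is not invariant under the $\dot H^1$ scaling for general $f$; but the paper treats the rescaling exactly as in the pure power case and does not address this at all. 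If you pursue your route, note that your discussion only excludes $\lambda_{1,n}\to\infty$; you still need to show $e_n$ is small when $\lambda_{1,n}\to0$, and your one--line justification (``the subcritical part becomes negligible'') needs to be turned into an actual estimate in $L^\infty_IH^1\cap L^2_IL^{\frac{2N}{N+2}}$.
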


Let us first assume the validity of Lemma \ref{L5.5} and use it (together with Lemma \ref{L5.4}) to establish Lemmas \ref{L5.2} and \ref{L5.3}.
\begin{proof}[\bf Proof of Lemma \ref{L5.2}]
According to the definition of $E_C$, and the assumption that $E_C<E(W)$, we can find $u_{0, n} \in H^1$, with
\begin{equation*}
  \int_{\mathbb{R}^N}\left|\nabla u_{0, n}\right|^2dx<\int_{\mathbb{R}^N}|\nabla W|^2dx,\ E\left(u_{0, n}\right) \rightarrow E_C
\end{equation*}
and such that if $u_n$ is the solution of \eqref{eq1.1} with data at $t=0$, $u_{0, n}$ and maximal interval of existence $I_n=\left(-T_{-}\left(u_{0, n}\right), T_{+}\left(u_{0, n}\right)\right)$, then $\left\|e^{i t \Delta} u_{0, n}\right\|_{L_{(-\infty, +\infty)}^{\frac{2(N+2)}{N-2}}W^{1,{\frac{2N(N+2)}{N^2+4}}}} \geq \rho>0$, where $\rho$ is as in Lemma \ref{L3.1} and $\left\|u_n\right\|_{L_{I_n}^{\frac{2(N+2)}{N-2}}W^{1,{\frac{2N(N+2)}{N^2+4}}}}=+\infty$(Here we are also using Lemma \ref{L2.1} and Lemma \ref{L3.1}). Note that, since $E_C<E(W)$, there exists $\delta_0>0$ such that
\begin{equation*}
  E\left(u_{0, n}\right) \leq\left(1-\delta_0\right) E(W),\ \forall n.
\end{equation*}
Because of Theorem \ref{t5.1}, we can find $\bar{\delta}$ so that
\begin{equation*}
   \int_{\mathbb{R}^N}\left|\nabla u_n(t)\right|^2dx \leq(1-\bar{\delta}) \int_{\mathbb{R}^N}|\nabla W|^2dx \ \text{for all}\ t \in I_n, \ \forall n.
\end{equation*}
Apply now Lemma \ref{L5.4} for $\varepsilon_0>0$ and Lemma \ref{L5.5}. We then have, for $J=J\left(\varepsilon_0\right)$, that
\begin{equation}\label{eq5.18}
  u_{0, n}   =\sum_{j=1}^J \frac{1}{\lambda_{j, n}^{\frac{N-2}{2}}} V_j^l\left(\frac{x-x_{j, n}}{\lambda_{j, n}}, \frac{-t_{j, n}}{\lambda_{j, n}^2}\right)+w_n,
\end{equation}
\begin{equation}\label{eq5.19}
  \int_{\mathbb{R}^N}\left|\nabla u_{0, n}\right|^2dx   =\sum_{j=1}^J \int_{\mathbb{R}^N}\left|\nabla V_{0, j}\right|^2dx+\int_{\mathbb{R}^N}\left|\nabla w_n\right|^2dx+o(1),
\end{equation}
\begin{equation}\label{eq5.20}
  E\left(u_{0, n}\right)  =\sum_{j=1}^J E\left(V_j^l\left(\frac{-t_{j, n}}{\lambda_{j, n}^2}\right)\right)+E\left(w_n\right)+o(1) .
\end{equation}
Note that because of \eqref{eq5.19} we have, for all $n$ large, that
\begin{equation*}
  \int_{\mathbb{R}^N}\left|\nabla w_n\right|^2dx \leq (1-\frac{\bar{\delta}}{2}) \int_{\mathbb{R}^N}|\nabla W|^2dx \ \text{and}\  \int_{\mathbb{R}^N}\left|\nabla V_{0, j}\right|^2dx \leq(1-\frac{\bar{\delta}}{2}) \int_{\mathbb{R}^N}|\nabla W|^2dx.
\end{equation*}
From Corollary \ref{c5.1} it now follows that $E (V_j^l (-\frac{t_{j, n}}{ \lambda_{j n}^2} ) ) \geq 0$ and $E\left(w_n\right) \geq 0$. From this and \eqref{eq5.20} it follows that
\begin{equation*}
  E(V_1^l(-\frac{t_{1, n}}{\lambda_{1, n}^2})) \leq E\left(u_{0, n}\right)+o(1)
\end{equation*}
and hence $\varliminf\limits_{n \rightarrow \infty} E(V_1^l(-\frac{t_{1, n}}{\lambda_{1, n}^2})) \leq E_C$. If the left-hand side is strictly less than $E_C$, Lemma \ref{L5.5} gives us a contradiction with the choice of $u_{0, n}$, for $n$ large (after passing to a subsequence). Hence, the left-hand side must equal $E_C$.

Let then $U_1$ be the non-linear profile associated to $\left(V_1^l,\left\{s_n\right\}\right)$, with $s_n=-\frac{t_{1, n}}{\lambda_{1, n}^2}$(after passing to a subsequence). We first note that we must have $J=1$. This is because \eqref{eq5.20} and $E\left(u_{0, n}\right) \rightarrow E_C, E\left(V_1^l\left(-s_n\right)\right) \rightarrow E_C$ now imply that
\begin{equation*}
  E\left(w_n\right) \rightarrow 0\ \text{and}\ E(V_j^l(-\frac{t_{j, n}}{\lambda_{j, n}^2})) \rightarrow 0, j=2, \ldots, J.
\end{equation*}
Using \eqref{eq5.9} and the argument in the proof of Corollary \ref{c5.2}, we have
\begin{equation*}
  \sum_{j=2}^J \int_{\mathbb{R}^N}|\nabla V_j^l(-\frac{t_{j, n}}{\lambda_{j, n}^2})|^2dx+\int_{\mathbb{R}^N}\left|\nabla w_n\right|^2dx \rightarrow 0.
\end{equation*}
We then have, since $\int_{\mathbb{R}^N}|\nabla V_j^l(-\frac{t_{j, n}}{\lambda_{j, n}^2})|^2 dx=\int_{\mathbb{R}^N}\left|\nabla V_{0, j}\right|^2dx$ that $V_{0, j}=0, j=2, \ldots, J$ and $\int_{\mathbb{R}^N}\left|\nabla w_n\right|^2dx \rightarrow 0$. Hence \eqref{eq5.18} becomes $u_{0, n}=\frac{1}{\lambda_{1, n}^{\frac{N-2}{2}}} V_1^l\left(\frac{x-x_{1, n}}{\lambda_{1, n}}, s_n\right)+w_n$. Let $v_{0, n}=\lambda_{1, n}^{\frac{N-2}{2}} u_{0, n}\left(\lambda_{1, n}\left(x+x_{1, n}\right)\right)$ and note that scaling gives us that $v_{0, n}$ verifies the same hypothesis as $u_{0, n}$. Moreover, $\widetilde{w}_n=\lambda_{1, n}^{\frac{N-2}{2}} w_n\left(\lambda_{1, n}(x+x_{1, n})\right)$ still verifies $\int_{\mathbb{R}^N}\left|\nabla \widetilde{w}_n\right|^2dx \rightarrow 0$. Thus
$$
v_{0, n}=V_1^l(s_n)+\widetilde{w}_n, \ \int_{\mathbb{R}^N}\left|\nabla \widetilde{w}_n\right|^2dx \rightarrow 0 .
$$

Let us return to $U_1$, the non-linear profile associated to $\left(V_{0,1},\left\{s_n\right\}\right)$ and let
\begin{equation*}
  I_1=(T_{-}(U_1), T_{+}(U_1))
\end{equation*}
be its maximal interval of existence. Note that, by definition of non-linear profile, we have
\begin{equation*}
  \int_{\mathbb{R}^N}\left|\nabla U_1\left(s_n\right)\right|^2dx=\int_{\mathbb{R}^N}\left|\nabla V_1^l\left(s_n\right)\right|^2dx+o(1)\ \text{and}\ E\left(U_1\left(s_n\right)\right)=E\left(V_1^l\left(s_n\right)\right)+o(1)
\end{equation*}
Note that in this case $E(V_1^l(s_n))=E_C+o(1)$ and
\begin{equation*}
  \int_{\mathbb{R}^N}\left|\nabla V_1^l\left(s_n\right)\right|^2dx= \int_{\mathbb{R}^N}\left|\nabla V_{0,1}\right|^2dx=\int_{\mathbb{R}^N}\left|\nabla u_{0, n}\right|^2dx+o(1)<\int_{\mathbb{R}^N}|\nabla W|^2dx
\end{equation*}
for $n$ large by Theorem \ref{t5.1}. Let's fix $\bar{s} \in I_1$. Then $E\left(U_1\left(s_n\right)\right)=E\left(U_1(\bar{s})\right)$, so that
$$
E\left(U_1(\bar{s})\right)=E_C .
$$
Moreover, $\int_{\mathbb{R}^N}\left|\nabla U_1\left(s_n\right)\right|^2dx<\int_{\mathbb{R}^N}|\nabla W|^2dx$ for $n$ large and hence by \eqref{eq5.11}
\begin{equation*}
  \int_{\mathbb{R}^N}\left|\nabla U_1(\bar{s})\right|^2dx<\int_{\mathbb{R}^N}|\nabla W|^2dx.
\end{equation*}
If $\left\|U_1\right\|_{L_{I_1}^{\frac{2(N+2)}{N-2}}W^{1,{\frac{2N(N+2)}{N^2+4}}}}<+\infty$, Remark  \ref{r1.4} gives us that $I_1=(-\infty,+\infty)$ and we then obtain a contradiction from Lemma \ref{L5.5}. Thus,
$$
\left\|U_1\right\|_{L_{I_1}^{\frac{2(N+2)}{N-2}}W^{1,{\frac{2N(N+2)}{N^2+4}}}}=+\infty
$$
and we then set $u_C=U_1$$($after a translation in time to make $\bar{s}=0)$.
\end{proof}

\begin{proof}[\bf Proof of Lemma \ref{L5.3}]
We argue by contradiction. For brevity of notation, let us set $u(x, t)=u_C(x, t)$. If not, there exists $\eta_0>0$ and a sequence $\left\{t_n\right\}_{n=1}^{\infty}$, $t_n \geq 0$ such that, for all $\lambda_0 \in \mathbb{R}^{+}, x_0 \in \mathbb{R}^N$, we have
\begin{equation}\label{eq5.21}
  \left\|\frac{1}{\lambda_0^{\frac{N-2}{ 2}}} u\left(\frac{x-x_0}{\lambda_0}, t_n\right)-u\left(x, t_{n^{\prime}}\right)\right\|_{H^1} \geq \eta_0, \quad \text { for } n \neq n^{\prime} .
\end{equation}
Note that $($after passing to a subsequence, so that $t_n \rightarrow \bar{t} \in\left[0, T_{+}\left(u_0\right)\right])$, we must have $\bar{t}=T_{+}\left(u_0\right)$, in view of the continuity of the flow in $H^1$, as guaranteed by Lemma \ref{L3.1}. Note that, in view of Lemma \ref{L3.1} we must also have $\left\|e^{i t \Delta} u\left(t_n\right)\right\|_{L_{(0, +\infty)}^{\frac{2(N+2)}{N-2}}W^{1,{\frac{2N(N+2)}{N^2+4}}}} \geq \rho$.

\textbf{Step 1.} Let us apply Lemma \ref{L5.4} to $v_{0, n}=u\left(t_n\right)$ with $\varepsilon_0>0$. We will show that $J=1$. Indeed, if $\varliminf\limits_{n \rightarrow \infty} E(V_1^l(-\frac{t_{1, n}}{ \lambda_{1, n}^2}))<E_C$, then by Theorem \ref{t5.1}, we have
\begin{equation*}
  \int_{\mathbb{R}^N}|\nabla u(t)|^2dx \leq (1-\delta_1) \int_{\mathbb{R}^N}|\nabla W|^2dx \ \text{for all}\  t \in I_{+}
\end{equation*}
and $E(u(t))=E\left(u_0\right)=$ $E_C<E(W)$, by Lemma \ref{L5.5} we obtain that $(SC)(u)$ holds. So $\left\|u\right\|_{L_{I_{+}}^{\frac{2(N+2)}{N-2}}W^{1,{\frac{2N(N+2)}{N^2+4}}}}<+\infty$, which contradicts the hypothesis. Therefore, it follows that $\varliminf\limits_{n \rightarrow \infty} E(V_1^l(-\frac{t_{1, n}}{\lambda_{1, n}^2}))=E_C$. Similar to the proof of Lemma \ref{L5.2}, we get \begin{equation*}
  J=1,\ \int_{\mathbb{R}^N}\left|\nabla w_n\right|^2dx \rightarrow 0.
\end{equation*}
Thus, we have
\begin{equation}\label{eq5.22}
  u(t_n)=\frac{1}{\lambda_{1, n}^{\frac{N-2}{ 2}}} V_1^l\left(\frac{x-x_{1, n}}{\lambda_{1, n}}, \frac{-t_{1, n}}{\lambda_{1, n}^2}\right)+w_n, \quad \int_{\mathbb{R}^N}\left|\nabla w_n\right|^2dx \rightarrow 0 .
\end{equation}

\textbf{Step 2.} we prove that $s_n=\frac{-t_{1, n}}{\lambda_{1, n}^2}$ must be bounded. In fact, note that
$$
e^{i t \Delta} u\left(t_n\right)=\lambda_{1, n}^{-\frac{N-2}{2}} V_1^l\left(\frac{x-x_{1, n}}{\lambda_{1, n}}, \frac{t-t_{1, n}}{\left(\lambda_{1, n}\right)^2}\right)+e^{i t \Delta} w_n .
$$

On the other hand, assume $\frac{t_{1, n}}{\lambda_{1, n}^2 }\leq-C_0$, where $C_0$ is a large positive constant. Then, since
\begin{equation*}
  \left\|e^{i t \Delta} w_n\right\|_{L_{(0, +\infty)}^{\frac{2(N+2)}{N-2}}W^{1,{\frac{2N(N+2)}{N^2+4}}}}<\frac{\rho}{ 2} \ \text{for}\ n \ \text{large enough}
\end{equation*}
and
$$
\left\|\lambda_{1, n}^{-\frac{N-2}{ 2}} V_1^l\left(\frac{x-x_{1, n}}{\lambda_{1, n}}, \frac{t-t_{1, n}}{\left(\lambda_{1, n}\right)^2}\right)\right\|_{L_{(0, +\infty)}^{\frac {2(N+2)}{N-2}}W^{1,{\frac{2N(N+2)}{N^2+4}}}} \leq\left\|V_1^l(y, s)\right\|_{L_{(C_0, +\infty)}^{\frac{2(N+2)}{N-2}}W^{1,{\frac{2N(N+2)}{N^2+4}}}} \leq \frac{\rho}{2}
$$
for $C_0$ large, which contradicts  $\left\|e^{i t \Delta} u\left(t_n\right)\right\|_{L_{(0, +\infty)}^{\frac{2(N+2)}{N-2}}W^{1,{\frac{2N(N+2)}{N^2+4}}}} \geq \rho$.

On the other hand, assume that $\frac{t_{1, n}}{\lambda_{1, n}^2} \geq C_0$, for a large positive constant $C_0, n$ large, we have
\begin{equation*}
  \left\|\lambda_{1, n}^{-\frac{N-2}{2}} V_1^l\left(\frac{x-x_{1, n}}{\lambda_{1, n}}, \frac{t-t_{1, n}}{\left(\lambda_{1, n}\right)^2}\right)\right\|_{L_{(-\infty,0)}^{\frac{2(N+2)}{N-2}}W^{1,{\frac{2N(N+2)}{N^2+4}}}}   \leq\left\|V_1^l(y, s)\right\|_{L_{(-\infty,-C_0)}^{\frac{2(N+2)}{N-2}}W^{1,{\frac{2N(N+2)}{N^2+4}}}} \leq \frac{\rho}{2}
\end{equation*}
for $C_0$ large. Hence, $\left\|e^{i t \Delta} u\left(t_n\right)\right\|_{L_{(-\infty,0)}^{\frac{2(N+2)}{N-2}}W^{1,{\frac{2N(N+2)}{N^2+4}}}} \leq \rho$, for $n$ large. By Lemma \ref{L3.1}, we know that $\|u\|_{L_{(-\infty,t_n)}^{\frac{2(N+2)}{N-2}}W^{1,{\frac{2N(N+2)}{N^2+4}}}} \leq \rho$, which gives us a contradiction because of $t_n \rightarrow T_{+}\left(u_0\right)$. Thus $\left|\frac{t_{1, n}}{ \lambda_{1, n}^2}\right| \leq C_0$ and after passing to a subsequence,
$$
\frac{t_{1, n}}{\lambda_{1, n}^2} \rightarrow t_0 \in(-\infty,+\infty) .
$$

\textbf{Step 3.} By \eqref{eq5.21} and \eqref{eq5.22}, for $n \neq n^{\prime}$ large (independently of $\left.\lambda_0, x_0\right)$, it holds
\begin{equation*}
  \left\| \frac{1}{\lambda_0^{\frac{N-2}{2}}} \frac{1}{\lambda_{1, n}^{\frac{N-2}{ 2}}} V_1^l\left(\frac{\frac{x-x_0}{\lambda_0}-x_{1, n}}{\lambda_{1, n}},\frac{-t_{1, n}}{(\lambda_{1, n})^2}\right)-\frac{1}{\left(\lambda_{1, n^{\prime}}\right)^{\frac{N-2}{ 2}}} V_1^l\left(\frac{x-x_{1, n^{\prime}}}{\lambda_{1, n^{\prime}}},\frac{-t_{1, n^{\prime}}}{(\lambda_{1, n^{\prime}})^2}\right) \right\|_{H^1} \geq \eta_0 / 2
\end{equation*}
or
\begin{equation*}
  \left\|\left(\frac{\lambda_{1, n^{\prime}}}{\lambda_{1, n} \lambda_0}\right)^{\frac{N-2}{ 2}} V_1^l\left(\frac{y \lambda_{1, n^{\prime}}}{\lambda_0 \lambda_{1, n}}+\widetilde{x}_{n, n^{\prime}}-\widetilde{x}_0,-\frac{t_{1, n}}{\left(\lambda_{1, n}\right)^2}\right)
  -V_1^l\left(y,-\frac{t_{1, n^{\prime}}}{\lambda_{1, n^{\prime}}^2}\right) \right\|_{H^1} \geq \frac{\eta_0}{2},
\end{equation*}
where $\widetilde{x}_{n, n^{\prime}}$ is a suitable point in $\mathbb{R}^N$ and $\lambda_0, \widetilde{x}_0$ are arbitrary. But if we choose $\lambda_0=\frac{\lambda_{1, n^{\prime}}}{ \lambda_{1, n}}, \tilde{x}_0 =x_{n, n^{\prime}}$, then $-\frac{t_{1, n}}{\left(\lambda_{1, n}\right)^2} \rightarrow -t_0$ and $\frac{-t_{1, n^{\prime}}}{\left(\lambda_{1, n^{\prime}}\right)^2} \rightarrow-t_0$. So $  \left\|0\right\|_{H^1} \geq \frac{\eta_0}{2}$, which reaches a contradiction.
\end{proof}

Thus, to complete the proofs of Lemmas \ref{L5.2} and \ref{L5.3} we only need to provide the proof of Lemma \ref{L5.5}.

\begin{proof}[\bf Proof of Lemma \ref{L5.5}]
Let us assume first that \eqref{eq8.2} holds and set
\begin{equation*}
  A= \int_{\mathbb{R}^N}|\nabla W|^2dx, A^{\prime}=\int_{\mathbb{R}^N}|\nabla W|^2dx, M=\left\|U_1\right\|_{L_{(-\infty, +\infty)}^{\frac{2(N+2)}{N-2}}W^{1,{\frac{2N(N+2)}{N^2+4}}}}.
\end{equation*}
Arguing (for some $\varepsilon_0>0$ in Lemma \ref{L5.4}) as in the proof of Lemmas \ref{L5.2}, we see that
\begin{equation*}
  \varliminf\limits_{n \rightarrow \infty} E(V_1^l(-\frac{t_{1, n}}{ \lambda_{1, n}^2})) =E_C \text{ and } E_C<E(W),
\end{equation*}
which imply that $J=1$, $\int_{\mathbb{R}^N}\left|\nabla w_n\right|^2dx \rightarrow 0$. Moreover, if
 \begin{equation*}
v_{0, n} =\lambda_{1, n}^{\frac{N-2}{2}} z_{0, n}(\lambda_{1, n}(x+x_{1, n})),\ \widetilde{w}_n=\lambda_{1, n}^{\frac{N-2}{2}} w_n(\lambda_{1, n}(x+x_{1, n})),\ s_n=-\frac{t_{1, n}}{\lambda_{1, n}^2},
 \end{equation*}
we have $\int_{\mathbb{R}^N}\left|\nabla \widetilde{w}_n\right|^2dx \rightarrow 0$ and $v_{0, n}=V_1^l\left(s_n\right)+\widetilde{w}_n$, while
\begin{equation*}
  \left\|e^{i t \Delta} v_{0, n}\right\|_{L_{(-\infty, +\infty)}^{\frac{2(N+2)}{N-2}}W^{1,{\frac{2N(N+2)}{N^2+4}}}} \geq \delta,\ \int_{\mathbb{R}^N}\left|\nabla v_{0, n}\right|^2dx<\int_{\mathbb{R}^N}|\nabla W|^2dx,\ E\left(v_{0, n}\right) \rightarrow E_C.
\end{equation*}
By definition of non-linear profile, we know that
\begin{equation*}
  \int_{\mathbb{R}^N}|\nabla  V_1^l(s_n)- \nabla U_1(s_n)|^2dx=o(1).
\end{equation*}
We then have
$$
v_{0, n}=U_1(s_n)+\widetilde{\widetilde{w}}_n, \int_{\mathbb{R}^N} |\nabla \widetilde{\widetilde{w}}_n |^2dx \rightarrow 0 .
$$
Moreover, as in the proof of Lemma \ref{L5.2}, $E(U_1(0))=E_C$ and $\int_{\mathbb{R}^N}\left|\nabla U_1(t)\right|^2dx<\int_{\mathbb{R}^N}|\nabla W|^2dx$ for all $t$. We now apply Proposition \ref{P5.1}, with $\varepsilon_0<\varepsilon_0\left(M, A, A^{\prime}, N\right)$ and $n$ large, with $\tilde{u}=U_1, e \equiv 0, t_0=0, u_0=v_{0, n}$. This case now follows.

Next, assume that \eqref{eq8.1} holds, the proof is divided into five steps.

\textbf{Step 1 } We prove that for $j \geq 2$, we also have $\varliminf\limits_{n \rightarrow \infty} E(V_j^l(-\frac{t_{j, n}}{\lambda_{j, n}^2}))<E_C$. In fact, up to a subsequence, assume $\lim\limits_{n \rightarrow \infty} E(V_1^l(-\frac{t_{1, n}}{\lambda_{1, n}}))<E_C$. Due to \eqref{eq5.16}, it holds
$$
\int_{\mathbb{R}^N} |\nabla z_{0, n} |^2dx \geq \sum\limits_{j=1}^J \int_{\mathbb{R}^N} |\nabla V_{0, j} |^2dx+o(1)
$$
and since $E_C<E(W)$, for $n$ large we have $E(z_{0, n}) \leq(1-\delta_0) E(W)$, by Lemma \ref{L5.1},
\begin{equation*}
  \int_{\mathbb{R}^N}\left|\nabla z_{0, n}\right|^2dx \leq(1- \delta_1) \int_{\mathbb{R}^N}|\nabla W|^2dx \text{ and } \int_{\mathbb{R}^N}|\nabla V_{0, j}|^2dx \leq (1-\delta_1) \int_{\mathbb{R}^N}|\nabla W|^2dx.
\end{equation*}
Similarly, $\int_{\mathbb{R}^N}|\nabla w_n|^2dx \leq(1-\delta_1) \int_{\mathbb{R}^N}|\nabla W|^2dx$. By Corollary \ref{c5.1}, we have
\begin{equation*}
  E(V_j^l(-\frac{t_{j, n}}{\lambda_{j, n}^2})) \geq 0,\ E(w_n) \geq 0.
\end{equation*}
Moreover, using \eqref{eq5.14} and the proof of Corollary \ref{c5.2}, we have
\begin{equation*}
  E(V_1^l(-\frac{t_{1, n}}{\lambda_{1, n}^2})) \geq C \int_{\mathbb{R}^N}\left|\nabla V_{0,1}\right|^2dx \geq c \alpha_0=\overline{\alpha_0}>0 \text{ for } n  \text{ large}.
\end{equation*}
By \eqref{eq5.17}, it holds
$$
E(z_{0, n}) \geq \overline{\alpha_0}+\sum_{j=2}^J E(V_j^l(-\frac{t_{j, n}}{ \lambda_{j, n}^2}))+o(1) \text{ for } n  \text{ large},
$$
so the claim follows from $E\left(z_{0, n}\right) \rightarrow E_C$.

\textbf{Step 2 } We show that (after passing to a subsequence so that, for each $j$, $\lim\limits_n E(V_j^l(-\frac{t_{j, n}}{\lambda_{j, n}^2}))$ exists and $\lim\limits_n(-\frac{t_{j, n}}{\lambda_{j, n}^2})=\overline{s_j} \in[-\infty,+\infty]$ exists) if $U_j$ is the non-linear profile associated to $(V_j^l,\left\{-\frac{t_{j, n}}{\lambda_{j, n}^2}\right\})$, then $U_j$ satisfies (SC). Indeed, according to the definition of non-linear profile and Step 1, it follows that $E(U_j)<E_C$ because of $\varliminf\limits_{n \rightarrow \infty} E(V_j^l(-\frac{t_{j, n}}{\lambda_{j, n}^2}))<E_C$. Moreover, since
\begin{equation*}
  \int_{\mathbb{R}^N}|\nabla V_j^l(-\frac{t_{j, n}}{\lambda_{j, n}^2})|^2dx \leq (1-\delta_1)  \int_{\mathbb{R}^N}|\nabla W|^2dx,
\end{equation*}
the definition of non-linear profile and Theorem \ref{t5.1}, if $\bar{t} \in I_j($the maximal interval for $U_j)$, we have $\int_{\mathbb{R}^N}|\nabla U_j(\bar{t})|^2dx<\int_{\mathbb{R}^N}|\nabla W|^2dx$. By the definition of $E_C$, our claim follows. Note that the argument in the proof of Proposition \ref{P5.1} also gives that $\left\| U_j\right\|_{L_{(-\infty,+\infty)}^\frac{2(N+2)}{N-2}W^{1,{\frac{2N(N+2)}{N^2+4}}}}<+\infty$.

\textbf{Step 3 } We claim that there exists $j_0$ so that, for $j \geq j_0$ we have
 \begin{equation}\label{eq8.8}
\left\|U_j\right\|_{L_{(-\infty,+\infty)}^\frac{2(N+2)}{N-2}W^{1,{\frac{2N(N+2)}{N^2+4}}}}^{\frac{2(N+2)} {N-2}} \leq C\| V_{0, j}\|_{H^1}^{\frac{2(N+2)}{ N-2}} .
 \end{equation}
In fact, from \eqref{eq5.16}, for fixed $J$ we see that (choosing $n$ large)
 \begin{equation*}
   \sum_{j=1}^J \int_{\mathbb{R}^N}\left|\nabla V_{0, j}\right|^2dx \leq \int_{\mathbb{R}^N}\left|\nabla z_{0, n}\right|^2dx+o(1) \leq 2 \int_{\mathbb{R}^N}|\nabla W|^2dx.
 \end{equation*}
Thus, for $j \geq j_0$, we have
\begin{equation*}
  \int_{\mathbb{R}^N}\left|\nabla V_{0, j}\right|^2dx \leq \widetilde{\delta},
\end{equation*}
where $\tilde{\delta}$ is so small that $\left\|e^{i t \Delta} V_{0, j}\right\|_{L_{(-\infty,+\infty)}^\frac{2(N+2)}{N-2}W^{1,{\frac{2N(N+2)}{N^2+4}}}} \leq \rho$, with $\rho$ as in Lemma \ref{L3.1}. From the definition of non-linear profile, it then follows that $\left\|U_j\right\|_{L_{(-\infty,+\infty)}^\frac{2(N+2)}{N-2}W^{1,{\frac{2N(N+2)}{N^2+4}}}} \leq 2 \rho$, and using the integral equation
\begin{equation*}
  u(t)=e^{i t \Delta} u_0+i\int_0^t e^{i\left(t-t^{\prime}\right) \Delta} f(u) d t^{\prime}.
\end{equation*}
So $\left\|U_j(0)\right\|_{H^1} \leq C\left\|V_{0, j}\right\|_{H^1}$ and $\left\| U_j\right\|_{L_{(-\infty,+\infty)}^\frac{2(N+2)}{N-2}W^{1,{\frac{2N(N+2)}{N^2+4}}}} \leq C\left\|V_{0, j}\right\|_{H^1}$, which implies \eqref{eq8.8}.

\textbf{Step 4 } For $\varepsilon_0>0$, to be chosen, define now
$$
H_{n, \varepsilon_0}=\sum\limits_{j=1}^{J\left(\varepsilon_0\right)} \frac{1}{\lambda_{j, n}^{\frac{N-2}{ 2}}} U_j\left(\frac{x-x_{j, n}}{\lambda_{j, n}}, \frac{t-t_{j, n}}{\lambda_{j, n}^2}\right),
$$
then it follows that
\begin{equation}\label{eq8.9}
  \left\|H_{n, \varepsilon_0}\right\|_{L_{(-\infty,+\infty)}^\frac{2(N+2)}{N-2}L^{\frac{2N(N+2)}{N-2}}} \leq C_0,
\end{equation}
uniformly in $\varepsilon_0$, for $n \geq n(\varepsilon_0)$. In fact,
\begin{eqnarray*}
&&\left\|H_{n, \varepsilon_0}\right\|_{L_{(-\infty,+\infty)}^\frac{2(N+2)}{N-2}L^{\frac{2N(N+2)}{N-2}}}\\
&= & \iint\left[\sum_{j=1}^{J\left(\varepsilon_0\right)} \frac{1}{\lambda_{j, n}^{\frac{N-2}{2}}} U_j\left(\frac{x-x_{j, n}}{\lambda_{j, n}}, \frac{t-t_{j, n}}{\lambda_{j, n}^2}\right)\right]^{\frac{2(N+2)}{N-2}} \\
&\leq & C_{J\left(\varepsilon_0\right)} \sum_{j^{\prime} \neq j} \iint\left|\frac{1}{\lambda_{j, n}^{\frac{N-2}{2}}} U_j\left(\frac{x-x_{j, n}}{\lambda_{j, n}}, \frac{t-t_{j, n}}{\lambda_{j, n}^2}\right)\right|\cdot\left|\frac{1}{\lambda_{j^{\prime}, n}^{\frac{N-2}{2}}} U_{j^{\prime}}\left(\frac{x-x_{j^{\prime}, n}}{\lambda_{j^{\prime}, n}}, \frac{t-t_{j^{\prime}, n}}{\lambda_{j^{\prime}, n}^2}\right)\right|^{\frac{N+6}{N-2}}\\
&& +\sum_{j=1}^{J\left(\varepsilon_0\right)} \iint\left|\frac{1}{\lambda_{j, n}^{\frac{N-2}{2}}} U_j\left(\frac{x-x_{j, n}}{\lambda_{j, n}}, \frac{t-t_{j, n}}{\lambda_{j, n}^2}\right)\right|^{\frac{2(N+2)}{N-2}} \\
&=&\mathrm{I}+\mathrm{II} .
\end{eqnarray*}
By the orthogonality of $(\lambda_{j, n} ; x_{j, n} ; t_{j, n})$, we know that $\mathrm{II} \rightarrow 0$ for $n$ large(see Keraani \cite{KSK2001}). Hence, for $n$ large we have $\mathrm{II} \leq \mathrm{I}$. Since \eqref{eq5.16}, it follows that
\begin{eqnarray*}
\mathrm{I} & \leq& \sum_{j=1}^{j_0}\left\|U_j\right\|_{L_{(-\infty,+\infty)}^\frac{2(N+2)}{N-2}L^{\frac{2N(N+2)}{N-2}}}^{\frac{2(N+2)}{ N-2}}+\sum_{j=j_0}^{J\left(\varepsilon_0\right)}\left\|U_j\right\|_{L_{(-\infty,+\infty)}^\frac{2(N+2)}{N-2}L^{\frac{2N(N+2)}{N-2}}} ^{\frac{2(N+2)}{ N-2}}\\
& \leq& \sum_{j=1}^{j_0}\left\|U_j\right\|_{L_{(-\infty,+\infty)}^\frac{2(N+2)}{N-2}L^{\frac{2N(N+2)}{N-2}}}^{\frac{2(N+2)}{ N-2}}+C \sum_{j=j_0}^{J\left(\varepsilon_0\right)}\| V_{0, j}\|_{H^1}^{\frac{2(N+2)}{ N-2}}  \\ &\leq& \frac{C_0}{2},
\end{eqnarray*}
where $j_0$ is defined as in \eqref{eq8.8}. For $\varepsilon_0>0$, to be chosen, define
\begin{equation*}
  R_{n, \varepsilon_0}=\left|H_{n, \varepsilon_0}\right|^{\frac{4}{N-2}} H_{n, \varepsilon_0}-  \sum_{j=1}^{J\left(\varepsilon_0\right)}\left|\frac{1}{\lambda_{j, n}^{\frac{N-2}{2}}} U_j\left(\frac{x-x_{j, n}}{\lambda_{j, n}}, \frac{t-t_{j, n}}{\lambda_{j, n}^2}\right)\right|^{\frac{4}{N-2}}  \frac{1}{\lambda_{j, n}^{\frac{N-2}{2}}} U_j\left(\frac{x-x_{j, n}}{\lambda_{j, n}}, \frac{t-t_{j, n}}{\lambda_{j, n}^2}\right).
\end{equation*}
using the arguments of Keraani  \cite{KSK2001}, we get
$$
\text { For } n=n\left(\varepsilon_0\right) \text { large, }\left\|\nabla R_{n, \varepsilon_0}\right\|_{L_t^2 L_x^{\frac{2 N}{N+2}}} \rightarrow 0 \quad \text { as } n \rightarrow \infty \text {. }
$$

\textbf{Step 5 }  Finally, we apply Proposition \ref{P5.1} to obtain our purpose. Let
\begin{equation*}
  \widetilde{u}=H_{n, \varepsilon_0},\ e=R_{n, \varepsilon_0},
\end{equation*}
where $\varepsilon_0$ is still to be determined. Recall that
\begin{equation*}
  z_{0, n}=\sum_{j=1}^{J\left(\varepsilon_0\right)} \frac{1}{\lambda_{j, n}^{\frac{N-2}{2}}} V_j^l\left(\frac{x-x_{j, n}}{\lambda_{j, n}}, \frac{-t_{j, n}}{\lambda_{j, n}^2}\right) +w_n,
\end{equation*}
where $\left\|e^{i t \Delta} w_n\right\|_{L_{(-\infty,+\infty)}^\frac{2(N+2)}{N-2}L^{\frac{2N(N+2)}{N-2}}} \leq \varepsilon_0$. By the definition of non-linear profile, we now have
$$
z_{0, n}(x)=H_{n, \varepsilon_0}(x, 0)+\widetilde{w}_n(x),
$$
where, for $n$ large $\left\|e^{i t \Delta} \widetilde{w}_n\right\|_{L_{(-\infty,+\infty)}^\frac{2(N+2)}{N-2}L^{\frac{2N(N+2)}{N-2}}} \leq 2 \varepsilon_0$. Moreover, according to the orthogonality of $\left(\lambda_{j, n} ; x_{j, n} ; t_{j, n}\right)$ and Corollary \ref{c5.2}, for $n=n\left(\varepsilon_0\right)$ large, it holds
\begin{equation*}
  \int_{\mathbb{R}^N}\left|\nabla H_{n, \varepsilon_0}(t)\right|^2dx \leq 2 \sum_{j=1}^{J\left(\varepsilon_0\right)} \int_{\mathbb{R}^N}|\nabla U_j(\frac{t-t_{j, n}}{\lambda_{j, n}^2})|^2dx \leq 4 C \sum_{j=1}^{J\left(\varepsilon_0\right)} \int_{\mathbb{R}^N}\left|\nabla V_{0, j}\right|^2dx
\end{equation*}
and
\begin{equation*}
  \sum_{j=1}^{J\left(\varepsilon_0\right)} \int_{\mathbb{R}^N}\left|\nabla V_{0, j}\right|^2dx \leq \int_{\mathbb{R}^N}\left|\nabla z_{0, n}\right|^2dx+\int_{\mathbb{R}^N}\left|\nabla z_{0, n}\right|^2dx+o(1) \leq 2 \int_{\mathbb{R}^N}|\nabla W|^2dx.
\end{equation*}
Let $M=C_0$ with $C_0$ as in \ref{eq8.8},
\begin{equation*}
  A=\widetilde{C} \int_{\mathbb{R}^N}|\nabla W|^2dx, A^{\prime}=A+\int_{\mathbb{R}^N}|\nabla W|^2dx, \varepsilon_0<\frac{\varepsilon_0(M, A, A^{\prime}, N)}{2},
\end{equation*}
where $\varepsilon_0(M, A, A^{\prime}, N)$ is defined as in Proposition \ref{P5.1}. Fix $\varepsilon_0$ and choose $n$ so large that $\|\nabla R_{n, \varepsilon_0}\|_{L_T^\infty H^1\cap L_T^2 L_x^{\frac{2 N}{N+2}}}<\varepsilon_0$ and so that all the above properties hold. Then Proposition \ref{P5.1} indicates that the conclusion is valid in the case when \eqref{eq8.1} holds.
\end{proof}
\begin{remark}\label{r8.1}
Assume that $\left\{z_{0, n}\right\}$ in Lemma \ref{L5.4} are all radial. Then $V_{0, j}, w_n$ can be chosen to be radial and we can choose $x_{j, n} \equiv 0$. This follows directly from Keraani's proof \cite{KSK2001}. If we then define $(\mathrm{SC})$ and $E_C$ by restricting only to radial functions, we obtain a $u_C$ as in Lemma \ref{L5.2} which is radial, and we can establish Lemma \ref{L5.3} with $x(t) \equiv 0$.
\end{remark}
\section{Rigidity theorem}
 In this section we will prove the following:
\begin{theorem}\label{t9.1}
Assume that $u_0 \in H^1$ is such that
$$
E(u_0)<E(W), \ \int_{\mathbb{R}^N}\left|\nabla u_0\right|^2dx<\int_{\mathbb{R}^N}|\nabla W|^2dx.
$$
Assume that $u$ be the solution of \eqref{eq1.1} and $\left.u\right|_{t=0}=u_0$ with maximal interval of existence $(-T_{-}(u_0), T_{+}(u_0))$. If there exists $\lambda(t)>0$, for $t \in\left[0, T_{+}\left(u_0\right)\right)$, with the property that
$$
K=\left\{v(x, t)=\frac{1}{\lambda(t)^{\frac{N-2}{2}}} u\left(\frac{x}{\lambda(t)}, t\right): t \in\left[0, T_{+}\left(u_0\right)\right)\right\}
$$
is such that $\overline{K}$ is compact in $H^1$. Then $T_{+}\left(u_0\right)=+\infty, u_0 \equiv 0$.
\end{theorem}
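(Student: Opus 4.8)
The plan is to follow the Kenig--Merle rigidity scheme: the energy trapping of Theorem \ref{t5.1} supplies a coercivity which, inserted into a localized virial identity of the type proved in Lemma \ref{L4.1}, produces a monotone and unbounded virial quantity, while the compactness of $\overline K$ is used both to prevent the profile from escaping and to absorb the cut--off errors. I argue by contradiction and suppose $u_0\not\equiv 0$. By Lemma \ref{L5.1}, Theorem \ref{t5.1} and Corollary \ref{c5.2}, for every $t\in[0,T_+(u_0))$ one has $\int_{\mathbb R^N}|\nabla u(t)|^2dx\le(1-\delta_1)\int_{\mathbb R^N}|\nabla W|^2dx$, the coercivity
$$
\int_{\mathbb R^N}\bigl(|\nabla u(t)|^2-f(u(t))\overline{u}(t)\bigr)dx\ \geq\ \bar\delta\int_{\mathbb R^N}|\nabla u(t)|^2dx ,
$$
and $E(u(t))=E(u_0)\simeq\int_{\mathbb R^N}|\nabla u(t)|^2dx>0$; by \eqref{eq1.2} also $\|u(t)\|_{L^2}=\|u_0\|_{L^2}>0$.

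First I would pin down the scale $\lambda(t)$. Since $\overline K$ is bounded in $H^1$ and $\|v(\cdot,t)\|_{L^2}^2=\lambda(t)^2\|u_0\|_{L^2}^2$, we get $\lambda(t)\le\lambda_1<\infty$. If $\lambda(t_n)\to0$ along a sequence, precompactness gives $v(\cdot,t_n)\to V$ in $H^1$ up to a subsequence, while $\|v(\cdot,t_n)\|_{L^2}=\lambda(t_n)\|u_0\|_{L^2}\to0$ forces $V\equiv0$; then $\|\nabla u(t_n)\|_{L^2}=\|\nabla v(\cdot,t_n)\|_{L^2}\to0$, so $E(u_0)=0$ by Corollary \ref{c5.2}, contradicting $u_0\not\equiv0$. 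Hence $0<\lambda_0\le\lambda(t)\le\lambda_1$ on $[0,T_+(u_0))$, and therefore $\{u(\cdot,t):t\in[0,T_+(u_0))\}$ is itself precompact in $H^1$; in particular, for every $\varepsilon>0$ there is $R_\varepsilon>0$ with $\sup_t\int_{|x|\ge R_\varepsilon}\bigl(|\nabla u(t)|^2+|u(t)|^2+|u(t)|^{2^*}\bigr)dx\le\varepsilon$.

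Now the two cases. If $T_+(u_0)<\infty$: taking $t_n\uparrow T_+(u_0)$, the precompactness just obtained yields $u(\cdot,t_n)\to u^*$ in $H^1$ along a subsequence, and solving \eqref{eq1.1} with data $u^*$ by Lemma \ref{L3.1} together with continuous dependence (Proposition \ref{P5.1}) extends the solution past $T_+(u_0)$, a contradiction; hence necessarily $T_+(u_0)=+\infty$. If $T_+(u_0)=+\infty$: with $\psi_R$ as in \eqref{eq4.2} set $z_R(t):=\int_{\mathbb R^N}\psi_R|u(t)|^2dx$, so $z_R'(t)=\mathcal M_{\psi_R}[u(t)]$ and, as in Lemma \ref{L4.1}, $|z_R'(t)|\le C(R)\|u_0\|_{L^2}\|\nabla u(t)\|_{L^2}\le C(R)$ uniformly in $t$. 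Running the computation of Lemma \ref{L4.1} as an identity and inserting the coercivity above gives, for $R$ large, $z_R''(t)=\frac{d}{dt}\mathcal M_{\psi_R}[u(t)]\ \geq\ 8\bar\delta\int_{\mathbb R^N}|\nabla u(t)|^2dx-\mathcal E_R(t)$, where $|\mathcal E_R(t)|\le C R^{-2}\|u_0\|_{L^2}^2+C\int_{|x|\ge R}\bigl(|\nabla u(t)|^2+|u(t)|^2+|u(t)|^{2^*}\bigr)dx$. Because $\int|\nabla u(t)|^2dx\simeq E(u_0)>0$ uniformly in $t$, the leading term is $\ge c_0>0$; choosing $\varepsilon$ and then $R$ large (in terms of $c_0$ and $\|u_0\|_{L^2}$) absorbs the error, so $z_R''(t)\ge c_0/2$ for all $t\ge0$. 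Integrating forces $z_R'(t)\to+\infty$, contradicting $|z_R'(t)|\le C(R)$. Both cases thus contradict the standing assumption, so $u_0\equiv0$, and then $T_+(u_0)=+\infty$.

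The step I expect to be the real obstacle is the virial lower bound for $z_R''$ with a \emph{general} nonlinearity $f$: one must use $f(z)=z\tilde f(|z|)$ and $f=\partial_{\bar z}F$ to recognize the nonlinear part of $\frac{d}{dt}\mathcal M_{\psi_R}[u]$ on $\{|x|<R\}$ as a fixed positive Pohozaev combination of $\int F(u)$ and $\int f(u)\overline u$, which energy trapping (\eqref{eq5.12} and \eqref{eq5.100}) then renders coercive with the correct sign --- precisely the mechanism that, for the power nonlinearity, collapses to $z_R''\approx 8\int(|\nabla u|^2-|u|^{2^*})dx$ --- while keeping the remaining pieces of size $O(R^{-2}\|u_0\|_{L^2}^2)$ plus a tail over $\{|x|\ge R\}$ controlled by compactness. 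The subsidiary point, the uniform lower bound $\lambda(t)\ge\lambda_0>0$, is here cheap because of mass conservation; in a purely $\dot H^1$ framework it would instead require a separate exclusion of a self-similar concentration regime.
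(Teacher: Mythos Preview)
Your argument is correct and in fact streamlines the paper's proof by exploiting mass conservation more aggressively. The paper follows the Kenig--Merle template verbatim: it first establishes a special case (Lemma~\ref{L9.1}) assuming $\lambda(t)\ge A_0>0$, handling finite-time blow-up by showing $\lambda(t)\to\infty$ and then running an $L^2$-decay argument via the localized virial identity (Lemma~\ref{L9.3}), and handling global existence by the virial lower bound; it then reduces the full Theorem~\ref{t9.1} to this lemma by a separate rescaling argument that treats the regime $\lambda(t_n)\to 0$ through a limiting profile $w_0$. You instead use $\|v(\cdot,t)\|_{L^2}=\lambda(t)\|u_0\|_{L^2}$ together with the $H^1$-compactness of $\overline K$ to pin $\lambda(t)\in[\lambda_0,\lambda_1]$ from the outset, which makes $\{u(\cdot,t)\}$ itself precompact in $H^1$; this both rules out $T_+<\infty$ directly via the local theory and eliminates the paper's separate handling of scale degeneration. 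The remaining virial step is then essentially the paper's Case~2 of Lemma~\ref{L9.1}: your coercivity input is exactly \eqref{eq5.12}--\eqref{eq5.100} combined with the identity of Lemma~\ref{L9.3}, and your error term $\mathcal E_R$ is the paper's $\int_{R\le|x|\le 2R}(\cdots)$ controlled by compactness. Your shortcut is available precisely because the paper works in $H^1$ rather than $\dot H^1$; as you yourself observe, in the pure $\dot H^1$ setting of \cite{KCEMF2006} one cannot argue this way and must exclude the self-similar regime as the paper does.
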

\begin{remark}\label{r9.1}
We conjecture that Theorem \ref{t9.1} remains true if $v(x, t)=$ $\frac{1}{\lambda(t)^{\frac{N-2}{2}}} u\left(\frac{x-x(t)}{\lambda(t)}, t\right)$, with $x(t) \in \mathbb{R}^N, t \in\left[0, T_{+}\left(u_0\right)\right)$. In other words, for ``energy subcritical'' initial data, compactness up to the invariances of the equation, for solutions, is only true for $u \equiv 0$.
\end{remark}
We start out with a special case of the strengthened form of Theorem \ref{t9.1}, i.e.,

\begin{lemma}\label{L9.1}
Assume that $u, v, \lambda(t), x(t)$ are as in Remark \ref{r9.1}, that $|x(t)| \leq C_0$ and that $\lambda(t) \geq A_0>0$. Then the conclusion of Theorem \ref{t9.1} holds. Moreover, if $T_{+}\left(u_0\right)<+\infty$, the hypothesis $|x(t)| \leq C_0$ is not needed.
\end{lemma}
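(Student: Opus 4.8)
The plan is to run the classical Kenig--Merle rigidity argument in the present general-nonlinearity setting, using the energy-trapping estimates of Theorem \ref{t5.1} (in particular \eqref{eq5.100} and \eqref{eq5.13}) in place of the exact power-type identities. The strategy splits according to whether $T_{+}(u_0)<+\infty$ or $T_{+}(u_0)=+\infty$.

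First I would treat the finite-time case $T_{+}(u_0)<+\infty$. Here the compactness of $\overline{K}$ forces $\lambda(t)\to+\infty$ as $t\uparrow T_{+}(u_0)$ (if $\lambda(t)$ stayed bounded along a sequence, the rescaled family would converge in $H^1$ and one could continue the solution past $T_{+}$, contradicting Remark \ref{r1.4}). Using this concentration, a local mass argument shows that for every $R>0$, $\int_{|x|\le R}|u(x,t)|^2\,dx\to 0$ as $t\uparrow T_{+}(u_0)$; combined with conservation of mass \eqref{eq1.2} and the fact that, by compactness, the mass cannot escape to spatial infinity (here one uses a truncation $\phi_R$ and controls $\frac{d}{dt}\int \phi_R|u|^2\,dx$ via Lemma \ref{L2.4}-type estimates), this yields $\|u_0\|_{L^2}=0$, hence $u_0\equiv 0$, contradicting $T_{+}(u_0)<\infty$. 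So in fact $T_{+}(u_0)=+\infty$ in this case as well, and the conclusion holds.

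Next the global case $T_{+}(u_0)=+\infty$, with $|x(t)|\le C_0$ and $\lambda(t)\ge A_0>0$. Here I would use the localized virial functional from Section 4: set $\mathcal{M}_{\psi_R}[u(t)]=2\,\mathrm{Im}\int \overline{u}\,\nabla\psi_R\cdot\nabla u\,dx$ with $\psi_R$ as in \eqref{eq4.2}. By Lemma \ref{L2.4} and the uniform bound on $\|u(t)\|_{H^1}$ coming from Corollary \ref{c5.2} (energy trapping keeps $\int|\nabla u(t)|^2$ bounded), one has $\sup_t|\mathcal{M}_{\psi_R}[u(t)]|\le C(R)$. On the other hand, differentiating and using the equation, $\frac{d}{dt}\mathcal{M}_{\psi_R}[u(t)]=8\int|\nabla u(t)|^2\,dx-4(N+1)\int F(u)\,dx+(\text{error terms controlled by }R^{-2}\text{ on }|x|\ge R)$; the Pohozaev-type combination $8\|\nabla u\|_{L^2}^2-4(N+1)\int F(u)$ is, after using the relation $\int|\nabla W|^2=\int F(W)$ and the Pohozaev identity $\frac{N-2}{2}\int|\nabla W|^2=N\int F(W)$ together with \eqref{eq5.100}--\eqref{eq5.13}, bounded below by a positive multiple of $\int|\nabla u(t)|^2\,dx$, which by Corollary \ref{c5.2} is $\gtrsim E(u_0)$. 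The key point is that the compactness of $\overline{K}$ together with $\lambda(t)\ge A_0$ and $|x(t)|\le C_0$ makes the tail of $\int|\nabla u|^2\,dx$ and $\int|u|^{2^*}\,dx$ over $|x|\ge R$ uniformly small for $R$ large, so the $R^{-2}$ and tail error terms are negligible. Hence for $R$ large, $\frac{d}{dt}\mathcal{M}_{\psi_R}[u(t)]\ge c\,E(u_0)>0$ for all $t\ge 0$ unless $E(u_0)=0$; integrating from $0$ to $t$ gives $\mathcal{M}_{\psi_R}[u(t)]\ge c\,E(u_0)\,t+\mathcal{M}_{\psi_R}[u_0]\to+\infty$, contradicting the uniform bound $|\mathcal{M}_{\psi_R}[u(t)]|\le C(R)$. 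Therefore $E(u_0)=0$, and then Corollary \ref{c5.2} forces $\int|\nabla u_0|^2\,dx=0$, i.e. $u_0\equiv 0$.

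The main obstacle, and where the general nonlinearity really enters, is establishing the lower bound $8\|\nabla u(t)\|_{L^2}^2-4(N+1)\int F(u(t))\,dx\ge c\int|\nabla u(t)|^2\,dx$ uniformly in $t$: in the power case this is an exact algebraic identity, while here one must combine condition $(F_6)$, the variational characterization of $W$, the Pohozaev identity for $W$, and the energy-trapping inequalities \eqref{eq5.100}, \eqref{eq5.13} of Theorem \ref{t5.1} to squeeze out a positive constant. A secondary technical point is the uniform smallness of the spatial tails of $u(t)$ in $\dot H^1$ and $L^{2^*}$, which must be extracted from compactness of $\overline{K}$ modulo the scaling $\lambda(t)$ and translation $x(t)$ — this is exactly where the hypotheses $\lambda(t)\ge A_0$ and $|x(t)|\le C_0$ are used, and why in the finite-time case the $|x(t)|\le C_0$ hypothesis can be dropped (there $\lambda(t)\to\infty$ does the localization by itself).
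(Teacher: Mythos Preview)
Your overall plan matches the paper's: split into the finite-time and global cases, and in each run a localized virial/mass argument using compactness of $\overline K$ to force $u_0\equiv 0$. Your treatment of Case~1 ($T_+(u_0)<\infty$) is essentially the paper's argument.

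The gap is in Case~2. You import the computation of $\frac{d}{dt}\mathcal M_{\psi_R}$ from the proof of Lemma~\ref{L4.1}, but those estimates are \emph{upper} bounds (obtained by discarding nonnegative terms via $1-\psi_R''\ge 0$, $N-\Delta\psi_R\ge 0$); they were designed to prove $\frac{d}{dt}\mathcal M_{\psi_R}\le 8E(u_0)$ for blow-up when $E(u_0)<0$. For rigidity you need the opposite inequality, and for that an \emph{exact} localized virial identity is required. This is why the paper proves Lemma~\ref{L9.3} separately: part~ii) gives the equality
\[
z_R''(t)=4\int_{\mathbb{R}^N}\Delta\psi\,|\nabla u|^2\,dx+\tfrac12\int_{\mathbb{R}^N}\nabla|u|^2\cdot\nabla(\Delta\psi)\,dx+\int_{\mathbb{R}^N}\Delta\psi\,[F(u)-f(u)\bar u]\,dx,
\]
so that the leading contribution on $\{|x|\le R\}$ involves \emph{both} $F(u)$ and $f(u)\bar u$. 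The paper then uses \emph{both} coercivity estimates \eqref{eq5.12} and \eqref{eq5.100} (together with the tail smallness \eqref{eq9.3}, \eqref{eq9.4}) to obtain $z_R''\ge C_{\delta_0}\int|\nabla u_0|^2$.

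Your proposed main term $8\|\nabla u\|_{L^2}^2-4(N+1)\int F(u)\,dx$ is not the correct virial combination, and in fact it cannot be bounded below from the available estimates: \eqref{eq5.12} only gives $\int F(u)\le (1-\bar\delta)\int|\nabla u|^2$, whence the coefficient in front of $\int|\nabla u|^2$ becomes $8-4(N+1)(1-\bar\delta)$, which is negative for $N\ge 3$ unless $\bar\delta$ is large (e.g.\ $\bar\delta>1/2$ when $N=3$), and the energy trapping does not guarantee this. Invoking the Pohozaev identity for $W$ or \eqref{eq5.100}, \eqref{eq5.13} does not fix this, since those constrain $f(u)\bar u$ or $E(u)$, not the specific combination you wrote. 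The repair is to replace the Section~4 inequalities by the exact identity of Lemma~\ref{L9.3} and to use \eqref{eq5.12} and \eqref{eq5.100} in tandem, exactly as the paper does.
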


In the next lemma we will collect some useful facts:
\begin{lemma}\label{L9.2}
Let $u, v$ be as in Lemma \ref{L9.1}.

i) Let $\delta_0>0$ be such that $E(u_0) \leq(1-\delta_0) E(W)$. Then for all $t \in$ $\left[0, T_{+}\left(u_0\right)\right)$, we have
$$
\begin{gathered}
\int_{\mathbb{R}^N}|\nabla u(t)|^2dx \leq(1-\delta_1) \int_{\mathbb{R}^N}|\nabla W|^2dx, \\
 \int_{\mathbb{R}^N}(|\nabla u|^2-F(u))dx \geq \bar{\delta} \int_{\mathbb{R}^N}|\nabla u|^2dx,\\
 \int_{\mathbb{R}^N}(|\nabla u|^2-f(u)\overline{u})dx \geq \bar{\delta} \int_{\mathbb{R}^N}|\nabla u|^2dx,\\
C_{1, \delta_0} \int_{\mathbb{R}^N}\left|\nabla u_0\right|^2dx \leq E(u_0) \leq C_2 \int_{\mathbb{R}^N}\left|\nabla u_0\right|^2dx, \\
E(u(t))=E\left(u_0\right), \\
C_{1, \delta_0} \int_{\mathbb{R}^N}\left|\nabla u_0\right|^2dx \leq \int_{\mathbb{R}^N}|\nabla u(t)|^2dx \leq C_2 \int_{\mathbb{R}^N}\left|\nabla u_0\right|^2dx .
\end{gathered}
$$

ii)
$$
\begin{aligned}
& \int_{\mathbb{R}^N}|\nabla v(t)|^2dx \leq C_2 \int_{\mathbb{R}^N}|\nabla W|^2dx, \\
& \|v(t)\|_{L_x^{2^*}}^2 \leq C_3 \int_{\mathbb{R}^N}|\nabla W|^2dx .
\end{aligned}
$$

iii) For all $x_0 \in \mathbb{R}^N$
$$
\int_{\mathbb{R}^N} \frac{|v(x, t)|^2}{\left|x-x_0\right|^2}dx \leq C_4 \int_{\mathbb{R}^N}|\nabla W|^2 dx.
$$

iv) For each $\varepsilon_0>0$, there exists $R(\varepsilon_0)>0$, such that, for $0 \leq t<T_{+}(u_0)$, we have
$$
\int_{|x| \geq R(\varepsilon_0)}\left(|\nabla v|^2dx+F(v)+\frac{|v|^2}{|x|^2}\right)dx \leq \varepsilon_0.
$$
\end{lemma}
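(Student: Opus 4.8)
The plan is to dispose of parts (i)--(iii) quickly, since each is an immediate consequence of material already in place, and to concentrate the actual work on part (iv), where the compactness of $\overline{K}$ enters. For part (i): the four displayed inequalities are precisely \eqref{eq5.11}, \eqref{eq5.12}, \eqref{eq5.100}, \eqref{eq5.13} of the energy trapping Theorem \ref{t5.1}, applied with the given $\delta_0$ (legitimate because the hypotheses $\int|\nabla u_0|^2<\int|\nabla W|^2$ and $E(u_0)\le(1-\delta_0)E(W)$ propagate along the flow by \eqref{eq1.3}); the identity $E(u(t))=E(u_0)$ is the conservation of energy \eqref{eq1.3}; and the two comparability chains are Corollary \ref{c5.2}, read off at $t=0$ for the $E(u_0)\simeq\int|\nabla u_0|^2$ statement and at general $t\in[0,T_{+}(u_0))$ for $\int|\nabla u(t)|^2\simeq\int|\nabla u_0|^2$. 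So (i) needs no new estimate, only the observation that Theorem \ref{t5.1} applies.

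For part (ii): the passage $u(\cdot,t)\mapsto v(\cdot,t)$ is a dilation composed with a translation, both of which preserve the $\dot{H}^1$-norm, so $\int_{\mathbb{R}^N}|\nabla v(t)|^2dx=\int_{\mathbb{R}^N}|\nabla u(t)|^2dx\le C_2\int_{\mathbb{R}^N}|\nabla W|^2dx$ by (i); the $L^{2^*}$ bound is then the Sobolev embedding $\dot{H}^1(\mathbb{R}^N)\hookrightarrow L^{2^*}(\mathbb{R}^N)$ applied to $v(\cdot,t)$, combined with the bound just obtained. For part (iii) I would apply the classical Hardy inequality $\int_{\mathbb{R}^N}|g(x)|^2|x-x_0|^{-2}dx\le\frac{4}{(N-2)^2}\int_{\mathbb{R}^N}|\nabla g|^2dx$ (valid for $N\ge3$ with a constant independent of $x_0$) to $g=v(\cdot,t)$, and then invoke (ii) to bound the right-hand side by $C_4\int_{\mathbb{R}^N}|\nabla W|^2dx$.

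The heart of the lemma is part (iv), which is where the hypothesis that $\overline{K}$ is compact in $H^1$ is indispensable. A precompact subset of $H^1$ is tight: covering $\overline{K}$ by finitely many small $H^1$-balls centered at functions $w_1,\dots,w_m$, approximating each $w_i$ by a compactly supported function, and using the triangle inequality, one finds $R(\varepsilon_0)$ with $\sup_{w\in\overline{K}}\int_{|x|\ge R(\varepsilon_0)}\bigl(|\nabla w|^2+|w|^2\bigr)dx$ as small as desired; since $H^1\hookrightarrow L^{2^*}$ continuously, $\overline{K}$ is also precompact, hence tight, in $L^{2^*}$, so after enlarging $R(\varepsilon_0)$ the tail $\sup_{w\in\overline{K}}\int_{|x|\ge R(\varepsilon_0)}|w|^{2^*}dx$ is small as well. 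Taking $w=v(\cdot,t)\in\overline{K}$, the $|\nabla v|^2$ tail is then controlled directly; the $|v|^2/|x|^2$ tail is handled by $\int_{|x|\ge R}|v|^2|x|^{-2}dx\le R^{-2}\int_{|x|\ge R}|v|^2dx$ together with the $H^1$-boundedness of $\overline{K}$; and the $F(v)$ tail is reduced, via the pointwise estimate $F(z)\le\varepsilon|z|^2+C_\varepsilon|z|^{2^*}$ that follows from $(F_1)$--$(F_2)$ (as in Remark \ref{r1.1} and the proof of Lemma \ref{L5.1}), to the tails of $\int|v|^2$ and $\int|v|^{2^*}$, both already shown to be small. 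Choosing $R(\varepsilon_0)$ so that each of the three contributions is at most $\varepsilon_0/4$ completes the argument. The only subtle point -- and the one I expect to be the main obstacle -- is that the tail bound in (iv) is uniform in $t$: this does not come from any pointwise-in-$t$ estimate but precisely from the precompactness in $H^1$ of the rescaled orbit $\{v(\cdot,t):t\in[0,T_{+}(u_0))\}$; everything else is bookkeeping with tools already developed.
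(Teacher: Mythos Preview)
Your proposal is correct and follows the same route as the paper: the paper's own proof is a single sentence that dispatches (i) and (ii) by citing Theorem \ref{t5.1}, Corollary \ref{c5.2} and Sobolev embedding, handles (iii) by Hardy's inequality, and attributes (iv) to the compactness of $\overline{K}$ together with Sobolev and Hardy---exactly the ingredients you invoke. Your expanded treatment of (iv), in particular the observation that compactness of $\overline{K}$ in $H^1$ transfers to compactness (hence tightness) in $L^{2^*}$ via the continuous embedding, and the reduction of the $F(v)$ tail through $|F(z)|\le\varepsilon|z|^2+C_\varepsilon|z|^{2^*}$, simply spells out what the paper leaves implicit.
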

\begin{proof}
Using Theorem \ref{t5.1}, Corollary \ref{c5.2} and Sobolev embedding, it is easy to see that i) and ii) hold. iii) follows from Hardy's inequality. using Sobolev embedding and the Hardy inequality, follows from the compactness of $ \overline{K}$.
\end{proof}
The next lemma is a localized virial identity about general nonlinear equation. The proof idea comes from Merle \cite{FM1992}.

\begin{lemma}\label{L9.3}
Let $\psi \in C_0^{\infty}(\mathbb{R}^N), t \in\left[0, T_{+}\left(u_0\right)\right)$. Then:

i)
$$
\frac{d}{d t} \int_{\mathbb{R}^N}|u|^2 \psi d x=2 \operatorname{Im} \int_{\mathbb{R}^N} \bar{u} \nabla u \nabla \psi d x
$$

ii)
\begin{equation*}
  \frac{d^2}{d t^2} \int_{\mathbb{R}^N}|u|^2 \psi d x=   4\int_{\mathbb{R}^N}\Delta \psi |\nabla u|^2 d x+\frac{1}{2} \int_{\mathbb{R}^N} \nabla|u|^2 \nabla(\Delta \psi)dx+\int_{\mathbb{R}^N} \Delta \psi [F(u)-f(u)\bar{u}]dx.
\end{equation*}
\end{lemma}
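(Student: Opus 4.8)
The plan is to establish both identities first for smooth solutions with sufficient spatial decay, following the localized-virial approach of Merle \cite{FM1992} indicated above, and then to pass to solutions of the regularity guaranteed by Theorem \ref{t1.1} by the same regularization procedure already used for the conservation laws \eqref{eq1.2}--\eqref{eq1.3} (Propositions 3.3 and 3.4 of \cite{JGGV1979}), together with continuous dependence on the data; since $\psi\in C_0^\infty(\mathbb{R}^N)$, only the local regularity of $u$ and the membership $u\in C([0,T_+(u_0)),H^1)$ are used when passing to the limit, and the growth bounds of Remark \ref{r1.1} control every integrand on the support of $\psi$.

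For i), I would differentiate under the integral sign and substitute the equation in the form $\partial_t u=i(\Delta u+f(u))$ to get
\[
\partial_t|u|^2=2\operatorname{Re}(\bar u\,\partial_t u)=-2\operatorname{Im}\bigl(\bar u(\Delta u+f(u))\bigr).
\]
Assumption $(F_5)$ is exactly what removes the nonlinear piece: writing $f(z)=z\tilde f(|z|)$ with $\tilde f$ real gives $\bar u f(u)=|u|^2\tilde f(|u|)\in\mathbb{R}$, so $\operatorname{Im}(\bar u f(u))=0$. Since $\operatorname{Im}(\bar u\Delta u)=\nabla\cdot\operatorname{Im}(\bar u\nabla u)$, integrating by parts against $\psi$ yields $\frac{d}{dt}\int|u|^2\psi\,dx=2\operatorname{Im}\int\bar u\,\nabla u\cdot\nabla\psi\,dx$.

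For ii), I would differentiate the expression in i) once more in time,
\[
\frac{d^2}{dt^2}\int|u|^2\psi\,dx=2\int\operatorname{Im}\bigl(\partial_t\bar u\,\nabla u+\bar u\,\nabla\partial_t u\bigr)\cdot\nabla\psi\,dx,
\]
and again insert $\partial_t u=i(\Delta u+f(u))$ and $\partial_t\bar u=-i(\Delta\bar u+\overline{f(u)})$. The terms coming from the Laplacian are the classical ones and, after the appropriate integrations by parts, produce the kinetic and bi-Laplacian contributions of the stated identity. The nonlinear terms combine into
\[
2\int\Bigl(\operatorname{Re}\bigl(\bar u\,\nabla f(u)\bigr)-\operatorname{Re}\bigl(\overline{f(u)}\,\nabla u\bigr)\Bigr)\cdot\nabla\psi\,dx,
\]
and the decisive point is the pointwise identity
\[
\operatorname{Re}\bigl(\bar u\,\partial_j f(u)\bigr)-\operatorname{Re}\bigl(\overline{f(u)}\,\partial_j u\bigr)=\partial_j\bigl(f(u)\bar u-F(u)\bigr),
\]
which I would prove by writing $f(u)=u\tilde f(|u|)$, differentiating both $f(u)$ and $|u|^2\tilde f(|u|)$ via the chain rule in terms of $\partial_j|u|$, and invoking $F'(\rho)=2\rho\tilde f(\rho)$, i.e. the relation $f(z)=\partial F(z)/\partial\bar z$ coming from $(F_5)$. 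Integrating by parts then converts this expression into $\int\Delta\psi\,[F(u)-f(u)\bar u]\,dx$, which yields ii).

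The main obstacle is the nonlinear momentum-flux term: for a general $f$ there is no explicit power structure to compute with, so one must genuinely use $(F_5)$ to express everything through the real function $\tilde f$ and the potential $F$, and to recognize $\operatorname{Re}(\bar u\,\nabla f(u))-\operatorname{Re}(\overline{f(u)}\,\nabla u)$ as the gradient of $f(u)\bar u-F(u)$. A secondary point requiring care is justifying the two differentiations under the integral for solutions lying a priori only in $C([0,T_+(u_0)),H^1)\cap L^{\frac{2(N+2)}{N-2}}_{\mathrm{loc}}W^{1,\frac{2N(N+2)}{N^2+4}}$; as noted, this is dispatched by first working with smooth decaying solutions and then a limiting argument along the regularized equations, which is legitimate because $\psi$ has compact support and $(F_1)$--$(F_4)$ with Remark \ref{r1.1} bound the relevant nonlinear integrands locally.
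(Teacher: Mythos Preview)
Your proposal is correct and follows essentially the same approach as the paper: both prove i) by differentiating, substituting $\partial_t u=i(\Delta u+f(u))$, using $(F_5)$ to see $\bar uf(u)\in\mathbb{R}$, and integrating by parts; and both prove ii) by differentiating i) once more and reducing the nonlinear contribution via the structure $f(z)=z\tilde f(|z|)$ and the relation $F'(\rho)=2\rho\tilde f(\rho)$. The only organizational difference is that the paper first rewrites the second derivative as $2\bigl[2\operatorname{Im}\int\nabla\psi\,\overline{\partial_t u}\,\nabla u\,dx-\operatorname{Im}\int\Delta\psi\,\bar u\,\partial_t u\,dx\bigr]$ and treats each piece separately (using $2\operatorname{Re}(f(u)\overline{\nabla u})=\nabla F(u)$ for the first and $\operatorname{Re}(\bar u\Delta u)$ for the second), whereas you keep the two time-derivative terms together and package the nonlinearity into the single gradient identity $\operatorname{Re}(\bar u\,\nabla f(u))-\operatorname{Re}(\overline{f(u)}\,\nabla u)=\nabla(f(u)\bar u-F(u))$; you are also more explicit than the paper about the regularization step.
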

\begin{proof}
By \eqref{eq1.1} and direct calculation, we get
\begin{eqnarray*}
\frac{d}{d t} \int_{\mathbb{R}^N} \psi(x)|u(t, x)|^2 d x & =&2 \int_{\mathbb{R}^N} \operatorname{Re} \psi \frac{\partial u}{\partial t} \bar{u}dx \\
& =&2 \int_{\mathbb{R}^N} \operatorname{Re}\left[i \Delta u+if(u)\right] \bar{u} \psi dx \\
& =&-2 \operatorname{Im} \int_{\mathbb{R}^N} \Delta u \bar{u} \psi dx\\
&=&2 \operatorname{Im} \int_{\mathbb{R}^N} \nabla u \bar{u} \nabla \psi dx
\end{eqnarray*}
because of $\operatorname{Im} \int_{\mathbb{R}^N} \nabla u \overline{\nabla u}   \psi dx=0$ and part (i) follows.

Using (i), we know that
\begin{eqnarray*}
\frac{d^2}{d t^2} \int_{\mathbb{R}^N} \psi(x)|u(t, x)|^2 d x & =&2\left[\operatorname{Im} \int_{\mathbb{R}^N} \nabla \psi \bar{u} \nabla \frac{\partial u}{\partial t}dx+\operatorname{Im} \int_{\mathbb{R}^N} \nabla \psi \frac{\overline{\partial u}}{\partial t} \nabla udx\right] \\
& =&2\left[2 \operatorname{Im} \int_{\mathbb{R}^N} \nabla \psi \frac{\overline{\partial u}}{\partial t} \nabla udx-\operatorname{Im} \int_{\mathbb{R}^N} \Delta \psi \bar{u} \frac{\partial u}{\partial t}dx\right] .
\end{eqnarray*}
On the one hand,
\begin{eqnarray*}
-\operatorname{Im} \int_{\mathbb{R}^N} \Delta \psi \bar{u} \frac{\partial u}{\partial t}dx
& =&-\operatorname{Re} \int_{\mathbb{R}^N} \Delta \psi \bar{u}\left(\Delta u+f(u)\right)dx \\
& =&-\int_{\mathbb{R}^N} \Delta \psi f(u)\bar{u}dx+\int_{\mathbb{R}^N} \Delta \psi|\nabla u|^2dx+\frac{1}{2} \int_{\mathbb{R}^N} \nabla|u|^2 \nabla(\Delta \psi)dx.
\end{eqnarray*}
On the other hand, note that
\begin{eqnarray*}
  -2 \operatorname{Re} \int_{\mathbb{R}^N} \nabla \psi \Delta u \overline{\nabla u}dx &=& 2 \operatorname{Re} \int_{\mathbb{R}^N} \Delta \psi \nabla u \overline{\nabla u}dx+2 \operatorname{Re} \int_{\mathbb{R}^N} \nabla \psi  \overline{\Delta u} \nabla u dx \\
    &=& 2 \operatorname{Re} \int_{\mathbb{R}^N} \Delta \psi \nabla u \overline{\nabla u}dx+2 \operatorname{Re} \int_{\mathbb{R}^N} \nabla \psi   \overline{\Delta u \overline{\nabla u}} dx   \\
    &=& 2 \operatorname{Re} \int_{\mathbb{R}^N} \Delta \psi \nabla u \overline{\nabla u}dx+2 \operatorname{Re} \int_{\mathbb{R}^N} \nabla \psi \Delta u \overline{\nabla u}dx,
\end{eqnarray*}
so
\begin{equation*}
   -2 \operatorname{Re} \int_{\mathbb{R}^N} \nabla \psi \Delta u \overline{\nabla u}dx=  \operatorname{Re} \int_{\mathbb{R}^N} \Delta \psi \nabla u \overline{\nabla u}dx.
\end{equation*}
Therefore,  we have
\begin{eqnarray*}
2 \operatorname{Im} \int_{\mathbb{R}^N} \nabla \psi \frac{\overline{\partial u}}{\partial t} \nabla u dx&=&-2 \operatorname{Im} \int_{\mathbb{R}^N} \nabla \psi \frac{\partial u}{\partial t} \overline{\nabla u}dx\\
&= & -2 \operatorname{Re} \int_{\mathbb{R}^N} \nabla \psi \Delta u \overline{\nabla u}dx-2 \operatorname{Re} \int_{\mathbb{R}^N} \nabla \psi f(u) \overline{\nabla u}dx \\
&= & \operatorname{Re} \int_{\mathbb{R}^N} \Delta \psi \nabla u \overline{\nabla u}dx-2 \operatorname{Re} \int_{\mathbb{R}^N} \nabla \psi f(u) \overline{\nabla u}dx \\
&= &  \int_{\mathbb{R}^N}\Delta \psi |\nabla u|^2 d x + \int_{\mathbb{R}^N} \Delta \psi F(u)dx .
\end{eqnarray*}
Combining the above two estimates, we obtain
 \begin{eqnarray*}
&&\frac{d^2}{d t^2} \int_{\mathbb{R}^N} \psi(x)|u(t, x)|^2 d x \\
& =&2\left[2 \operatorname{Im} \int_{\mathbb{R}^N} \nabla \psi \frac{\overline{\partial u}}{\partial t} \nabla udx-\operatorname{Im} \int_{\mathbb{R}^N} \Delta \psi \bar{u} \frac{\partial u}{\partial t}dx\right]\\
& =&2\left[\int_{\mathbb{R}^N}\Delta \psi |\nabla u|^2 d x + \int_{\mathbb{R}^N} \Delta \psi F(u)dx -\int_{\mathbb{R}^N} \Delta \psi f(u)\bar{u}dx+\int_{\mathbb{R}^N} \Delta \psi|\nabla u|^2dx\right.\\
&&\left.+\frac{1}{2} \int_{\mathbb{R}^N} \nabla|u|^2 \nabla(\Delta \psi)dx\right]\\
& =&4\int_{\mathbb{R}^N}\Delta \psi |\nabla u|^2 d x+\frac{1}{2} \int_{\mathbb{R}^N} \nabla|u|^2 \nabla(\Delta \psi)dx+\int_{\mathbb{R}^N} \Delta \psi [F(u)-f(u)\bar{u}]dx,
\end{eqnarray*}
which implies that the conclusion is valid.
\end{proof}

\begin{proof}[\bf Proof of Lemma \ref{L9.1} ]
The proof splits in two cases, the finite time blow-up case for $u$ and the infinite time of existence for $u$.

\textbf{Case 1 } $T_{+}\left(u_0\right)<\infty$. Note first that $\lambda(t) \rightarrow \infty$ as $t \rightarrow T_{+}\left(u_0\right)$. If not, there exists $t_i \uparrow T_{+}\left(u_0\right)$ such that $\lambda\left(t_i\right) \rightarrow \lambda_0<+\infty$. Let
\begin{equation*}
  v_i(x)=\frac{1}{[\lambda\left(t_i\right)]^{\frac{N-2}{2}}} u\left(\frac{x-x\left(t_i\right)}{\lambda\left(t_i\right)}, t_i\right).
\end{equation*}
By the compactness of $\overline{K}$, there exists $v(x) \in H^1$  such that $v_i \rightarrow v$ in $H^1$. Hence, \begin{equation*}
  u\left(x-\frac{x\left(t_i\right)}{\lambda\left(t_i\right)}, t_i\right)=[\lambda\left(t_i\right)]^{\frac{N-2}{2}} v_i\left([\lambda\left(t_i\right)] x\right) \rightarrow \lambda_0^{\frac{N-2}{2}} v\left(\lambda_0 x\right) \text{ in } H^1
\end{equation*}
because of $\lambda\left(t_i\right) \geq A_0, \lambda_0 \geq A_0$. Let now $h(x, t)$ be the solution of \eqref{eq1.1} with data $\lambda_0^{\frac{N-2}{2}} v\left(\lambda_0 x\right)$ at time $T_{+}\left(u_0\right)$ and
\begin{equation*}
  \|h\|_{L_{\left(T_{+}\left(u_0\right)-\delta, T_{+}\left(u_0\right)+\delta\right)}^\frac{2(N+2)}{N-2}W^{1,{\frac{2N(N+2)}{N^2+4}}}}<+\infty
\end{equation*}
in an interval $\left(T_{+}\left(u_0\right)-\delta, T_{+}\left(u_0\right)+\delta\right)$. Let $h_i(x, t)$ be the solution with data at $T_{+}\left(u_0\right)$ equal to $u\left(x-\frac{x\left(t_i\right)}{\lambda\left(t_i\right)}, t_i\right)$. Then Lemmas \ref{L3.1} and \ref{L3.2} guarantee that
$$
\sup _i\left\|h_i\right\|_{L_{\left(T_{+}\left(u_0\right)-\delta, T_{+}\left(u_0\right)+\delta\right)}^\frac{2(N+2)}{N-2}W^{1,{\frac{2N(N+2)}{N^2+4}}}}<+\infty .
$$
But $u\left(x-\frac{x\left(t_i\right)}{\lambda\left(t_i\right)}, t+t_i-T_{+}\left(u_0\right)\right)=h_i(x, t)$, contradicting finite blow-up since $T_{+}\left(u_0\right)<\infty$.

Let us prove now a decay result for $u$ from the concentration properties in $L^{2^*}$ of $u$ at $T_{+}\left(u_0\right)$. Let us now fix $\varphi \in C_0^{\infty}(\mathbb{R}^N)$, $\varphi$ radial, $\varphi \equiv 1$ for $|x| \leq 1, \varphi \equiv 0$ for $|x| \geq 2$ and set $\varphi_R(x)=\varphi(\frac{x}{R})$. Define
$$
y_R(t)=\int_{\mathbb{R}^N}|u(x, t)|^2 \varphi_R(x) d x, \quad t \in\left[0, T_{+}\left(u_0\right)\right) .
$$
We then have
\begin{equation}\label{eq9.1}
  \left|y_R^{\prime}\right| \leq C_N \int_{\mathbb{R}^N}|\nabla W|^2dx .
\end{equation}
Indeed, it follows Lemma \ref{L9.2} and \ref{L9.3} that
\begin{eqnarray*}
  \left|y_R^{\prime}\right| & \leq& \frac{2}{R}\left|\operatorname{Im} \int_{\mathbb{R}^N} \bar{u} \nabla u \nabla \varphi(\frac{x}{R}) d x\right| \\
& \leq& C_N\left(\int_{\mathbb{R}^N}|\nabla u|^2dx\right)^{\frac{1}{2}} \cdot\left(\int_{\mathbb{R}^N} \frac{|u|^2}{|x|^2}dx\right)^{\frac{1}{2}} \leq C_N \int_{\mathbb{R}^N}|\nabla W|^2dx,
\end{eqnarray*}
We also have:
\begin{equation}\label{eq9.2}
  \text{For all } R>0,  \int_{|x|<R}  |u(x, t)|^2 d x \rightarrow 0 \text{ as } t \rightarrow T_{+}\left(u_0\right).
\end{equation}
In fact, $u(y, t)=[\lambda(t)]^{\frac{N-2}{2}} v(\lambda(t) y+x(t), t)$, so that
\begin{eqnarray*}
&&\int_{|x|<R}|u(x, t)|^2 d x\\
&= & [\lambda(t)]^{-2} \int_{|y|<R \lambda(t)}|v(y+x(t), t)|^2 d y \\
&= & [\lambda(t)]^{-2} \int_{B(x(t), R \lambda(t))}|v(z, t)|^2 d z \\
&= & [\lambda(t)]^{-2} \int_{B(x(t), R \lambda(t)) \cap B(0, \varepsilon R \lambda(t))}|v(z, t)|^2 d z  +[\lambda(t)]^{-2} \int_{B(x(t), R \lambda(t)) \backslash B(0, \varepsilon R \lambda(t))}|v(z, t)|^2 d z \\
&= & A+B .
\end{eqnarray*}
By H\"older inequality$(\frac{2}{2^*}+\frac{2}{N}=1)$,
\begin{equation*}
  A  \leq  [\lambda(t)]^{-2}(\varepsilon R \lambda(t))^{N\cdot\frac{2}{N}}\|v\|_{L^{2^*}}^2 \leq \varepsilon^2 R^2 C_3 \int_{\mathbb{R}^N}|\nabla W|^2dx,
\end{equation*}
which is small with $\varepsilon$. Now, fixed $\varepsilon>0$, note that $\lambda(t) \rightarrow+\infty$ as $t \rightarrow T_{+}(u_0)$, by Lemma \ref{L9.2} and H\"older inequality, we have
\begin{equation*}
  B   \leq [\lambda(t)]^{-2}(R \lambda(t))^{N \frac{2}{N}}\|v\|_{L^{2^*}(|x| \geq \varepsilon R \lambda(t))}^2
  =R^2\|v\|_{L^{2^*}(|x| \geq \varepsilon R \lambda(t))}^2 \rightarrow 0 \text { as } t \rightarrow T_{+}(u_0),
\end{equation*}

According to \eqref{eq9.1} and \eqref{eq9.2}, we have
$$
y_R(0) \leq y_R(T_{+}(u_0))+C_N T_{+}(u_0) \int_{\mathbb{R}^N}|\nabla W|^2dx=C_N T_{+}\left(u_0\right) \int_{\mathbb{R}^N}|\nabla W|^2dx.
$$
Hence, letting $R \rightarrow+\infty$ we obtain $u_0 \in L^2(\mathbb{R}^N)$. Arguing as before,
\begin{equation*}
  \left|y_R(t)-y_R\left(T_{+}\left(u_0\right)\right)\right| \leq C_N\left(T_{+}\left(u_0\right)-t\right) \int_{\mathbb{R}^N}|\nabla W|^2dx,
\end{equation*}
so that
\begin{equation*}
  y_R(t) \leq C_N\left(T_{+}\left(u_0\right)-t\right) \int_{\mathbb{R}^N}|\nabla W|^2dx.
\end{equation*}
Letting $R \rightarrow \infty$, we see that
\begin{equation*}
  \|u(t)\|_{L^2}^2 \leq C_N\left(T_{+}\left(u_0\right)-t\right) \int_{\mathbb{R}^N}|\nabla W|^2dx
\end{equation*}
and so by the conservation of the $L^2$ norm $\left\|u(T_{+}(u_0))\right\|_{L^2}=\left\|u_0\right\|_{L^2}=0$. But then $u \equiv 0$, contradicting $T_{+}(u_0)<\infty$.

\textbf{Case 2 } $T_{+}\left(u_0\right)=+\infty$.
In this case we assume, in addition, that $|x(t)| \leq C_0$. We first note that
for each $\varepsilon>0$, there exists $R(\varepsilon)>0$ such that, for all $t \in[0, \infty)$, it holds
\begin{equation}\label{eq9.3}
  \int_{|x|>R(\varepsilon)}\left(|\nabla u|^2+F(u)+\frac{|u|^2}{|x|^2}\right)dx \leq \varepsilon ,
\end{equation}
\begin{equation}\label{eq9.4}
  \int_{|x|>R(\varepsilon)}\left(|\nabla u|^2+f(u)\overline{u}+\frac{|u|^2}{|x|^2}\right)dx \leq \varepsilon ,
\end{equation}
In fact, $u(y, t)=[\lambda(t)]^{\frac{N-2}{2}} v(\lambda(t) y+x(t), t)$, so that
\begin{eqnarray*}
\int_{|y|>R(\varepsilon)}|\nabla u(y, t)|^2 d y & =&\int_{|y|>R(\varepsilon)} [\lambda(t)]^N|\nabla v(\lambda(t) y+x(t), t)|^2 d y \\
& =&\int_{|z|>R(\varepsilon) \lambda(t)}|\nabla v(z+x(t), t)|^2 d z \\
& \leq& \int_{|z| \geq R(\varepsilon) A_0}|\nabla v(z+x(t), t)|^2 d z \\
& \leq& \int_{|\alpha| \geq R(\varepsilon) A_0-C_0}|\nabla v(\alpha, t)|^2 d \alpha
\end{eqnarray*}
and the statement for this term now follows from Lemma \ref{L9.2} $iv)$. The other terms are handled similarly. There exists $R_0>0$ such that, for all $t \in[0,+\infty)$, we have
\begin{equation}\label{eq9.5}
  8 \int_{|x| \leq R_0}|\nabla u|^2dx-8 \int_{|x| \leq R_0}F(u)dx \geq C_{\delta_0} \int_{\mathbb{R}^N}\left|\nabla u_0\right|^2dx .
\end{equation}
\begin{equation}\label{eq9.6}
  8 \int_{|x| \leq R_0}|\nabla u|^2dx-8 \int_{|x| \leq R_0}f(u)\overline{u}dx \geq C_{\delta_0} \int_{\mathbb{R}^N}\left|\nabla u_0\right|^2dx .
\end{equation}
In fact, \eqref{eq5.12} combined with Lemma \ref{L9.2} i) yields
\begin{equation*}
  8 \int_{\mathbb{R}^N}|\nabla u|^2dx-8 \int_{\mathbb{R}^N}F(u)dx \geq \widetilde{C}_{\delta_0} \int_{\mathbb{R}^N}\left|\nabla u_0\right|^2dx.
\end{equation*}
Now combine this with \eqref{eq9.3}, with $\varepsilon=\varepsilon_0 \int_{\mathbb{R}^N}\left|\nabla u_0\right|^2dx$ to obtain \eqref{eq9.5}. Similarly, it can be proven that \eqref{eq9.6} is established.

To prove Case 2, we choose $\varphi \in C_0^{\infty}(\mathbb{R}^N)$, radial, with $\varphi(x)=|x|^2$ for $|x| \leq 1, \varphi(x) \equiv 0$ for $|x| \geq 2$. Define
\begin{equation*}
  z_R(t)=\int_{\mathbb{R}^N}|u(x, t)|^2 R^2 \varphi(\frac{x}{R}) d x,
\end{equation*}
then we have
\begin{eqnarray*}
 \left|z_R^{\prime}(t)\right| \leq C_{N, \delta_0} \int_{\mathbb{R}^N}\left|\nabla u_0\right|^2 R^2dx\text { for } t>0, \\
 z_R^{\prime \prime} \geq C_{N, \delta_0} \int_{\mathbb{R}^N}\left|\nabla u_0\right|^2dx\text { for } R \text { large enough, } t>0.
\end{eqnarray*}
In fact, from Lemma \ref{L9.3} i),
\begin{eqnarray*}
\left|z_R^{\prime}(t)\right| & \leq& 2 R\left|\operatorname{Im} \int_{\mathbb{R}^N}  \overline{ u} \nabla u \nabla \varphi(\frac{x}{R}) d x\right| \leq C_N R \int_{0 \leq x \mid \leq 2 R} \frac{|x|}{|x|}|\nabla u||u|dx \\
& \leq& C_N R^2\left(\int_{\mathbb{R}^N}|\nabla u|^2dx\right)^{\frac{1}{2}}\left(\int_{\mathbb{R}^N} \frac{|u|^2}{|x|^2}dx\right)^{\frac{1}{2}} \\
&\leq& C_N R^2 \int_{\mathbb{R}^N}\left|\nabla u_0\right|^2dx
\end{eqnarray*}
because of Lemma \ref{L9.2} i). In view of \eqref{eq9.3}, \eqref{eq9.4}, \eqref{eq9.5}, \eqref{eq9.6}  and Lemma \ref{L9.3}, ii), we have
\begin{eqnarray*}
z_R^{\prime \prime}(t)&=& 4\int_{\mathbb{R}^N}\Delta \varphi |\nabla u|^2 d x+\frac{1}{2} \int_{\mathbb{R}^N} \nabla|u|^2 \nabla(\Delta \varphi)dx+\int_{\mathbb{R}^N} \Delta \varphi [F(u)-f(u)\bar{u}]dx \\
&\geq& 4\int_{\mathbb{R}^N}[  |\nabla u|^2-F(u) ]d x +4\int_{\mathbb{R}^N}   [|\nabla u|^2-f(u)\bar{u}]dx \\
& \geq&  4\int_{|x|\leq R}[  |\nabla u|^2-F(u) ]d x +4\int_{|x|\leq R}   [|\nabla u|^2-f(u)\bar{u}]dx   \\
&&-C_N \int_{R \leq|x| \leq 2 R}\left[|\nabla u|^2+\frac{|u|^2}{|x|^2}+|u|^{2^*}\right]dx\\
&\geq& C_{N, \delta_0} \int_{\mathbb{R}^N}\left|\nabla u_0\right|^2dx
\end{eqnarray*}
for $R$ large. If we now integrate in $t$, we have $z_R^{\prime}(t)-z_R^{\prime}(0) \geq C_{N, \delta_0} t \int_{\mathbb{R}^N}\left|\nabla u_0\right|^2dx$, but we also have $\left|z_R^{\prime}(t)-z_R^{\prime}(0)\right| \leq 2 C_N R^2 \int_{\mathbb{R}^N}\left|\nabla u_0\right|^2dx$, a contradiction for $t$ large, unless $\int_{\mathbb{R}^N}\left|\nabla u_0\right|^2dx=0$.
\end{proof}
\begin{proof}[\bf Proof of Theorem \ref{t9.1} ]
Assume that $u_0 \not \equiv 0$ so that $\int_{\mathbb{R}^N}\left|\nabla u_0\right|^2dx>0$ and because of Lemma \ref{L9.2} i) (which is still valid here), $E(u_0) \geq C_{1, \delta_0} \int_{\mathbb{R}^N}\left|\nabla u_0\right|^2dx$ and hence $E(u_0)>0$. Because of Lemma \ref{L9.1}, we only need to treat the case where there exists $\left\{t_n\right\}_{n=1}^{\infty}, t_n \geq 0, t_n \uparrow T_{+}\left(u_0\right)$, so that
$$
\lambda\left(t_n\right) \rightarrow 0 .
$$
(If $t_n \rightarrow t_0 \in\left[0, T_{+}\left(u_0\right)\right.)$, we obtain for all $R>0, \int_{|x| \geq R}\left|v(t_0)\right|^{2 *}dx=0$ but $\left.\int_{\mathbb{R}^N}\left|\nabla v\left(t_0\right)\right|^2dx>0\right)$. After possibly redefining $\left\{t_n\right\}_{n=1}^{\infty}$ we can assume that
$$
\lambda(t_n) \leq 2 \inf_{t \in\left[0, t_n\right]} \lambda(t)
$$
and from our hypothesis
$$
w_n(x)=\frac{1}{\lambda\left(t_n\right)^{\frac{N-2}{2}}} u\left(\frac{x}{\lambda\left(t_n\right)}, t_n\right) \rightarrow w_0 \text { in } H^1 .
$$
By Theorem \ref{t5.1} we have
\begin{equation*}
  E(W)>E(w_0)=E(u_0)>0, \int_{\mathbb{R}^N}|\nabla u(t)|^2dx \leq  (1-\delta_1) \int_{\mathbb{R}^N}|\nabla W|^2dx
\end{equation*}
so that $\int_{\mathbb{R}^N}\left|\nabla w_0\right|^2dx<\int_{\mathbb{R}^N}|\nabla W|^2dx$. Thus $w_0 \not \equiv 0$. Let us now consider solutions of \eqref{eq1.1}, $w_n(x, \tau), w_0(x, \tau)$ with data $w_n(-, 0), w_0(-, 0)$ at $\tau=0$, defined in maximal intervals $\tau \in\left(-T_{-}\left(w_n\right), 0\right], \tau \in\left(-T_{-}\left(w_0\right), 0\right]$ respectively. Since $w_n \rightarrow w_0$ in $H^1$, $\varliminf\limits_{n \rightarrow \infty} T_{-}\left(w_n\right) \geq T_{-}\left(w_0\right)$ and
for each $\tau \in\left(-T_{-}\left(w_0\right), 0\right], w_n(x, \tau) \rightarrow w_0(x, \tau)$ in $H^1$.

Note that by uniqueness in \eqref{eq1.1}, for $0 \leq t_n+\frac{\tau}{[\lambda\left(t_n\right)]^2}$, $w_n(x, \tau)=\frac{1}{[\lambda\left(t_n\right)]^{\frac{N-2}{2}}} u(\frac{x}{[\lambda\left(t_n\right)]}, t_n+\frac{\tau}{[\lambda\left(t_n\right)]^2})$. Remark that $\varliminf\limits_{n \rightarrow \infty}  \tau_n= \varliminf\limits_{n \rightarrow \infty}  t_n [\lambda\left(t_n\right)]^2 \geq T_{-}\left(w_0\right)$ and thus for all $\tau \in\left(-T_{-}\left(w_0\right), 0\right]$ for $n$ large,
$0 \leq t_n+\frac{\tau}{\lambda\left(t_n\right)^2} \leq t_n$. Indeed, if $\tau_n \rightarrow \tau_0<T_{-}\left(w_0\right)$, then $w_n\left(x,-\tau_n\right)=$ $\frac{1}{[\lambda\left(t_n\right)]^{\frac{N-2}{2}}} u_0\left(\frac{x}{\lambda\left(t_n\right)}\right) \rightarrow w_0\left(x,-\tau_0\right)$ in $H^1$ with $\lambda\left(t_n\right) \rightarrow 0$ which is a contradiction from $u_0 \not \equiv 0, w_0 \not \equiv 0$.

Fix now $\tau \in\left(-T_{-}\left(w_0\right), 0\right]$, for $n$ sufficiently large $v\left(x, t_n+\frac{\tau}{\lambda\left(t_n\right)^2}\right)$, $\lambda\left(t_n+\frac{\tau}{ \lambda\left(t_n\right)^2}\right)$ are defined and we have
\begin{eqnarray*}
&& v\left(x, t_n+\frac{\tau}{\lambda\left(t_n\right)^2}\right) \\
& =&\frac{1}{\lambda\left(t_n+\frac{\tau}{\lambda\left(t_n\right)^2}\right)^{\frac{N-2}{2}}} u\left(\frac{x}{\lambda\left(t_n+\frac{\tau}{ \lambda\left(t_n\right)^2}\right)}, t_n+\frac{\tau}{ \lambda\left(t_n\right)^2}\right) \\
& =&\frac{1}{\widetilde{\lambda}_n(\tau)^{\frac{N-2}{2}}} w_n\left(\frac{x}{\widetilde{\lambda}_n(\tau)}, \tau\right),
\end{eqnarray*}
where
$$
\widetilde{\lambda}_n(\tau)=\frac{\lambda\left(t_n+\frac{\tau}{ \lambda\left(t_n\right)^2}\right)}{\lambda\left(t_n\right)} \geq \frac{1}{2}
$$
because of the fact $\lambda(t_n) \leq 2 \inf\limits_{t \in\left[0, t_n\right]} \lambda(t))$. One can assume after passing to a subsequence that $\widetilde{\lambda}_n\left(t_n+\frac{\tau}{ \lambda\left(t_n\right)^2}\right) \rightarrow \widetilde{\lambda}_0(\tau)$ with $\frac{1}{2} \leq \widetilde{\lambda}_0(\tau) \leq+\infty$ and $v\left(x, t_n+\frac{\tau}{ \lambda\left(t_n\right)^2}\right) \rightarrow v_0(x, \tau)$ in $H^1$, as $n \rightarrow \infty$. Remark that $\widetilde{\lambda}_0(\tau)< +\infty$. If not, we will have
\begin{equation*}
  \frac{1}{\widetilde{\lambda}_n(\tau)^{\frac{N-2}{2}}} w_0\left(\frac{x}{\widetilde{\lambda}_n(\tau)}, \tau\right) \rightarrow v_0(x, \tau),
\end{equation*}
which implies $w_0(x, \tau)=0$ which contradicts $E\left(w_0\right)=E\left(u_0\right)>0$. Thus $\widetilde{\lambda}_0(\tau)<+\infty$ and $v_0(x, \tau)=\frac{1}{\widetilde{\lambda}_0(\tau)^{\frac{N-2}{2}}} w_0\left(\frac{x}{\widetilde{\lambda}_0(\tau)}, \tau\right)$ where $v_0(\tau) \in \overline{K}$. We thus obtain a contradiction from Lemma \ref{L9.1}.
\end{proof}
\begin{corollary}\label{c9.1}
Assume that $E\left(u_0\right)<E(W), \int_{\mathbb{R}^N}\left|\nabla u_0\right|^2dx<\int_{\mathbb{R}^N}|\nabla W|^2dx$ and $u_0$ is radial. Then the solution $u$ of the Cauchy problem \eqref{eq1.1} with data $u_0$ at $t=0$ has time interval of existence $I=(-\infty,+\infty),\|u\|_{L_I^\frac{2(N+2)}{N-2}W^{1,{\frac{2N(N+2)}{N^2+4}}}}<+\infty$ and there exists $u_{0,+}, u_{0,-}$ in $H^1$ such that
$$
\lim\limits_{t \rightarrow+\infty}\left\|u(t)-e^{i t \Delta} u_{0,+}\right\|_{H^1}=0, \  \lim\limits_{t \rightarrow-\infty}\left\|u(t)-e^{i t \Delta} u_{0,-}\right\|_{H^1}=0 .
$$
Moreover, if we define $\delta_0$ so that $E(u_0) \leq(1-\delta_0) E(W)$, there exists a function $M(\delta_0)$ so that
$$
\|u\|_{L_I^\frac{2(N+2)}{N-2}W^{1,{\frac{2N(N+2)}{N^2+4}}}} \leq M\left(\delta_0\right) .
$$
\end{corollary}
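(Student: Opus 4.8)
The plan is to derive Corollary \ref{c9.1} from the concentration-compactness/rigidity package built in Sections 7--9, following the Kenig--Merle scheme. The decisive first step is to show that the critical energy $E_C$ fixed in Section 8 actually equals $E(W)$. I would argue by contradiction: if $E_C<E(W)$, then Lemma \ref{L5.2} together with Remark \ref{r8.1} yields a \emph{radial} datum $u_{0,C}$ with $E(u_{0,C})=E_C$ and $\int_{\mathbb{R}^N}|\nabla u_{0,C}|^2\,dx<\int_{\mathbb{R}^N}|\nabla W|^2\,dx$ whose solution $u_C$ has $\|u_C\|_{L_I^{\frac{2(N+2)}{N-2}}W^{1,\frac{2N(N+2)}{N^2+4}}}=+\infty$ on its maximal interval; by Lemma \ref{L5.3} and Remark \ref{r8.1} the rescaled orbit $K=\{\lambda(t)^{-\frac{N-2}{2}}u_C(x/\lambda(t),t)\}$ has compact closure in $H^1$, with no spatial translation. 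This is precisely the hypothesis of the rigidity Theorem \ref{t9.1}, which forces $u_{0,C}\equiv 0$, hence $E_C=0<\eta_0$, a contradiction. Thus $E_C=E(W)$.

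With $E_C=E(W)$ in hand, global existence and finiteness of the spacetime norm are immediate: any $u_0$ as in the statement satisfies $E(u_0)<E(W)=E_C$, so $(SC)(u_0)$ holds by the very definition of $E_C$, i.e. $I=(-\infty,+\infty)$ and $\|u\|_{L_I^{\frac{2(N+2)}{N-2}}W^{1,\frac{2N(N+2)}{N^2+4}}}<\infty$. For the scattering I would argue as in Remark \ref{r4.1}: finiteness of the global spacetime norm together with Remark \ref{r1.1} and the Strichartz estimates of Lemma \ref{L2.1} shows that $f(u)\in L_{\mathbb{R}}^2L^{\frac{2N}{N+2}}\cap L_{\mathbb{R}}^\infty H^1$ on each subinterval and that $\left\|\int_t^{+\infty}e^{i(t-t')\Delta}f(u)\,dt'\right\|_{H^1}\to 0$ as $t\to+\infty$; then $u_{0,+}:=e^{-ia\Delta}u(a)+i\int_a^{+\infty}e^{-it'\Delta}f(u)\,dt'$ satisfies $\|u(t)-e^{it\Delta}u_{0,+}\|_{H^1}\to 0$, and $u_{0,-}$ is produced identically as $t\to-\infty$ (cf. Remark \ref{r1.5}).

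It remains to produce the uniform bound $M(\delta_0)$, and here I would argue by contradiction once more. If no such bound existed, there would be radial data $u_{0,n}$ with $E(u_{0,n})\le(1-\delta_0)E(W)$, $\int_{\mathbb{R}^N}|\nabla u_{0,n}|^2\,dx<\int_{\mathbb{R}^N}|\nabla W|^2\,dx$, and $\|u_n\|_{L_{\mathbb{R}}^{\frac{2(N+2)}{N-2}}W^{1,\frac{2N(N+2)}{N^2+4}}}\to\infty$. If $\|e^{it\Delta}u_{0,n}\|_{L_{t,x}^{\frac{2(N+2)}{N-2}}}<\rho$ along a subsequence, Lemma \ref{L3.1} already bounds the norm, a contradiction; otherwise we apply the profile decomposition of Lemma \ref{L5.4} (with $x_{j,n}\equiv 0$ by Remark \ref{r8.1}). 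By Theorem \ref{t5.1} and Corollary \ref{c5.1}, for $n$ large each profile satisfies $\int_{\mathbb{R}^N}|\nabla V_{0,j}|^2\,dx<\int_{\mathbb{R}^N}|\nabla W|^2\,dx$ and $0\le E(V_j^l(-t_{j,n}/\lambda_{j,n}^2))\le E(u_{0,n})+o(1)<E(W)=E_C$, so every nonlinear profile $U_j$ obeys $(SC)$; moreover, as in Step 3 of the proof of Lemma \ref{L5.5}, for $j$ large $\|U_j\|_{L_{\mathbb{R}}^{\frac{2(N+2)}{N-2}}W^{1,\frac{2N(N+2)}{N^2+4}}}^{\frac{2(N+2)}{N-2}}\le C\|V_{0,j}\|_{H^1}^{\frac{2(N+2)}{N-2}}$, and the series converges because of \eqref{eq5.16}. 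Feeding $\tilde u=\sum_j U_j(\cdot)$ and the error term into Proposition \ref{P5.1} as in Steps 4--5 of the proof of Lemma \ref{L5.5} then bounds $\|u_n\|_{L_{\mathbb{R}}^{\frac{2(N+2)}{N-2}}W^{1,\frac{2N(N+2)}{N^2+4}}}$ by a constant depending only on $\delta_0$ and $N$, contradicting the divergence; this constant is the desired $M(\delta_0)$.

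I expect the main obstacle to be this last step: although it only recycles Lemmas \ref{L5.4} and \ref{L5.5} and Proposition \ref{P5.1}, one must track carefully that the energy gap $\delta_0$ (rather than criticality at $E_C$) controls both the number of profiles kept and the size of the perturbative error, and that the general nonlinearity $f$ does not degrade the estimates underlying Lemma \ref{L5.5}. The rigidity input of the first step is the other conceptual hurdle, but it is entirely supplied by Theorem \ref{t9.1} and Remark \ref{r8.1}.
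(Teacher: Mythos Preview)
Your proposal is correct and follows essentially the same route as the paper: first use Lemmas \ref{L5.2}--\ref{L5.3} (in their radial form via Remark \ref{r8.1}) together with the rigidity Theorem \ref{t9.1} to force $E_C=E(W)$; then invoke the definition of $E_C$ for global existence and finiteness of the Strichartz norm, Remark \ref{r4.1} for scattering, and a contradiction argument through the profile decomposition and Proposition \ref{P5.1} for the uniform bound $M(\delta_0)$. The only mild difference is in the last step: the paper's sketch asserts ``$J=1$ as in Lemma \ref{L5.2}'' and then notes $\|U_1\|<\infty$, whereas you (more carefully) observe that here $E(u_{0,n})\le(1-\delta_0)E(W)<E(W)=E_C$ places every profile strictly below $E_C$, so the correct path is the multi-profile case \eqref{eq8.1} of Lemma \ref{L5.5}, whose Steps 3--5 feed into Proposition \ref{P5.1} to produce a uniform bound depending only on $\delta_0$; this is the same mechanism and yields the same conclusion.
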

\begin{proof}
By the integral equation in Lemma \ref{L3.1}, we know that $u(t)$ is radial for each $t \in I$. Using Remark \ref{r8.1} and Theorem \ref{t9.1} we obtain $I=(-\infty,+\infty),\|u\|_{L_I^\frac{2(N+2)}{N-2}W^{1,{\frac{2N(N+2)}{N^2+4}}}}<+\infty$. Now Remark \ref{r4.1} finishes the proof of the first statement.

For the last statement, let
$$
\begin{gathered}
D_{\delta_0}=\left\{u_0 \in H^1 \text { radial, } \int_{\mathbb{R}^N}\left|\nabla u_0\right|^2dx<\int_{\mathbb{R}^N}|\nabla W|^2dx \text { and } E\left(u_0\right) \leq\left(1-\delta_0\right) E(W)\right\} \\
M\left(\delta_0\right)=\sup _{u \in D_{\delta_0}}\|u\|_{L_I^\frac{2(N+2)}{N-2}W^{1,{\frac{2N(N+2)}{N^2+4}}}} .
\end{gathered}
$$
We need to prove $M(\delta_0)<+\infty$. If not there is a sequence $u_{0, n}$ in $D_{\delta_0}$ and the corresponding solutions $u_n$ such that $\left\|u_n\right\|_{L_I^\frac{2(N+2)}{N-2}W^{1,{\frac{2N(N+2)}{N^2+4}}}} \rightarrow+\infty$ as $n \rightarrow+\infty$. Note that we can assume that $\left\|e^{i t \Delta} u_{0, n}\right\|_{L_I^\frac{2(N+2)}{N-2}W^{1,{\frac{2N(N+2)}{N^2+4}}}} \geq \rho$, with $\rho$ as in Lemma \ref{L3.1}. Arguing as in the proof of Lemma \ref{L5.2}, we would conclude that first $J=1$ in the decomposition given in Lemma \ref{L5.4} and then since $\left\|U_1\right\|_{L_I^\frac{2(N+2)}{N-2}W^{1,{\frac{2N(N+2)}{N^2+4}}}}<+\infty$ we reach a contradiction.
\end{proof}

\end{document}